\documentclass[11pt]{article}
\usepackage{amsmath, amssymb, amsfonts, amsthm,latexsym}
\usepackage{graphicx}

\oddsidemargin  0pt     %   Left margin on odd-numbered pages.
\evensidemargin 0pt     %   Left margin on even-numbered pages.
\marginparwidth 40pt    %   Width of marginal notes.
\marginparsep 10pt      % Horizontal space between outer margin and
                        % marginal note

% VERTICAL SPACING:
\topmargin 0pt           % Nominal distance from top of page to top of
                         %    box containing running head.
\headsep 10pt            %    Space between running head and text.

% DIMENSION OF TEXT:

\textheight 8.5in        %Height of text(including footnotes and figures,
                         % excluding running head and foot).
\textwidth 6.6in         % Width of text line.

\topmargin 0pt \headsep 0pt

\bibliographystyle{plain}
\newtheorem*{urem}{Remark}

\newtheorem{theorem}{Theorem}[section]
\newtheorem{claim}[theorem]{Claim}
\newtheorem{rem}[theorem]{Remark}
\newtheorem{lemma}[theorem]{Lemma}
\newtheorem{propos}[theorem]{Proposition}
\newtheorem{cor}[theorem]{Corollary}

\newcommand{\eps}{\varepsilon}
\DeclareMathOperator{\sgn}{sgn}
\newcommand{\M}{\mathcal{M}}

\title{A differential version of the Chebyshev-Markov-Stieltjes inequalities}
\author{Shoni Gilboa\thanks{Mathematics Dept., The Open University of Israel, Raanana 43107, Israel, Email: tipshoni@gmail.com.}
\and Ron Peled\thanks{Mathematics Dept., Tel-Aviv University,
Tel-Aviv 69978, Israel, Email: peledron@post.tau.ac.il. Supported by
an ISF grant and an IRG grant.}}

\begin{document}
\maketitle
\begin{abstract}
We show that a differential version of the classical
Chebyshev-Markov-Stieltjes inequalities holds for a broad family of
weight functions. Such a differential version appears to be new. Our
results apply to weight functions which are bounded away from zero
and piecewise absolutely continuous and yield effective estimates
when the weight satisfies additional regularity conditions.
\end{abstract}

\section{Introduction}

Let $w$ be a non-negative, integrable function on the interval
$[-1,1]$, with non-zero integral. Let $n\ge 1$ and let $\M_n$ be the
class of all positive Borel measures $\mu$ on $[-1,1]$ satisfying
that
\begin{equation*}
  \int_{-1}^1 p(t) w(t)dt = \int p(t) d\mu(t)
\end{equation*}
for all polynomials $p$ of degree at most $n$. Define the extremal
functions
\begin{equation}\label{eq:pi_underline_pi_lambda_def}
  \pi(x) := \sup_{\mu\in \M_n} \mu([-1,x]),\quad \underline{\pi}(x) := \inf_{\mu\in \M_n}
  \mu([-1,x)),\quad \lambda(x) := \sup_{\mu\in \M_n} \mu(\{x\}).
\end{equation}
Figures~\ref{fig:pi_integral_and_pi_underline} and \ref{fig:lambda}
depict these functions for a specific choice of weight function.
Trivially,
\begin{equation}\label{eq:Chebyshev_Markov_Stieltjes_intro}
\underline{\pi}(x)\leq\int_{-1}^{x}w(t)dt\leq\pi(x),\quad -1\le x\le
1.
\end{equation}
Investigations going back to the work of Chebyshev, Markov and
Stieltjes have shown that for each $-1\le x\le 1$ there is a unique
measure simultaneously attaining the suprema and infimum in
\eqref{eq:pi_underline_pi_lambda_def}. These extremal measures form
an explicit one-parameter family of atomic measures in $\M_n$,
termed the \emph{canonical representations}. In
Section~\ref{sec:background} we review the theory of canonical
representations. The explicit identification of the extremal
measures makes the inequalities
\eqref{eq:Chebyshev_Markov_Stieltjes_intro} into a powerful tool,
termed the Chebyshev-Markov-Stieltjes inequalities. One consequence
of the fact that the extrema in
\eqref{eq:pi_underline_pi_lambda_def} are attained on the same
measure is the following relation between the extremal functions,
\begin{equation*}
  \pi(x)-\underline{\pi}(x)=\lambda(x).
\end{equation*}
This implies, together with
\eqref{eq:Chebyshev_Markov_Stieltjes_intro}, that
\begin{equation}\label{eq:Chebyshev_Markov_Stieltjes_with_lambda_intro}
\left|\pi(x) - \int_{-1}^{x}w(t)dt\right|\le\lambda(x)\quad\text{
and }\quad \left|\underline{\pi}(x) -
\int_{-1}^{x}w(t)dt\right|\leq\lambda(x),\quad -1\le x\le 1.
\end{equation}
In this paper we show that the inequalities
\eqref{eq:Chebyshev_Markov_Stieltjes_with_lambda_intro} may be valid
also in a pointwise rather than cumulative sense. We focus on the
class of weight functions which are bounded away from zero and
satisfy certain regularity conditions.

Observe that $\pi$ is a non-decreasing function and thus
differentiable almost everywhere on $(-1,1)$. In fact, more is true,
the function $\pi$ is analytic at all but finitely many points of
$(-1,1)$, where the exceptional points are roots of the principal
representations, as elaborated in Section~\ref{sec:background}.

\begin{theorem}\label{thm:Lipschitz}
Suppose $w$ is a function on $[-1,1]$ satisfying that for some
$m,R>0$:
\begin{itemize}
\item  $w(x)\geq m$ for every $-1\leq x\leq 1$.
\item  $|w(x)-w(y)|\leq R\cdot(y-x)$ for every $-1\leq x<y\leq 1$.
\end{itemize}
Then there exists a constant $C(w)>0$ such that for every
differentiability point $x\in(-1,1)$ of $\pi$,
\begin{equation}\label{eq:required_pi_minus_estimate1}
-C(w)\frac{\lambda(x)}{1-x}\le \pi'(x)-w(x)\leq
C(w)\lambda(x)\min\left\{\frac{1}{1+x},n^2\right\}.
\end{equation}
\end{theorem}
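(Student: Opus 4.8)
The plan is to combine a differentiation of the moment identities with the explicit description of the canonical representations recalled in Section~\ref{sec:background}. On each of the finitely many open subintervals of $(-1,1)$ on which $\pi$ is analytic, fix a point $x$ and write the associated canonical representation as $\sigma_x=\sum_{i=1}^{r}\rho_i(x)\,\delta_{t_i(x)}$ with $t_1(x)<\dots<t_r(x)$, where $r$ and $k$ are constant on the interval, the $t_i$ and $\rho_i$ depend analytically on $x$, the node $t_k(x)$ equals $x$, and $\pi(x)=\sum_{i\le k}\rho_i(x)$ (so that $\lambda(x)=\rho_k(x)$, $\underline\pi(x)=\sum_{i<k}\rho_i(x)$ and $t_k'(x)=1$). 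Differentiating $\sum_i\rho_i(x)\,p(t_i(x))=\int_{-1}^{1}pw$, which holds for all $p$ with $\deg p\le n$, gives $\sum_i\bigl(\rho_i'(x)\,p(t_i(x))+\rho_i(x)\,t_i'(x)\,p'(t_i(x))\bigr)=0$. Taking $p=q_x$, any polynomial of degree $\le n$ with $q_x(t_i(x))=1$ for $i\le k$ and $q_x(t_i(x))=0$ for $i>k$, collapses the first sum to $\pi'(x)$, so $\pi'(x)=-\sum_i\rho_i(x)\,t_i'(x)\,q_x'(t_i(x))$; since $\deg q_x'\le n-1$ the moment identities also give $\int_{-1}^{1}q_x'w=\sum_i\rho_i(x)\,q_x'(t_i(x))$, and subtracting (using $t_k'(x)=1$) yields the key decomposition
\begin{equation}\label{eq:plan-decomp}
\pi'(x)-w(x)=-\Bigl(\int_{-1}^{1}q_x'(t)\,w(t)\,dt+w(x)\Bigr)\;-\;\sum_{i\ne k}\rho_i(x)\,q_x'(t_i(x))\,\bigl(t_i'(x)-1\bigr),
\end{equation}
valid for every admissible $q_x$.

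The next step is to choose $q_x$, using the $n+1-r$ parameters left free by the interpolation conditions, so that both terms on the right of \eqref{eq:plan-decomp} are small. I would (a) impose $q_x'(t_i)=0$ at as many nodes $t_i$ with $i\ne k$ as the degree budget allows, so that only a bounded number of terms survive in the last sum of \eqref{eq:plan-decomp} (those can be taken to be the nodes at $\pm1$ if $\sigma_x$ has them, or the nodes closest to them), and (b) normalise so that $q_x(-1)-q_x(1)=1$, which is possible since whenever an endpoint is a node of $\sigma_x$ the value of $q_x$ there is already fixed by the interpolation data. With this normalisation $\int_{-1}^{1}q_x'=-1$, hence the first term of \eqref{eq:plan-decomp} equals $-\int_{-1}^{1}q_x'(t)\bigl(w(t)-w(x)\bigr)\,dt$, whose modulus is at most $R\int_{-1}^{1}|q_x'(t)|\,|t-x|\,dt$ by the Lipschitz hypothesis. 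This reduces the whole problem to quantitative estimates on the canonical quadrature near $x$: the concentration $\int|q_x'(t)|\,|t-x|\,dt$ of a well-chosen interpolant, the values $|q_x'(t_i)|$ at the surviving nodes, and the velocities $|t_i'(x)-1|$ weighted by $\rho_i(x)$. I would obtain all of these from the formulas of Section~\ref{sec:background} — the nodes of $\sigma_x$ being the zeros of a quasi-orthogonal polynomial with $x$ among its roots and the $\rho_i$ the associated Christoffel numbers, with $t_i'(x)$ read off by differentiating the zero relation — together with the classical two-sided bounds on the Christoffel function $1/K_m(x,x)$ (with $m$ of order $n$) and on the spacing of orthogonal-polynomial zeros, which hold because $w\ge m$ and, being Lipschitz, $w$ is also bounded above. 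Each quantity should come out bounded by a constant multiple of $\lambda(x)$ times the appropriate power of $1/(1\pm x)$.

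The asymmetry in \eqref{eq:required_pi_minus_estimate1} reflects which endpoint is a node of $\sigma_x$. When $x$ lies in the range where $\pi$ increases quickly, the mass displaced as $x$ grows accumulates near $-1$, so the transition of $q_x$ from $1$ to $0$ is forced near the left endpoint, at distance of order $1+x$ from $x$, producing the factor $1/(1+x)$ in the upper estimate; when $x$ lies in the range where $\pi$ increases slowly, the displaced mass escapes towards $+1$, at distance of order $1-x$, producing the $1/(1-x)$ in the lower estimate. The alternative bound $n^2$ is a Markov-type phenomenon: once $1+x$ is of order $1/n^2$ the point $x$ is inside the boundary layer at $-1$, on which the Christoffel function — and with it $\pi$ — varies only on that scale, so $\pi'(x)-w(x)$ is at most a constant times $n^2\lambda(x)$ there. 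I expect the principal difficulty to be entirely in carrying out these quadrature estimates uniformly — converting the two hypotheses on $w$ into bounds on the nodes, weights and node velocities of $\sigma_x$, and on the derivatives of the interpolants $q_x$, uniform in $x\in(-1,1)$, in $n$, and over the analyticity intervals, and in particular controlling $q_x$ near the endpoints, where the nodes crowd into a layer of width of order $1/n^2$. At the finitely many non-analyticity points of $\pi$, if these happen to be differentiability points, the bound follows from its validity on the two neighbouring intervals by continuity of $\lambda$ together with the mean value theorem.
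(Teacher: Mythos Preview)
Your decomposition \eqref{eq:plan-decomp} is morally the right starting point and, once the dust settles, reproduces the paper's key identity (Lemma~\ref{deriv-shift}). But there is a genuine gap: you take $\deg q_x\le n$, saying the quadrature identity ``holds for all $p$ with $\deg p\le n$''. In fact $\Sigma_x$ is exact to degree $2n-1$, and since it has $r\in\{n,n+1\}$ nodes, your ``$n+1-r$ free parameters'' is $0$ or $1$. With that budget step~(a) is vacuous: you cannot force $q_x'(t_i)=0$ at more than one node, so essentially \emph{all} terms of the sum survive, each carrying a node velocity $t_i'(x)$ that you would then have to estimate uniformly in $i$, $x$ and $n$ --- a substantial task you have not addressed.

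If you correct the degree to $\deg q_x\le 2n-1$ you get $2n-r\in\{n-1,n\}$ free parameters, exactly enough to kill $q_x'$ at every \emph{interior} node $\neq x$; the only survivors are the endpoint nodes (where $t_i'\equiv 0$, so $t_i'-1=-1$), and \eqref{eq:plan-decomp} becomes precisely
\[
\pi'(x)=\lambda_x(-1)q_x'(-1)+\lambda_x(1)q_x'(1)-\int_{-1}^1 q_x'(t)w(t)\,dt,
\]
which is the paper's Lemma~\ref{deriv-shift}. However, in the generic (canonical) case this uses up all the freedom, so step~(b) --- imposing $q_x(-1)-q_x(1)=1$ --- is \emph{not} available: $q_x(1)$ (or $q_x(-1)$) is determined and is a small positive number, not zero. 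Consequently your clean bound $R\int|q_x'(t)|\,|t-x|\,dt$ does not apply directly. The paper instead integrates $\int q_x' w$ by parts, uses the sandwich $0\le q_x-\chi_{[-1,x]}\le p_x-\underline p_x$ (their $q_x$, not yours), and exploits that $\int (p_x-\underline p_x)\,w=\lambda(x)$ together with $|w'|\le R\le \frac{R}{m}w$ to get the $\frac{R}{m}\lambda(x)$ contribution in one line. Your proposed estimate of $\int|q_x'(t)|\,|t-x|\,dt$ would require pointwise or $L^1$ control of $q_x'$ that is noticeably harder to obtain. The boundary terms $\lambda_x(\pm1)q_x'(\pm1)$ and the residual $q_x(\pm1)$ are then handled via the orthogonal-polynomial bounds of Sections~\ref{sec:preliminary_estimates}--\ref{sec:p_a_derivative}, in the spirit you sketch.
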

This inequality appears to be new. One motivation for its
development came from papers of Kuijlaars
\cite{Kuijlaars1,Kuijlaars2} where a lower bound for $\pi'$ for the
case of Jacobi weight functions played a central role.
Figure~\ref{fig:pi_derivative} presents a plot of the function $\pi'
- w$. The constant $C(w)$ appearing in the inequality may be given
an explicit estimate depending only on $m$ and $R$, yielding
uniformity of our estimates for the class of weight functions
satisfying the assumptions of the theorem, see the discussion in
Section~\ref{sec:open_problems}.

It is worth noting that when $w$ is bounded above, the function
$\lambda(x)$ cannot be too large. Specifically, it satisfies
\begin{equation*}
\lambda(x)\leq \frac{C
M}{n}\max\left\{\sqrt{1-x^2},\frac{1}{n}\right\},\quad -1<x<1,
\end{equation*}
where $C>0$ is an absolute constant and $M$ is the maximum of $w$ on
$[-1,1]$ (see Lemma~\ref{lambda} below). Thus the above theorem may
also be seen as a practical tool for estimating the density $w$ if
one is given only the first few moments of the measure $w(t)dt$.
With regards to this we mention that Lemma~\ref{deriv-shift} gives
an explicit expression for $\pi'$.

We also remark that in Theorem~\ref{thm:Lipschitz}, as well as the
next two theorems, one immediately obtains corresponding bounds with
$\underline{\pi}$ replacing $\pi$ by considering the reversed weight
function $\tilde{w}(x) := w(-x)$, as we have the identity
$\pi^{\tilde{w}}(x) = \int_{-1}^1 w(t)dt - \underline{\pi}^w(-x)$.

The next two theorems provide estimates for the difference $\pi' -
w$ under weaker regularity assumptions on $w$. In the first of these
the assumption of Lipschitz continuity is relaxed to a Sobolev
condition. In the second, the regularity assumptions on $w$ are
relaxed to mere piecewise absolute continuity, thus allowing a class
of discontinuous weight functions, at the price of making the result
non-quantitative.

\begin{figure}[t]
\centering
{\includegraphics[width=0.7\textwidth]{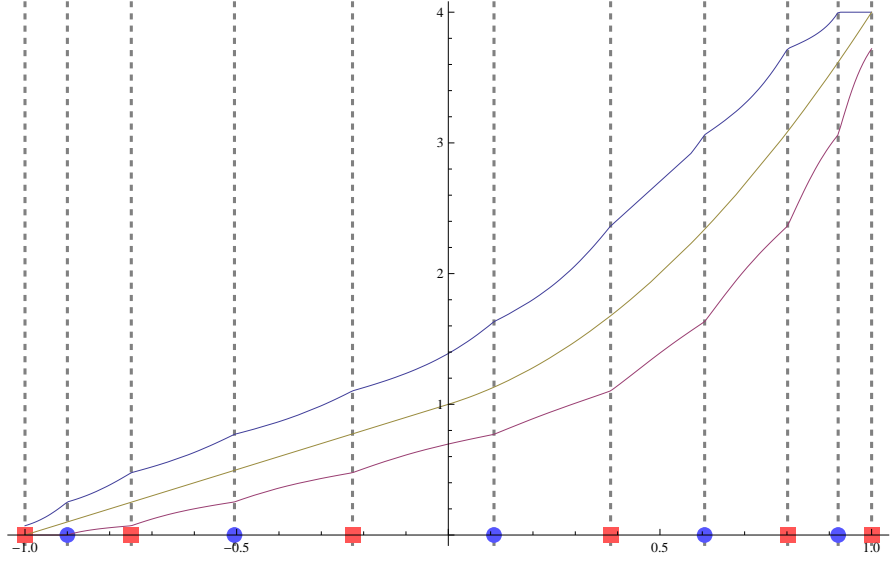}}
{\caption{A plot of $\pi$ (top graph), $\int_{-1}^x w(t)dt$ (middle
graph) and $\underline{\pi}$ (bottom graph) for $n=5$ and the weight
function $w(t)=\max\{1,1+4t\}$. The circles on the axis denote the
nodes of the Gaussian quadrature and the squares denote the nodes of
the Lobatto quadrature. These quadratures are defined in
Section~\ref{sec:background}, where the fact that $\pi$ is constant
to the right of the last Gaussian node is also
explained.\label{fig:pi_integral_and_pi_underline}}}
\end{figure}

\begin{figure}[t]
\centering
{\includegraphics[width=0.7\textwidth]{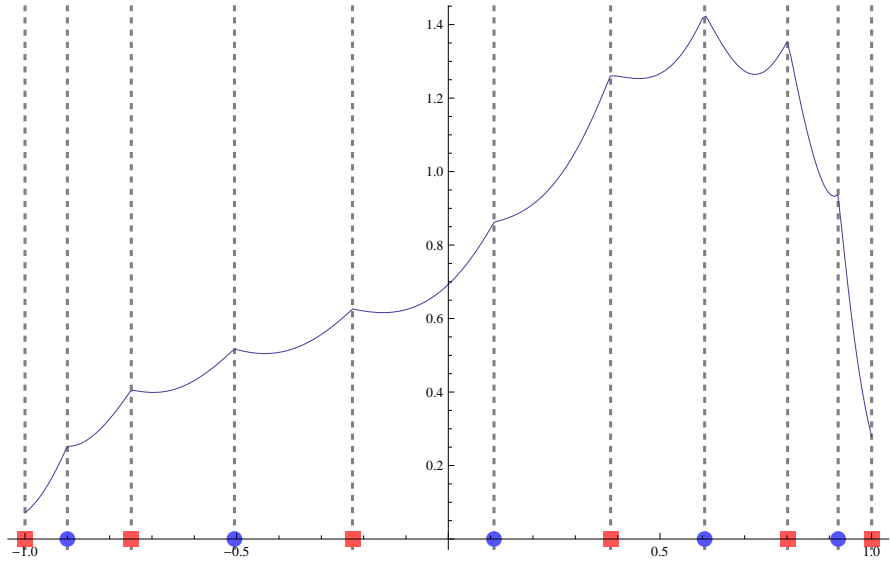}}
{\caption{A plot of the function $\lambda$ for $n=5$ and the weight
function $w(t)=\max\{1,1+4t\}$. The circles on the axis denote the
nodes of the Gaussian quadrature and the squares denote the nodes of
the Lobatto quadrature as defined in Section~\ref{sec:background}.
\label{fig:lambda}}}
\end{figure}

\begin{figure}[t]
\centering
{\includegraphics[width=0.7\textwidth]{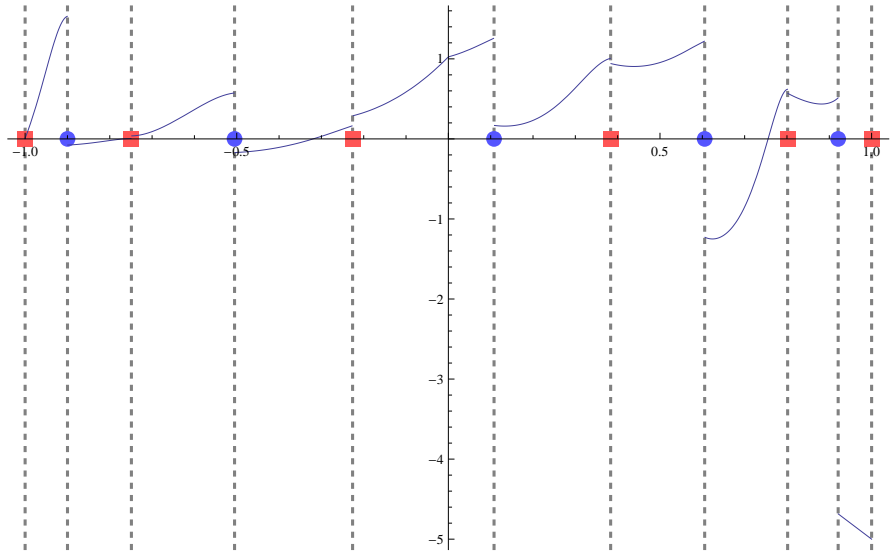}}
{\caption{A plot of the function $\pi' - w$ for $n=5$ and the weight
function $w(t)=\max\{1,1+4t\}$. The circles on the axis denote the
nodes of the Gaussian quadrature and the squares denote the nodes of
the Lobatto quadrature as defined in Section~\ref{sec:background}.
The lack of smoothness at $t=0$ is due to the lack of smoothness of
$w$ at this point as we know that $\pi$ is analytic there, see
Section~\ref{sec:background}. \label{fig:pi_derivative}}}
\end{figure}

\begin{theorem}\label{thm:abs_cont}
Suppose $w$ is a function on $[-1,1]$ satisfying that for some $m>0$
and $p>1$:
\begin{itemize}
  \item $w(x)\geq m$ for every $-1\leq x\leq 1$.
  \item $w$ is absolutely continuous and $w'\in
L_p[-1,1]$.
\end{itemize}
Then there exists a constant $C(w)>0$ such that for every
differentiability point $x\in(-1,1)$ of $\pi$,
\begin{equation}\label{eq:required_pi_minus_estimate3}
-C(w)\left(\frac{1}{1-x}+\frac{1}{\lambda(x)^{1/p}}\right)\lambda(x)\le
\pi'(x)-w(x)\leq
C(w)\left(\min\left\{\frac{1}{1+x},n^2\right\}+\frac{1}{\lambda(x)^{1/p}}\right)\lambda(x).
\end{equation}
\end{theorem}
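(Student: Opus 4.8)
The plan is to run the proof of Theorem~\ref{thm:Lipschitz} essentially unchanged, isolating the place(s) where the Lipschitz hypothesis on $w$ is actually used and replacing the estimate there by its $L_p$-Sobolev analogue. The first observation is that the hypotheses of Theorem~\ref{thm:abs_cont} already supply everything the ``geometric'' part of that proof needs: since $w$ is absolutely continuous with $w'\in L_p[-1,1]$ for some $p>1$, we have $w\in W^{1,p}[-1,1]$, so $w$ is (Hölder) continuous of exponent $1-1/p$ and in particular bounded, $M:=\sup_{[-1,1]}w<\infty$; together with $w\ge m>0$ this is all that is invoked when one uses the theory of canonical representations, the bound on $\lambda$ from Lemma~\ref{lambda}, and the explicit expression for $\pi'$ from Lemma~\ref{deriv-shift}, and when one derives the endpoint factors $\frac{1}{1-x}$ and $\min\{\frac{1}{1+x},n^2\}$. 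All of these steps therefore carry over verbatim, with the resulting constant now depending on $m$, $M$, $p$ and $\|w'\|_{L_p[-1,1]}$.

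The Lipschitz hypothesis is used in the proof of Theorem~\ref{thm:Lipschitz} only to control how much $w$ deviates from a constant (or from an affine function) on an interval about $x$ of length comparable to $\lambda(x)$, where it contributes a factor of order $R\lambda(x)$. The replacement is the elementary Hölder bound: for any $-1\le u<v\le 1$,
\[
|w(v)-w(u)|=\left|\int_u^v w'(s)\,ds\right|\le \|w'\|_{L_p[-1,1]}\,(v-u)^{1-1/p},
\]
so the oscillation of $w$ on an interval of length $h$ is at most $\|w'\|_{L_p}h^{1-1/p}$, and on the relevant scale $h\asymp\lambda(x)$ this is $\|w'\|_{L_p}\lambda(x)^{1-1/p}=\|w'\|_{L_p}\lambda(x)^{-1/p}\cdot\lambda(x)$. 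Substituting this quantity for $R\lambda(x)$ in that step turns the corresponding error contribution into one of order $C(w)\lambda(x)^{-1/p}\cdot\lambda(x)$, which is precisely the extra summand appearing in \eqref{eq:required_pi_minus_estimate3} relative to \eqref{eq:required_pi_minus_estimate1}; adding it to the unchanged endpoint-factor contributions gives the two claimed inequalities.

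The main obstacle is the precise book-keeping behind the previous paragraph. One must first pin down, inside the proof of Theorem~\ref{thm:Lipschitz}, exactly which error term is estimated via Lipschitz continuity and at what length scale, and then verify that the part of that term which depends on the fine behavior of $w$ is genuinely localized to a window of width $O(\lambda(x))$ about $x$ — so that on that window the global Hölder bound above, rather than the modulus of continuity (which is weaker at long range), suffices, and so that the new term emerges \emph{without} being multiplied by an endpoint factor — while the remainder of the error term is controlled using only $m\le w\le M$ and hence reproduces exactly the same endpoint factors as before. Carrying this out also fixes the dependence of $C(w)$ on $m$, $p$ and $\|w'\|_{L_p[-1,1]}$, and hence the uniformity of the estimate over the class of weights satisfying the hypotheses, as discussed in Section~\ref{sec:open_problems}.
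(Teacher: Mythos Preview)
Your overall plan --- reuse the scaffolding of the Lipschitz proof and modify only the step where regularity of $w$ enters --- is exactly what the paper does. But you have mis-identified both the term in question and the mechanism by which it is controlled, and as a result you are missing the one genuinely new ingredient.

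In the paper's proof the regularity of $w$ enters only through the quantity
\[
\int_{-1}^1 q_x(t)\,|w'(t)|\,dt,
\]
which appears after the integration by parts in Lemma~\ref{step}. In the Lipschitz case (Lemma~\ref{one more step}, part~1) this is bounded using the \emph{global} pointwise inequality $|w'|\le R$ together with $\int q_x w=\lambda(x)$, giving $\int q_x|w'|\le (R/m)\lambda(x)$. There is no ``oscillation of $w$ on an interval of width $\lambda(x)$'' argument anywhere in that proof, and your proposed substitution $|w(v)-w(u)|\le\|w'\|_p(v-u)^{1-1/p}$ does not plug into the actual term $\int q_x|w'|$.

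What is needed instead is H\"older's inequality on the integral: $\int q_x|w'|\le\|q_x\|_{p/(p-1)}\|w'\|_p$, followed by the interpolation $\|q_x\|_{p/(p-1)}\le\|q_x\|_\infty^{1/p}\|q_x\|_1^{1-1/p}$. The $L^1$ bound $\|q_x\|_1\le\lambda(x)/m$ comes for free from \eqref{eq:q_x_integral}, but the uniform bound $\|q_x\|_\infty\le C(w)$ is the real content: it is Proposition~\ref{propos:q bound for t far from x}, whose proof occupies its own subsection and relies on Badkov's theorem and the comparison Lemma~\ref{lem:q naive bound}. This is precisely the ``localization'' you flag as an obstacle, but it is not book-keeping --- it is the new input that distinguishes Theorem~\ref{thm:abs_cont} from Theorem~\ref{thm:Lipschitz}, and the paper says so explicitly at the start of Section~5.1.

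Finally, your closing remark about uniformity is not right: the paper notes in Section~\ref{sec:open_problems} that, unlike the Lipschitz case, the constant $C(w)$ in Theorem~\ref{thm:abs_cont} cannot be made explicit by these methods, because the constants in Badkov's theorem (and hence in Proposition~\ref{propos:q bound for t far from x}) are non-explicit.
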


\begin{theorem}\label{thm:discont}
Suppose $w$ is a function on $[-1,1]$ satisfying that for some $m>0$
and $-1<s_1<s_2<\ldots<s_L<1$:
\begin{itemize}
  \item $w(x)\geq m$ for every $-1\leq x\leq 1$.
  \item $w$ is absolutely continuous on each of the intervals
  $[-1,s_1),(s_1,s_2),\ldots,(s_L,1]$.
\end{itemize}
Then for every $\eps>0$ there exists a $C(w,\eps)>0$ such that
\begin{align}
|\pi'(x)-w(x)|\le \eps
\end{align}
for every differentiability point $x\in(-1,1)$ of $\pi$ satisfying
$1-x^2\ge \frac{C(w,\eps)}{n^2}$ and $|s_i -
x|\ge\frac{C(w,\eps)}{n}$ for $1\le i\le L$.
\end{theorem}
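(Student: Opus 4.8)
The plan is to deduce the statement from Theorem~\ref{thm:Lipschitz} by smoothing the weight and then estimating the effect of this modification. Fix $\eps>0$. Enlarging $C(w,\eps)$ if necessary we may assume $C(w,\eps)>1$ and that its size forces $n$ to be large, say $n\ge n_0(w,\eps)$ (for smaller $n$ no $x$ meets the two constraints and the claim is vacuous); fix such an $n$ and a differentiability point $x$ of $\pi$ with $1-x^2\ge C(w,\eps)/n^2$ and $|s_i-x|\ge C(w,\eps)/n$ for all $i$. Let $M:=\sup_{[-1,1]}w$, which is finite because $w$, being absolutely continuous on each of the finitely many pieces, is bounded there.

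First build a Lipschitz weight $\tilde w$ close to $w$. On each piece replace $w'$ by its truncation at a level $T=T(\eps)$; the result is Lipschitz with constant $T$ on that piece and uniformly within $\eta(T)$ of $w$ there, with $\eta(T)\to 0$ as $T\to\infty$, so for $T$ large this weight $w_T$ satisfies $m/2\le w_T\le M+1$. Now bridge the jumps: set $\delta_i:=\min\{\delta_0,\tfrac12|s_i-x|\}$ for a small constant $\delta_0=\delta_0(\eps)$, and on $(s_i-\delta_i,s_i+\delta_i)$ replace $w_T$ by the linear interpolation of its endpoint values; call the result $\tilde w$. Then $\tilde w$ is Lipschitz with constant $R_n=O\big(T+\delta_0^{-1}+n/C(w,\eps)\big)$, still satisfies $m/2\le\tilde w\le M+1$, and differs from $w$ only on the pieces (where $|w-\tilde w|\le\eta(T)$) and on $\bigcup_i(s_i-\delta_i,s_i+\delta_i)$; hence $\|w-\tilde w\|_1=O(\eta(T))+O(\delta_0)=:\eta_n$ and, since $\delta_i\le\tfrac12|s_i-x|$ gives $x\notin\bigcup_i(s_i-\delta_i,s_i+\delta_i)$, also $|\tilde w(x)-w(x)|\le\eta(T)$.

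Applying Theorem~\ref{thm:Lipschitz} to $\tilde w$ gives, at differentiability points of $\pi^{\tilde w}$, the bounds $-C(\tilde w)\lambda^{\tilde w}(x)/(1-x)\le(\pi^{\tilde w})'(x)-\tilde w(x)\le C(\tilde w)\lambda^{\tilde w}(x)\min\{1/(1+x),n^2\}$; at the finitely many exceptional points the same bounds hold for each one-sided derivative, which is all I use. By Lemma~\ref{lambda} and $1-x^2\ge C(w,\eps)/n^2\ge 1/n^2$ one has $\lambda^{\tilde w}(x)\le\tfrac{CM'}{n}\sqrt{1-x^2}$ with $M'=M+1$, bridging not having raised the maximum. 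Now split into cases. If some $|s_i-x|<\delta_0$, then $x$ lies in a fixed compact subinterval of $(-1,1)$, so $1/(1\pm x)=O(1)$ and the displayed bounds are $O\big(C(\tilde w)/n\big)$ with $R_n=O(n)$; if all $|s_i-x|\ge\delta_0$, then every $\delta_i\ge\delta_0/2$, so $R_n=O(1)$, and using $1\pm x\ge C(w,\eps)/(2n^2)$ the bounds are $O\big(C(\tilde w)/\sqrt{C(w,\eps)}\big)$. Since the explicit estimate of Section~\ref{sec:open_problems} makes the constant of Theorem~\ref{thm:Lipschitz} grow sublinearly in the Lipschitz bound, $C(\tilde w)=o(R_n)$; combining this with $|\tilde w(x)-w(x)|\le\eta(T)$ yields $|(\pi^{\tilde w})'(x)-w(x)|\le 2\eps/3$ once $T$, $C(w,\eps)$ and $n_0$ are large enough in terms of $w$ and $\eps$.

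It remains to bound $|\pi'(x)-(\pi^{\tilde w})'(x)|$, and this is the main obstacle. We have $\|w-\tilde w\|_1=\eta_n\to0$ as $T\to\infty$ and then $n\to\infty$, so the first $n+1$ moments of $w\,dt$ and $\tilde w\,dt$ differ by at most $\eta_n$ each; both $\pi$ and $\pi^{\tilde w}$ are analytic at $x$ (Section~\ref{sec:background}), and Lemma~\ref{deriv-shift} expresses their derivatives there through the orthogonal polynomials and principal representations of the respective weights, which---because both weights are bounded between $m/2$ and $M+1$, so the associated Hankel determinants are uniformly bounded below---one expects to be close when $\eta_n$ is small. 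Making this quantitative is the heart of the matter: the Chebyshev--Markov--Stieltjes inequalities give only that $\pi$ and $\pi^{\tilde w}$ are uniformly $O(1/n)$-close, which does not control their derivatives, so one must either quantify the stability of the orthogonal-polynomial data feeding Lemma~\ref{deriv-shift}---weighing the possible $n$-dependent deterioration of the conditioning of the Hankel matrices against the decay of $\eta_n$---or exploit the analyticity of $\pi$ and $\pi^{\tilde w}$ near $x$ together with Cauchy-type derivative bounds, for which one needs a uniform lower bound on their radii of analyticity. Granting such control, $|\pi'(x)-(\pi^{\tilde w})'(x)|\le\eps/3$ for $n$ large, and with the previous paragraph $|\pi'(x)-w(x)|\le\eps$, as required. (The secondary point that the constant of Theorem~\ref{thm:Lipschitz} must grow sublinearly in the Lipschitz constant, already used above, is precisely why the admissible distances from $x$ to the jumps and to $\pm1$ are of orders $1/n$ and $1/n^2$ rather than constants.)
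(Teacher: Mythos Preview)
Your proposal has a genuine gap at the step you yourself flag as ``the main obstacle'': you never prove that $|\pi'(x)-(\pi^{\tilde w})'(x)|$ can be made small. Closeness of $w$ and $\tilde w$ in $L_1$ (or even in moments) gives you, via the Chebyshev--Markov--Stieltjes inequalities, closeness of $\pi$ and $\pi^{\tilde w}$ only at the scale $O(1/n)$, which says nothing about derivatives. Invoking ``stability of the orthogonal-polynomial data'' or ``Cauchy-type bounds'' is not a proof: the conditioning of the relevant linear algebra deteriorates with $n$, and you would need a quantitative perturbation theorem for the canonical representations that the paper does not provide and that is not standard. Writing ``granting such control'' is precisely where the argument stops being a proof.

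There is also a secondary problem in the step you \emph{do} carry out. You assert that the constant $C(\tilde w)$ in Theorem~\ref{thm:Lipschitz} grows sublinearly in the Lipschitz constant, citing Section~\ref{sec:open_problems}. That section only says the constant can be made explicit in $m$ and $R$; it does not claim sublinearity, and in fact the proof gives a term $\frac{R}{m}\lambda(x)$ (Lemma~\ref{one more step}, part~1), which is \emph{linear} in $R$. With $R_n$ of order $n$ in your first case, the bound $O(C(\tilde w)/n)$ is merely $O(1)$, not small.

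The paper avoids both difficulties by never smoothing: it applies the entire machinery (Badkov's theorem, Lemmas~\ref{deriv-shift} and~\ref{lem:pre-prop3.3}, the $q_x$ localization of Proposition~\ref{propos:q bound for t far from x}) directly to the discontinuous $w$, which satisfies the hypotheses of Theorem~\ref{thm:Badkov_thm} with $\alpha=0$. The jumps enter only through the integration-by-parts step (Lemma~\ref{step2} replacing Lemma~\ref{step}), contributing terms $(w(s_i+)-w(s_i-))(p_x-\chi_{[-1,x]})(s_i)$; these are bounded by $C(w)q_x(s_i)$, and Proposition~\ref{propos:q bound for t far from x} gives $q_x(s_i)\le C(w)\min\{1/(n^2(s_i-x)^2),1\}$, which is exactly where the condition $|s_i-x|\ge C(w,\eps)/n$ enters. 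No perturbation of $\pi$ itself is needed.
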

\begin{urem}
The absolute continuity assumption in Theorem~\ref{thm:discont}
amounts to saying that $w$ is the sum of an absolutely continuous
function on $[-1,1]$ and a step function with discontinuities only
at the points $s_1,s_2,\ldots,s_L$. In particular, $w$ has left and
right limits at each of the $s_i$.

A more refined version of Theorem~\ref{thm:discont} may be obtained
using Remark~\ref{rem:refined_discontinuity}.
\end{urem}

Section~\ref{sec:background} presents the notation and background
used throughout the paper. In
Section~\ref{sec:preliminary_estimates} we gather several estimates
for orthogonal polynomials and quadrature formulas which will be
used throughout the proofs. The estimates of this section are all
essentially known but the concise presentation may be of use to a
non-expert in the literature. Accordingly, we include short proofs
for most of the statements there, relying on various comparison
arguments to the case of the constant weight function. An important
role in these is played by a theorem of Badkov, see
Theorem~\ref{thm:Badkov_thm} below. In
Section~\ref{sec:root_separation} we provide lower bounds on the
distance between nodes of canonical representations. In
Section~\ref{sec:p_a_derivative} we consider the polynomials whose
roots are the nodes of the canonical representations and establish
lower bounds for their derivatives at these nodes. We also prove
there that the interpolation polynomials corresponding to the
canonical representations exhibit strong localization properties.
Theorems~\ref{thm:Lipschitz} and \ref{thm:abs_cont} are proved in
Section~\ref{sec:pi_derivative}. Theorem~\ref{thm:discont} is proved
in Section~\ref{sec:thm_main2_proof}. In the final
Section~\ref{sec:open_problems} we discuss some open questions.

\section{Notation and background}\label{sec:background}

Throughout the paper we use the following notation. We refer the
reader to the book of Karlin and Studden \cite{Karlin-Studden},
especially to section 2 in chapter IV there, for reference to the
facts mentioned in this section.

Let $w$ be a non-negative, integrable function on the interval
$[-1,1]$, with non-zero integral. Such a $w$ will be called a
\emph{weight function}. Let $n$ be a positive integer and let
$\varphi$ be the $n$th-degree orthonormal polynomial, with positive
leading coefficient, with respect to $w$. Let $\psi$ be the
$(n-1)$th-degree orthonormal polynomial, with positive leading
coefficient, with respect to $(1-t^2)w(t)$. Figure~\ref{fig:phi_psi}
depicts these polynomials for $n=5$ and the weight function
$w(t)=\max\{1,1+4t\}$.

\begin{figure}[t]
\centering
{\includegraphics[width=0.7\textwidth]{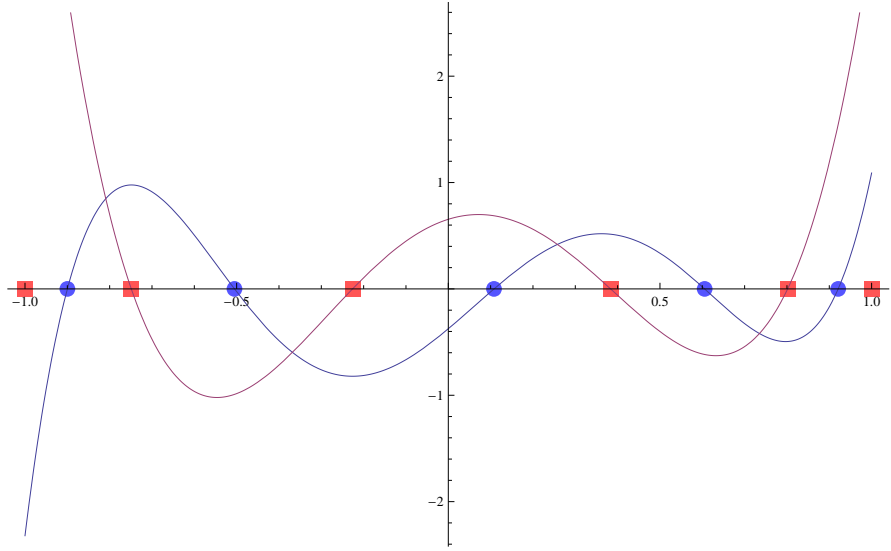}}
{\caption{A plot of the polynomials $\varphi$ and $\psi$ for $n=5$
and the weight function $w(t)=\max\{1,1+4t\}$. The circles on the
axis denote the nodes of the Gaussian quadrature and the squares
denote the nodes of the Lobatto quadrature. \label{fig:phi_psi}} }
\end{figure}

For every real $a$ define the polynomial
\begin{equation}\label{eq:P_a_def}
P_a(t):=\begin{cases}
\varphi(t)-a(1-t)\psi(t) & \mbox{if } a\geq 0 \\
\varphi(t)-a(1+t)\psi(t) & \mbox{if }a\leq 0 \end{cases}.
\end{equation}

$P_a$ has $n$ simple zeros in $(-1,1)$ which we denote by
\begin{equation*}
-1<\xi_1(a)<\ldots<\xi_n(a)<1.
\end{equation*}
Also $(1-t^2)\psi(t)$ has $n+1$ simple roots which we denote by
\begin{equation*}
-1=\eta_0<\eta_1<\ldots<\eta_n=1.
\end{equation*}
We will make use of the fact that
\begin{equation}\label{eq:xi_a_increases}
  \text{as $a$ increases from $-\infty$ to $\infty$, $\xi_i(a)$ increases
from $\eta_{i-1}$ to $\eta_i$}.
\end{equation}
Consistently with this fact, we denote $\xi_i(-\infty):=\eta_{i-1}$,
$\xi_i(\infty):=\eta_i$.

A \emph{quadrature} (formula) of degree $k$ for the weight function
$w$ is a set of points $-1\leq t_1\le t_2\le \ldots\le t_N\leq 1$,
called \emph{nodes}, and a set of \emph{weights}
$w_1,w_2,\ldots,w_N$ such that
\begin{equation}\label{eq:quadrature_def}
\int_{-1}^1p(t)w(t)dt=\sum_{i=1}^Nw_i p(t_i)
\end{equation}
for every polynomial $p$ of degree at most $k$. Define the
\emph{index} function
\begin{equation}\label{eq:index_fcn_def}
  I(u) := \begin{cases}
    2 & -1<u<1\\
    1 & u\in\{-1,1\}
  \end{cases}.
\end{equation}
Define also the index of a quadrature formula to be the sum of the
indices of its nodes.

We now describe all the degree $2n-1$ quadrature formulas for $w$,
having positive weights, whose index is at most $2n+1$. These
formulas are called \emph{canonical representations}. We describe
the set of nodes of these formulas, considering separately 4 cases:
\begin{enumerate}
  \item The roots $(\xi_i(0))$ are the nodes of a quadrature
  formula, known as the \emph{Gaussian quadrature} or the lower
  principal representation (see also Figure~\ref{fig:phi_psi}).
  \item The roots $(\eta_i)$ are the nodes of a quadrature
  formula, known as the \emph{Lobatto quadrature} or the upper principal
  representation (see also Figure~\ref{fig:phi_psi}).
  \item For every $0<a<\infty$ the roots $(\xi_i(a))$ with the additional
  node $-1$ are the nodes of a quadrature formula, called a lower
  canonical representation.
  \item For every $-\infty<a<0$ the roots $(\xi_i(a))$ with the additional
  node $1$ are the nodes of a quadrature formula, called an upper
  canonical representation.
\end{enumerate}
Every $-1<x<1$ is therefore a node of exactly one of these
quadrature formulas, and we denote this quadrature formula by
$\Sigma_x$ and its set of nodes by $S_x$.

It is a classical fact that for every $-1<x<1$, the suprema and
infimum in \eqref{eq:pi_underline_pi_lambda_def} are simultaneously
attained on the quadrature formula $\Sigma_x$ (and for
$x\in\{-1,1\}$ they are attained on the Lobatto quadrature).
Correspondingly, for $-1<x<1$ and $-1\leq u\leq 1$ we denote by
$\lambda_x(u)$ the weight of $u$ in $\Sigma_x$ and, for brevity, we
write $\lambda(x)$ instead of $\lambda_x(x)$. Thus,
$\lambda_x(u)\neq 0$ if and only if $u\in S_x$, and
$\lambda_x(u)=\lambda(u)$ if $u\in S_x-\{-1,1\}$.
Figure~\ref{fig:lambda} shows a plot of $\lambda$ for a certain
choice of weight function. In addition, we have that
\begin{align}\label{eq:pi_def}
\pi(x)=\sum_{u\in
S_x\cap[-1,x]}\lambda_x(u)\quad\text{and}\quad\underline{\pi}(x)&=\sum_{u\in
S_x\cap[-1,x)}\lambda_x(u)
\end{align}
are the weights given to the intervals $[-1,x]$ and $[-1,x)$,
respectively, by the quadrature formula $\Sigma_x$.
Figure~\ref{fig:pi_integral_and_pi_underline} shows a plot of $\pi$
and $\underline{\pi}$ for a certain choice of weight function. It
follows that for $x>\xi_n(0)$ we have $\pi(x) = \int_{-1}^1 w(t)dt$,
a fact which is clearly visible in the figure.

We remark also, as mentioned in the introduction, that the function
$\pi$ (and similarly $\underline{\pi}$) is analytic at all
$x\in(-1,1)$ which are not roots of the Gaussian or the Lobatto
quadratures. Indeed, fix $1\leq i\leq n$ and an interval
$I\subseteq\mathbb{R}-\{0\}$ and let us show that $\pi$ is analytic
on the interval $\{x\colon x=\xi_i(a), a\in I\}$. Assume that
$I\subseteq(0,\infty)$ as the case $I\subseteq(-\infty,0)$ is
treated similarly. Let $p_{i,a}(\cdot)$ be the minimum degree
polynomial satisfying $p_{i,a}(-1)=1$, $p_{i,a}(\xi_j(a))=1$ for
$1\leq j\leq i$ and $p_{i,a}(\xi_j(a))=0$ for $i<j\leq n$. Since
$p_{i,a}$ has degree smaller or equal to $2n-1$ we have
$\pi(\xi_i(a))=\int_{-1}^1 p_{i,a}(t)w(t)dt$ for all $a\in I$ by
\eqref{eq:pi_def}. We conclude that $\pi(\xi_i(a))$ is a rational
function of $\xi_1(a),\ldots,\xi_n(a)$ for $a\in I$. In addition, a
simple use of the implicit function theorem shows that
$\xi_1(a),\ldots,\xi_n(a)$ are analytic functions of $a$ for $a\in
I$. Thus we are done by observing that for $a\in I$,
$a=\frac{\varphi(\xi_i(a))}{(1-\xi_i(a))\psi(\xi_i(a))}$ is a
rational function of $\xi_i(a)$.

Throughout the paper we adopt the following policy regarding
constants. The constants $C(w)$ and $c(w)$ will always denote
positive constants whose value depends only on the function $w$. The
values of these constants will be allowed to change from line to
line, even within the same calculation, with the value of $C(w)$
increasing and the value of $c(w)$ decreasing. We similarly use $C$
and $c$ to denote positive absolute constants whose value may change
from line to line.

\section{Preliminary estimates}\label{sec:preliminary_estimates}
This section collects several estimates, all essentially known, on
the polynomials $\varphi$, $\psi$ and $P_a$, their zeros, and the
associated quadrature formulas.

\subsection{Orthogonal polynomials}
The following theorem, which gives lower and upper bounds on the
orthogonal polynomials $\varphi$ and $\psi$, plays an important role
in our work. The theorem is a corollary of results of Badkov
\cite[Theorem 1.2 and Theorem 1.4]{Badkov} (see also Theorem 13.2
there). The results of Badkov hold in great generality, yielding
estimates for orthogonal polynomials for a broad family of weights,
and are mostly stated in terms of trigonometric orthogonal
polynomials. Here we state a reformulation in terms of algebraic
orthogonal polynomials, of the special case that we need.
\begin{theorem}[Badkov]\label{thm:Badkov_thm}
  Suppose that the weight function $w$ has the form
  \begin{equation*}
    w(x) = h(x) (1-x^2)^\alpha\;\quad \alpha>-1,
  \end{equation*}
  where the function $h$ is assumed to be a measurable function on $[-1,1]$, satisfying
  $0<m\le h\le M$ almost everywhere and the `mean-squared H\"older condition'
  \begin{equation}\label{eq:Badkov's condition}
  \forall~ \delta>0,\quad\int_{-\pi}^{\pi} (h(\cos(\theta+\delta)) - h(\cos \theta))^2
    d\theta \le C\delta
  \end{equation}
  for some constants $M,m,C>0$.
  Then there exist constants $C(w),c(w)>0$
such that for all $-1<x<1$,
  \begin{equation}\label{eq:Badkov_bound}
    c(w)\min\left\{n,\frac{1}{\sqrt{1-x^2}}\right\}^{\alpha+1/2} \le \lvert \varphi(x)\rvert+\sqrt{1-x^2}\lvert \psi(x)\rvert
    \le C(w)\min\left\{n,\frac{1}{\sqrt{1-x^2}}\right\}^{\alpha+1/2}.
  \end{equation}
\end{theorem}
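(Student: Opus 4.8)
\emph{Proof plan.} I will obtain \eqref{eq:Badkov_bound} by transcribing Badkov's trigonometric estimates into the algebraic setting. The first step is to pass to the circle via $x=\cos\theta$, $\theta\in[0,\pi]$, extended to an even $2\pi$-periodic variable; this turns the measure $w(x)\,dx$ on $[-1,1]$ into $v(\theta)\,d\theta$ on $[-\pi,\pi]$, where
\[
v(\theta):=w(\cos\theta)\,|\sin\theta|=h(\cos\theta)\,|\sin\theta|^{2\alpha+1}
\]
is an even weight whose only zeros or singularities lie at $\theta=0$ and $\theta=\pm\pi$, with exponent $2\alpha+1>-1$. I then invoke the classical correspondence between algebraic and trigonometric orthonormal polynomials: a direct change of variables in the orthogonality relations shows that $\theta\mapsto\frac{1}{\sqrt{2}}\varphi(\cos\theta)$ is (up to sign) the even trigonometric orthonormal polynomial for $v$ of trigonometric degree $n$, and --- using that $\psi$ is orthonormal for $(1-t^2)w(t)$, so that the factor $\sin^2\theta$ converts $(1-t^2)w$ back into $v$ --- that $\theta\mapsto\frac{1}{\sqrt{2}}\sin\theta\,\psi(\cos\theta)$ is the odd trigonometric orthonormal polynomial for $v$ of trigonometric degree $n$. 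In the natural ordering of the trigonometric orthonormal system of $v$ these two polynomials are consecutive, and the normalising constants are absolute.

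Next I would verify that $v$ lies in the class of weights for which Badkov's pointwise bounds hold. It is a generalised Jacobi weight: the factor $|\sin\theta|^{2\alpha+1}$ contributes Jacobi-type singularities at $0$ and $\pm\pi$ with admissible exponent $2\alpha+1>-1$, and the remaining factor $h(\cos\theta)$ is bounded between $m$ and $M$ by hypothesis. The only further requirement is a modulus-of-continuity condition on this ``regular part'', and this is exactly what \eqref{eq:Badkov's condition} provides: it asserts that $\theta\mapsto h(\cos\theta)$ has $L^2$ modulus of continuity of order $\delta^{1/2}$, which in particular implies the Dini-type summability $\sum_{k\ge1}\omega_2(h(\cos\cdot),2^{-k})<\infty$ appearing among Badkov's hypotheses, while the two-sided bound on $h$ means that nothing further is needed near the singular points.

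Then I would apply Badkov's estimates (\cite[Theorems 1.2 and 1.4]{Badkov}, see also Theorem 13.2 there) and translate back. His upper bound gives $|t(\theta)|\le C(w)\varrho_n(\theta)$ for each of the two degree-$n$ trigonometric orthonormal polynomials $t$ of $v$, where $\varrho_n$ is the standard envelope attached to a generalised Jacobi weight; his lower bound --- and here it is essential to combine an even and an odd polynomial, whose oscillations are roughly a quarter period out of phase and so cannot jointly annihilate the envelope --- gives $t_{\mathrm e}(\theta)^2+t_{\mathrm o}(\theta)^2\ge c(w)\varrho_n(\theta)^2$ for the two consecutive degree-$n$ orthonormal polynomials $t_{\mathrm e}$ (even) and $t_{\mathrm o}$ (odd); near the singular points, on the Bessel scale $\mathrm{dist}(\theta,\{0,\pm\pi\})\le C(w)/n$, the odd polynomial is forced to be small while the even one alone realises the envelope, and Badkov's lower bounds cover this uniformly as well. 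Combining with the correspondence of the first step gives
\[
c(w)\,\varrho_n(\theta)^2\le\varphi(\cos\theta)^2+\sin^2\theta\,\psi(\cos\theta)^2\le C(w)\,\varrho_n(\theta)^2 .
\]
It remains to evaluate $\varrho_n$ for $v(\theta)=h(\cos\theta)|\sin\theta|^{2\alpha+1}$: for $\mathrm{dist}(\theta,\{0,\pm\pi\})\ge c(w)/n$ one has $\varrho_n(\theta)^2$ comparable to $v(\theta)^{-1}$, hence to $|\sin\theta|^{-(2\alpha+1)}$, whereas for $\mathrm{dist}(\theta,\{0,\pm\pi\})\le C(w)/n$ one has $\varrho_n(\theta)^2$ comparable to $n^{2\alpha+1}$; in all cases $\varrho_n(\theta)^2$ is comparable to $\min\{n,1/|\sin\theta|\}^{2\alpha+1}$. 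Since $|\sin\theta|=\sqrt{1-x^2}$ and $\sqrt{a^2+b^2}$ is comparable to $a+b$ for $a,b\ge 0$, taking square roots yields \eqref{eq:Badkov_bound}.

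The step I expect to demand the most care is matching the hypotheses with Badkov's precise formulation --- confirming that \eqref{eq:Badkov's condition} is exactly the regularity input his theorems require, with no hidden need for extra smoothness of $h$ at $x=\pm1$ --- together with extracting from his rather involved statements the specific non-oscillating two-sided bound for a pair of consecutive even and odd orthonormal polynomials, including the uniform treatment of the Bessel regime near the endpoints. The remaining manipulations are routine.
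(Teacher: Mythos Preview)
Your plan is essentially the same route the paper takes: reduce to Badkov's pointwise estimates for orthogonal polynomials on the circle after the substitution $x=\cos\theta$, then translate back. The identification of $\tfrac{1}{\sqrt2}\varphi(\cos\theta)$ and $\tfrac{1}{\sqrt2}\sin\theta\,\psi(\cos\theta)$ with the even and odd degree-$n$ trigonometric orthonormal polynomials for $v(\theta)=w(\cos\theta)|\sin\theta|$ is correct, and the envelope computation $\varrho_n(\theta)\asymp\min\{n,1/|\sin\theta|\}^{\alpha+1/2}$ is the right one.

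The one place where the paper's presentation differs usefully from yours is the lower bound. Badkov's Theorems~1.2 and~1.4 are stated for the orthogonal polynomials $\phi_n$ on the unit circle, and give a two-sided bound on $|\phi_n(e^{i\theta})|$; they do not directly assert $t_{\mathrm e}^2+t_{\mathrm o}^2\ge c(w)\varrho_n^2$ for a consecutive even/odd pair of trigonometric orthonormal polynomials. The paper bridges this with Szeg\H{o}'s formula \cite[Theorem~11.5]{Szego},
\[
\sqrt{\tfrac{2}{\pi}}\,e^{-in\theta}\phi_{2n}(e^{i\theta})=\sqrt{1+\tfrac{f_{2n}}{\ell_{2n}}}\,\varphi_n(\cos\theta)+i\sqrt{1-\tfrac{f_{2n}}{\ell_{2n}}}\,\sin\theta\,\psi_{n-1}(\cos\theta),
\]
together with $f_{2n}/\ell_{2n}\to 0$ (from the Szeg\H{o} condition, which holds since $\log v\in L^1$), so that $|\phi_{2n}(e^{i\theta})|^2\asymp \varphi(\cos\theta)^2+\sin^2\theta\,\psi(\cos\theta)^2$. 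This replaces your ``quarter period out of phase'' heuristic with a clean identity, and Badkov's lower bound on $|\phi_{2n}|$ then yields the lower bound in \eqref{eq:Badkov_bound} directly. Your route and the paper's are equivalent (the real and imaginary parts of Szeg\H{o}'s formula are, up to constants, exactly your $t_{\mathrm e}$ and $t_{\mathrm o}$), but the Szeg\H{o}-formula packaging is what makes the lower bound step mechanical rather than heuristic. The paper also records, for the special case where $h$ is Lipschitz and $\alpha=0$, a more elementary self-contained proof via the Korous comparison theorem against Legendre polynomials; you may want to be aware of that alternative.
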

We stress that the assumptions in the above theorem may hold even
for weight functions $w$ with discontinuities in $(-1,1)$. For our
purposes we shall use the theorem when $\alpha\in\{0,1\}$ and when
$h$ is itself a weight function satisfying the assumptions of
Theorem~\ref{thm:Lipschitz}, Theorem~\ref{thm:abs_cont} or
Theorem~\ref{thm:discont}. For instance, if $h$ is absolutely
continuous we may verify condition \eqref{eq:Badkov's condition} by
noting that for every $\delta>0$,
\begin{align*}&\int_{-\pi}^{\pi} (h(\cos(\theta+\delta)) - h(\cos \theta))^2
    d\theta \leq \int_{-\pi}^{\pi}\lvert h(\cos(\theta+\delta)) - h(\cos \theta)\rvert d\theta \max_{-1\leq x\leq 1}h(x)=\\
&=\int_{-\pi}^{\pi}\left\lvert\int_{\cos\theta}^{\cos(\theta+\delta)}h'(t)dt\right\rvert d\theta \max_{-1\leq x\leq 1}h(x)\leq \int_{-\pi}^{\pi}\int_{\min(\cos\theta,\cos(\theta+\delta))}^{\max(\cos\theta,\cos(\theta+\delta))}\lvert h'(t)\rvert dt d\theta \max_{-1\leq x\leq 1}h(x)\leq\\
%&=\int_{-1}^{1}\lvert h'(t)\rvert |\{\theta\in[-\pi,\pi]\colon
%\min(\cos\theta,\cos(\theta+\delta))\le t\le
%\max(\cos\theta,\cos(\theta+\delta))\}|dt
%\int_{-\pi}^\pi 1_{ \min(\cos\theta,\cos(\theta+\delta))\le t\le
%\max(\cos\theta,\cos(\theta+\delta))}d\theta dt
&\leq\left(2\int_{-1}^1\lvert h'(t)\rvert dt \max_{-1\leq x\leq
1}h(x)\right)\delta,
\end{align*}
where in the last inequality we have changed the order of
integration and used the fact that for each $-1\le t\le 1$, the set
of $\theta$ for which $t$ is between $\cos(\theta)$ and
$\cos(\theta+\delta)$, when viewed on the circle, is contained in
the union of the segments $[\arccos(t)-\delta, \arccos(t)]$ and
$[2\pi - \arccos(t)-\delta, 2\pi - \arccos(t)]$.

The above theorem seems to be the most complicated property of
orthogonal polynomials which we require. When the function $h$
satisfies the assumptions of Theorem~\ref{thm:Lipschitz} the proof of the theorem is simpler,
making use of the Korous comparison theorem \cite[Theorem
7.1.3]{Szego}. For completeness, the proof in this case and an overview of the general case are provided in the Appendix.

The following corollary summarizes the bounds on orthogonal
polynomials which we will need for this work.
\begin{cor}\label{cor:Badkov}
  Suppose $w$ satisfies the conditions of
Theorem~\ref{thm:Badkov_thm} with $\alpha=0$. Then there exist
constants $C(w),c(w)>0$ such that for every $-1<x<1$,
\begin{align}
  &\lvert \varphi(x)\rvert \le
  C(w)\min\left\{n,\frac{1}{\sqrt{1-x^2}}\right\}^{1/2},\label{eq:phi_upper_bound}\\
  &\lvert \psi(x)\rvert \le
  C(w)\min\left\{n,\frac{1}{\sqrt{1-x^2}}\right\}^{3/2}\label{eq:psi_upper_bound},\\
  &\lvert \varphi(x)\rvert+(1-x^2)\min\left\{n,\frac{1}{\sqrt{1-x^2}}\right\}\lvert \psi(x)\rvert
  \ge c(w)\min\left\{n,\frac{1}{\sqrt{1-x^2}}\right\}^{1/2}\label{eq:phi_psi_lower_bound_new}.
\end{align}
\end{cor}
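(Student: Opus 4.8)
The plan is to derive all three inequalities directly from the two-sided bound \eqref{eq:Badkov_bound} of Theorem~\ref{thm:Badkov_thm}, applied with the appropriate value of $\alpha$. Throughout, write $\nu(x):=\min\{n,1/\sqrt{1-x^2}\}$ for brevity (this is just notation for the plan, not a macro I would introduce in the paper). The key point is that $\varphi$ is orthonormal with respect to $w=h$, which has the form required by Theorem~\ref{thm:Badkov_thm} with $\alpha=0$, while $\psi$ is orthonormal with respect to $(1-t^2)w(t)=h(t)(1-t^2)^1$, which has that form with $\alpha=1$; and in both cases the function $h$ is the same, so it satisfies the hypotheses (boundedness away from $0$ and $\infty$, and the mean-squared Hölder condition \eqref{eq:Badkov's condition}) for both applications.

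First I would prove the upper bounds \eqref{eq:phi_upper_bound} and \eqref{eq:psi_upper_bound}. For \eqref{eq:phi_upper_bound}, apply Theorem~\ref{thm:Badkov_thm} to $w=h$ with $\alpha=0$: the upper half of \eqref{eq:Badkov_bound} gives $|\varphi(x)|+\sqrt{1-x^2}\,|\psi(x)|\le C(w)\,\nu(x)^{1/2}$, and since both summands are nonnegative this immediately yields $|\varphi(x)|\le C(w)\,\nu(x)^{1/2}$. For \eqref{eq:psi_upper_bound}, apply Theorem~\ref{thm:Badkov_thm} to the weight $(1-t^2)w(t)=h(t)(1-t^2)$ with $\alpha=1$; here the $n$th-degree orthonormal polynomial for the original weight is replaced by the appropriate one for $(1-t^2)w$, but the relevant output is the bound on the $(n-1)$th-degree orthonormal polynomial with respect to $(1-t^2)w$, which is exactly $\psi$. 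The theorem gives $\sqrt{1-x^2}\,|\psi(x)|\le|\tilde\varphi(x)|+\sqrt{1-x^2}\,|\psi(x)|\le C(w)\,\nu(x)^{3/2}$, hence $|\psi(x)|\le C(w)\,\nu(x)^{3/2}/\sqrt{1-x^2}$. This is slightly weaker than \eqref{eq:psi_upper_bound} near the endpoints, so I would then remove the singular factor by a separate argument: on the region $1-x^2\ge c/n^2$ one has $1/\sqrt{1-x^2}\le Cn\le C\nu(x)\cdot\nu(x)$... — more carefully, on $1-x^2\ge c/n^2$ we have $\nu(x)=1/\sqrt{1-x^2}$ so $\nu(x)^{3/2}/\sqrt{1-x^2}=\nu(x)^{5/2}$, which is too big; the correct route is to note that $\psi$ has degree $n-1$ and invoke the Markov–Bernstein-type bound (or simply the explicit comparison to the Chebyshev case) to control $|\psi|$ near $\pm1$ by its size in the bulk. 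The cleanest fix is to use that for a polynomial of degree $n-1$ orthonormal with respect to $(1-t^2)w$, the quantity $|\psi(x)|$ near the endpoint is comparable to $n^{3/2}$, matching $\nu(\pm1)^{3/2}=n^{3/2}$; this follows from \eqref{eq:Badkov_bound} applied at a point $x$ with $1-x^2\asymp 1/n^2$ together with the monotonicity/continuity properties already recorded, or directly from Badkov's estimates which in fact give the bound uniformly. I expect this endpoint bookkeeping to be the only genuinely delicate point, and I would handle it by appealing to the uniform form of Badkov's estimate rather than re-deriving it.

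Finally, for the lower bound \eqref{eq:phi_psi_lower_bound_new}, I start from the lower half of \eqref{eq:Badkov_bound} with $\alpha=0$ applied to $w=h$, which gives $|\varphi(x)|+\sqrt{1-x^2}\,|\psi(x)|\ge c(w)\,\nu(x)^{1/2}$. To convert the factor $\sqrt{1-x^2}$ into the factor $(1-x^2)\nu(x)$ appearing in \eqref{eq:phi_psi_lower_bound_new}, observe that $\sqrt{1-x^2}\le (1-x^2)\nu(x)$ is \emph{false} in general (it fails near the endpoints where $(1-x^2)\nu(x)=(1-x^2)\cdot\tfrac{1}{\sqrt{1-x^2}}=\sqrt{1-x^2}$ — so actually equality holds there — while in the bulk $(1-x^2)\nu(x)=(1-x^2)n$ which can be larger); in fact on the whole interval $(1-x^2)\nu(x)\ge\sqrt{1-x^2}$ precisely when $\nu(x)\ge 1/\sqrt{1-x^2}$, i.e. always, since $\nu(x)=\min\{n,1/\sqrt{1-x^2}\}\le 1/\sqrt{1-x^2}$ — so this inequality goes the wrong way. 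The right observation is the reverse: $(1-x^2)\nu(x)\le\sqrt{1-x^2}$, hence $(1-x^2)\nu(x)\,|\psi(x)|\le\sqrt{1-x^2}\,|\psi(x)|$, and so \eqref{eq:phi_psi_lower_bound_new} does \emph{not} follow immediately from \eqref{eq:Badkov_bound} — one needs to also use the upper bound \eqref{eq:psi_upper_bound} to argue that near the endpoints the $\psi$ term in \eqref{eq:Badkov_bound} cannot be the sole contributor unless it is already of size $\nu(x)^{1/2}/$(something), forcing $|\varphi(x)|$ to carry the lower bound. Concretely: if $\sqrt{1-x^2}\,|\psi(x)|\ge \tfrac12 c(w)\nu(x)^{1/2}$ then since $\sqrt{1-x^2}\le 1$ and, in the relevant endpoint regime, $\sqrt{1-x^2}\asymp (1-x^2)\nu(x)$ up to the cutoff, we get $(1-x^2)\nu(x)|\psi(x)|\gtrsim c(w)\nu(x)^{1/2}$; otherwise $|\varphi(x)|\ge\tfrac12 c(w)\nu(x)^{1/2}$ directly. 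Splitting into the two cases $1-x^2\le 1/n^2$ (where $\nu(x)=n$ and $(1-x^2)\nu(x)$ vs. $\sqrt{1-x^2}$ are comparable) and $1-x^2\ge 1/n^2$ (where $\nu(x)=1/\sqrt{1-x^2}$ so $(1-x^2)\nu(x)=\sqrt{1-x^2}$ exactly, and \eqref{eq:phi_psi_lower_bound_new} reduces verbatim to the lower half of \eqref{eq:Badkov_bound}) makes the argument clean. The main obstacle, then, is purely the endpoint regime $1-x^2\asymp 1/n^2$, where one must match the three different powers of $n$ appearing in \eqref{eq:phi_upper_bound}–\eqref{eq:phi_psi_lower_bound_new}; away from the endpoints everything is a direct transcription of Theorem~\ref{thm:Badkov_thm}.
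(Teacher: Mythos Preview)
Your overall strategy matches the paper's: derive all three bounds from Theorem~\ref{thm:Badkov_thm} applied with $\alpha=0$ and $\alpha=1$. Two points in your execution, however, are off.

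For \eqref{eq:psi_upper_bound} you misidentify which role $\psi$ plays in the $\alpha=1$ application. When Theorem~\ref{thm:Badkov_thm} is applied with weight $h(t)(1-t^2)$ and $\alpha=1$, the ``$\psi$'' appearing there is the $(n-1)$th orthonormal polynomial for $(1-t^2)^2 h(t)$, not our $\psi$. Our $\psi$ is the $(n-1)$th orthonormal polynomial for $(1-t^2)h(t)$, so it is the ``$\varphi$'' of the $\alpha=1$ application taken at degree $n-1$. Reading off the upper bound on that ``$\varphi$'' gives $|\psi(x)|\le C(w)\min\{n-1,(1-x^2)^{-1/2}\}^{3/2}$ directly, with no endpoint bookkeeping. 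This is precisely what the paper does (``follows from the case $\alpha=1$ using the upper bound on $\varphi$'').

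For \eqref{eq:phi_psi_lower_bound_new} your claim that $(1-x^2)\nu(x)$ and $\sqrt{1-x^2}$ are comparable on $\{1-x^2\le n^{-2}\}$ is false: their ratio is $n\sqrt{1-x^2}$, which tends to $0$ as $x\to\pm1$. Hence your case ``$\sqrt{1-x^2}\,|\psi(x)|\ge\tfrac12 c(w)\nu(x)^{1/2}$'' is not actually handled. The paper's argument is exactly the idea you stated but did not carry through: split at $1-x^2=\eps n^{-2}$ for a small $\eps=\eps(w)$. For $1-x^2\ge\eps n^{-2}$ one checks $(1-x^2)\nu(x)\ge\sqrt{\eps}\,\sqrt{1-x^2}$, so the desired lower bound follows from the $\alpha=0$ lower bound of \eqref{eq:Badkov_bound} with an extra factor $\sqrt{\eps}$. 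For $1-x^2<\eps n^{-2}$, the already-proved \eqref{eq:psi_upper_bound} gives $\sqrt{1-x^2}\,|\psi(x)|\le C(w)\sqrt{\eps}\,n^{1/2}$; choosing $\eps$ small makes this strictly less than $c(w)n^{1/2}$, forcing $|\varphi(x)|\ge(c(w)-C(w)\sqrt{\eps})n^{1/2}$ and finishing the proof.
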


\begin{proof}
The corollary follows directly from Theorem~\ref{thm:Badkov_thm}.
The bound \eqref{eq:phi_upper_bound} follows from the case
$\alpha=0$. The bound \eqref{eq:psi_upper_bound} follows from the
case $\alpha=1$ using the upper bound on $\varphi$ in
\eqref{eq:Badkov_bound}.
For the bound \eqref{eq:phi_psi_lower_bound_new}, first observe that
by the case $\alpha=0$,
\begin{equation}\label{eq:Badkov_lower_bound_third_case_proof}
  \lvert \varphi(x)\rvert+\sqrt{1-x^2}\lvert \psi(x)\rvert
  \ge c(w)\min\left\{n,\frac{1}{\sqrt{1-x^2}}\right\}^{1/2},\quad
  -1<x<1.
\end{equation}
Together with the fact that for every $0<\eps<1$ we have
\begin{equation*}
  \lvert \varphi(x)\rvert+(1-x^2)\min\left\{n,\frac{1}{\sqrt{1-x^2}}\right\}\lvert \psi(x)\rvert
  \ge \sqrt{\eps}\left(\lvert \varphi(x)\rvert+\sqrt{1-x^2}\lvert \psi(x)\rvert\right),\quad
  1-x^2\ge\eps n^{-2},
\end{equation*}
we see that we need only prove \eqref{eq:phi_psi_lower_bound_new}
when $1-x^2< \eps n^{-2}$ for some fixed $0<\eps<1$. To this end,
observe that by \eqref{eq:psi_upper_bound} we have
\begin{equation*}
  \lvert\psi(x)\rvert \le C(w)n^{3/2},\quad -1<x<1.
\end{equation*}
Substituting this relation into
\eqref{eq:Badkov_lower_bound_third_case_proof} yields that for every
$0<\eps<1$,
\begin{equation*}
  \lvert \varphi(x)\rvert
  \ge \left(c(w)-\sqrt{\eps}C(w)\right)n^{1/2},\quad 1-x^2\le \eps n^{-2}.
\end{equation*}
Fixing $\eps$ sufficiently small so that the right-hand side is
positive proves \eqref{eq:phi_psi_lower_bound_new} when $1-x^2< \eps
n^{-2}$.
\end{proof}

The following proposition uses the upper bounds of the previous
corollary to obtain derivative bounds. To obtain unified expressions
we make use of the sign function, defined by
\begin{equation}\label{eq:sgn_a_def}
  \sgn(a):=\begin{cases}
    1&a>0\\
    0&a=0\\
    -1&a<0.
  \end{cases}
\end{equation}
\begin{propos}\label{Bernstein} Suppose $w$ satisfies the conditions of
Theorem~\ref{thm:Badkov_thm} with $\alpha=0$. Then there exists a
constant $C(w)>0$ such that for every $-1<x<1$,
\begin{align}
  |\varphi'(x)|&\leq C(w)
  n\cdot\min\left\{n,\frac{1}{\sqrt{1-x^2}}\right\}^{3/2},\label{eq:phi_deriv_estimate}\\
  |\psi'(x)|&\leq C(w)
  n\cdot\min\left\{n,\frac{1}{\sqrt{1-x^2}}\right\}^{5/2},\label{eq:psi_deriv_estimate}\\
 |P_a'(x)|
  &\leq C(w)n\cdot\min\left\{n,\frac{1}{\sqrt{1-x^2}}\right\}^{3/2}\left(1+|a|\left((1-\sgn(a)x)\min\left\{n,\frac{1}{\sqrt{1-x^2}}\right\}+\frac{1}{n}\right)\right).\label{eq:P_a_deriv_estimate}
\end{align}
\end{propos}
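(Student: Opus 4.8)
The plan is to first reduce \eqref{eq:P_a_deriv_estimate} to the bounds \eqref{eq:phi_deriv_estimate} and \eqref{eq:psi_deriv_estimate} for $\varphi'$ and $\psi'$, and then to prove those two together. For the reduction, write $\Delta:=\min\{n,(1-x^2)^{-1/2}\}$; differentiating \eqref{eq:P_a_def} gives $P_a'=\varphi'+a\psi-a(1-t)\psi'$ when $a\ge 0$ and $P_a'=\varphi'-a\psi-a(1+t)\psi'$ when $a\le 0$, so in both cases
\[
|P_a'(x)|\le|\varphi'(x)|+|a|\,|\psi(x)|+|a|(1-\sgn(a)x)\,|\psi'(x)|.
\]
Substituting \eqref{eq:psi_upper_bound} from Corollary~\ref{cor:Badkov} together with \eqref{eq:phi_deriv_estimate} and \eqref{eq:psi_deriv_estimate}, the three terms become $C(w)n\Delta^{3/2}$, $C(w)n\Delta^{3/2}\cdot\frac{|a|}{n}$ and $C(w)n\Delta^{3/2}\cdot|a|(1-\sgn(a)x)\Delta$, whose sum is precisely the right-hand side of \eqref{eq:P_a_deriv_estimate}.

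Now, \eqref{eq:phi_deriv_estimate} and \eqref{eq:psi_deriv_estimate} are both special cases of the assertion: if $p$ is a polynomial of degree at most $n$ such that $\theta\mapsto p(\cos\theta)$ is even and $|p(x)|\le C(w)\Delta^{\gamma}$ on $(-1,1)$ — take $\gamma=\tfrac12$ for $p=\varphi$ using \eqref{eq:phi_upper_bound}, and $\gamma=\tfrac32$ for $p=\psi$ using \eqref{eq:psi_upper_bound} — then $|p'(x)|\le C(w)n\Delta^{\gamma+1}$ on $(-1,1)$. I would prove this with the substitution $x=\cos\theta$: set $q(\theta):=p(\cos\theta)$, an even trigonometric polynomial of degree $\le n$ with $\|q\|_\infty\le C(w)n^{\gamma}$, and $\mu(\theta):=\min\{n,|\sin\theta|^{-1}\}$, so the hypothesis is $|q(\theta)|\le C(w)\mu(\theta)^{\gamma}$. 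Since $p'(\cos\theta)=-q'(\theta)/\sin\theta$ and $|\sin\theta|\mu(\theta)=\min\{n|\sin\theta|,1\}$, the desired bound $|p'(\cos\theta)|\le C(w)n\mu(\theta)^{\gamma+1}$ becomes
\begin{equation*}
|q'(\theta_0)|\le C(w)\,n\,\mu(\theta_0)^{\gamma}\min\{n|\sin\theta_0|,1\},\qquad 0<\theta_0<\pi.\tag{$\star$}
\end{equation*}
By the symmetries $\mu(\theta)=\mu(-\theta)=\mu(\pi-\theta)$ and the evenness of $q$ about $0$ and about $\pi$ it is enough to prove $(\star)$ for $0<\theta_0\le\pi/2$, which I would split into two regimes.

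If $\theta_0\le 2/n$ then $\mu(\theta_0)\asymp n$ and $\min\{n|\sin\theta_0|,1\}\asymp n\theta_0$, so $(\star)$ asks for $|q'(\theta_0)|\le C(w)n^{2+\gamma}\theta_0$; since $q$ is even we have $q'(0)=0$, hence $|q'(\theta_0)|\le\theta_0\|q''\|_\infty\le\theta_0 n^2\|q\|_\infty\le C(w)n^{2+\gamma}\theta_0$ by two applications of the trigonometric Bernstein inequality. If $2/n\le\theta_0\le\pi/2$ then $\min\{n|\sin\theta_0|,1\}=1$ and $\mu(\theta_0)\asymp\theta_0^{-1}$, so $(\star)$ asks for $|q'(\theta_0)|\le C(w)n\theta_0^{-\gamma}$. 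I would use a de la Vallée Poussin kernel $V_n$ — a trigonometric polynomial of degree $\le 2n$ with $\widehat{V_n}(k)=1$ for $|k|\le n$, $\tfrac{1}{2\pi}\int_{-\pi}^{\pi}|V_n'|\le Cn$, and $|V_n'(t)|\le Cn^2(1+n|t|)^{-3}$ — so that $q=q*V_n$ and
\[
q'(\theta_0)=\frac{1}{2\pi}\int_{-\pi}^{\pi}q(s)\,V_n'(\theta_0-s)\,ds,
\]
and I would split this integral over $|s-\theta_0|\le\theta_0/2$ and over the complement. On the first part $\sin s\asymp\sin\theta_0$, so $|q(s)|\le C(w)\mu(s)^{\gamma}\le C(w)\theta_0^{-\gamma}$ and the bound $\tfrac{1}{2\pi}\int|V_n'|\le Cn$ gives a contribution $\le C(w)n\theta_0^{-\gamma}$. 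On the complement $|\theta_0-s|>\theta_0/2\ge 1/n$, so $|V_n'(\theta_0-s)|\le Cn^{-1}|\theta_0-s|^{-3}$; using only the crude bound $|q(s)|\le C(w)n^{\gamma}$ this part is $\le C(w)n^{\gamma-1}\theta_0^{-2}$, which is $\le C(w)n\theta_0^{-\gamma}$ since $\gamma\le\tfrac32$ and $n\theta_0\ge 2$. Together these give $(\star)$.

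The main obstacle is the regime $\theta_0\ge 2/n$: one needs a reproducing kernel whose derivative concentrates at scale $1/n$ and has a tail decaying fast enough to be absorbed into the main term — which is why the de la Vallée Poussin kernel, rather than the Dirichlet kernel, is used — and one must verify that $\mu$ stays comparable to $\mu(\theta_0)$ on the window $|s-\theta_0|\le\theta_0/2$. The edge regime $\theta_0\le 2/n$ is the only place where the evenness of $q$ (equivalently, the finiteness of $p'(\pm1)$) is really needed: the convolution estimate by itself does not detect the vanishing of $q'$ at $0$ and $\pi$ and would produce a bound that blows up like $1/\sin\theta_0$ near the endpoints.
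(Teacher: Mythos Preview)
Your reduction of \eqref{eq:P_a_deriv_estimate} to \eqref{eq:phi_deriv_estimate}, \eqref{eq:psi_deriv_estimate} and the bound \eqref{eq:psi_upper_bound} is exactly what the paper does. For \eqref{eq:phi_deriv_estimate} and \eqref{eq:psi_deriv_estimate} themselves, however, the paper takes a much shorter route. It stays on the algebraic side and applies the combined Bernstein--Markov inequality
\[
|p'(x)|\le \deg p\cdot\min\Bigl\{\frac{1}{\sqrt{\alpha^2-x^2}},\,\frac{\deg p}{\alpha}\Bigr\}\max_{[-\alpha,\alpha]}|p|
\]
on the sub-interval $I=\bigl[-\sqrt{1-\rho^2/2},\sqrt{1-\rho^2/2}\,\bigr]$, where $\rho:=\sqrt{1-x^2}$. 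Since $1-t^2\ge\rho^2/2$ for $t\in I$, the hypotheses \eqref{eq:phi_upper_bound} and \eqref{eq:psi_upper_bound} become \emph{uniform} bounds $C(w)\min\{n,\sqrt{2}/\rho\}^{1/2}$ and $C(w)\min\{n,\sqrt{2}/\rho\}^{3/2}$ on all of $I$; at $x$ one has $\alpha^2-x^2=\rho^2/2$ and $|I|\ge\sqrt{2}$, so the Bernstein--Markov factor is at most $Cn\min\{n,1/\rho\}$. This yields both derivative bounds in one line, with no trigonometric substitution, no case split near the endpoints, and no reproducing kernel. What your argument buys is a clean general principle (any polynomial with $|p|\le C\Delta^{\gamma}$ has $|p'|\le Cn\Delta^{\gamma+1}$) and a transparent explanation of why evenness matters only at the edge; what the paper's argument buys is brevity.

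One technical point in your version needs care. The \emph{standard} de la Vall\'ee Poussin kernel $V_n=2K_{2n-1}-K_{n-1}$, whose Fourier multiplier is only piecewise linear, satisfies $|V_n'(t)|\le Cn^2(1+n|t|)^{-2}$ rather than the cubic decay you quote. With quadratic decay the tail contribution in the regime $\theta_0\ge 2/n$ is $C(w)n^{\gamma}\theta_0^{-1}$, and the requirement $n^{\gamma}\theta_0^{-1}\le Cn\theta_0^{-\gamma}$ amounts to $(n\theta_0)^{\gamma-1}\le C$, which fails for $\gamma=3/2$ once $n\theta_0$ is large. Your argument goes through unchanged if you replace $V_n$ by a kernel built from a smoother cutoff on the Fourier side (a $C^1$ bump, or a Jackson-type kernel), which does give $|V_n'(t)|\le Cn^2(1+n|t|)^{-3}$.
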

\begin{proof}
We use a combination of the Bernstein (see \cite[Theorem
1.22.3]{Szego}) and A. Markov (see \cite[Theorem
5.1.8]{Borwein-Erdelyi}) inequalities, which states that for any
polynomial $p$ and every $-\alpha<x<\alpha$,
\begin{equation}\label{eq:Bernstein_Markov_inequality}
|p'(x)|\leq\deg p\cdot\min\left\{\frac{1}{\sqrt{\alpha^2 -
x^2}},\frac{\deg p}{\alpha}\right\}\max_{-\alpha\leq t\leq
\alpha}|p(t)|.
\end{equation}

Fix $-1<x<1$ and denote $\rho:=\sqrt{1-x^2}$. First we prove
\eqref{eq:phi_deriv_estimate}.  Let
$I:=\left[-\sqrt{1-\frac{1}{2}\rho^2},
\sqrt{1-\frac{1}{2}\rho^2}\,\right]$. Observe that $x\in I$, $|I|\ge
\sqrt{2}$ and $1-t^2\ge \rho^2/2$ when $t\in I$. By
\eqref{eq:phi_upper_bound},
$$\max_{t\in I}\,|\varphi(t)|\leq \max_{t\in I}\, C(w)\min\left\{n,\frac{1}{\sqrt{1-t^2}}\right\}^{1/2}\leq C(w)\min\left\{n,\frac{\sqrt{2}}{\rho}\right\}^{1/2}.$$
Thus, using \eqref{eq:Bernstein_Markov_inequality} for the interval
$I$ yields
$$|\varphi'(x)|\leq n\cdot\min\left\{\frac{\sqrt{2}}{\rho},\frac{2n}{|I|}\right\}C(w)\min\left\{n,\frac{\sqrt{2}}{\rho}\right\}^{1/2}\le C(w) n\cdot\min\left\{n,\frac{1}{\rho}\right\}^{\frac{3}{2}}.$$

The bound \eqref{eq:psi_deriv_estimate} follows similarly. To get
\eqref{eq:P_a_deriv_estimate}, note that
$$P'_a(x)=\begin{cases}\varphi'(x)-a(1-x)\psi'(x)+a\psi(x)
  &a>0\\[3pt]\varphi'(x)-a(1+x)\psi'(x)-a\psi(x)
&a<0
  \end{cases}$$
and use \eqref{eq:psi_upper_bound}, \eqref{eq:phi_deriv_estimate}
and \eqref{eq:psi_deriv_estimate}.
\end{proof}

We remark that certain lower bounds on the derivative of $P_a$ and
$\psi$ are proved in Section~\ref{sec:p_a_derivative}.

\subsection{Quadrature formulas}
In this section we will sometimes have need to consider two weight
functions simultaneously. In such cases, to avoid ambiguity, we add
the superscript $w$ to quantities such as $\xi, \eta, \lambda$ and
$\pi$ to indicate that the weight function is $w$.

\subsubsection{Distance of nodes from endpoints}
Let $S$ be the set of all non-zero polynomials $p$ satisfying
$\deg(p)\le 2n-2$ and $p\ge 0$ on $[-1,1]$. The following lemma
gives a max-min formula for $\xi_i$ involving polynomials in $S$.

\begin{lemma}\label{lem:node_max_min_formula}
For every weight function $w$,
\begin{equation}\label{eq:xi_i_max_min}
\xi_i(0)=\max_{-1<z_1<\ldots<z_{i-1}<1}\min_{\substack{p\in S\\
p(z_1)=\ldots=p(z_{i-1})=0}}\frac{\int_{-1}^1tp(t)w(t)dt}{\int_{-1}^1p(t)w(t)dt},\quad\quad
1\le i\le n.
\end{equation}
\end{lemma}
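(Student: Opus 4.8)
The plan is to read the target quantity as a weighted mean: for $p\in S$ the measure $p(t)w(t)\,dt$ is a nonzero positive measure on $[-1,1]$ (nonzero because $w$ has positive integral and a nonzero polynomial that is nonnegative on $[-1,1]$ is positive off a finite set), so $\int_{-1}^1 tp(t)w(t)\,dt\big/\int_{-1}^1 p(t)w(t)\,dt$ is well defined and lies in $(-1,1)$. The main tool will be the Gaussian quadrature $(\xi_k(0))_{k=1}^n$: being a canonical representation (case~1 above) it has positive weights $\gamma_1,\dots,\gamma_n$ and integrates every polynomial of degree $\le 2n-1$ exactly; recall also $-1<\xi_1(0)<\dots<\xi_n(0)<1$, so any truncation $\xi_1(0)<\dots<\xi_{i-1}(0)$ is an admissible choice of the outer variables. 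I will also use the elementary fact that if $p\in S$ vanishes at an interior point then, being nonnegative on $[-1,1]$, it has a zero of even order there; hence any $p\in S$ with $p(z_1)=\dots=p(z_{i-1})=0$ factors as $p(t)=\prod_{j=1}^{i-1}(t-z_j)^2\,q(t)$ with $q$ a nonzero polynomial, $q\ge 0$ on $[-1,1]$ and $\deg q\le 2(n-i)$.

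To prove that the right-hand side is $\le\xi_i(0)$, I fix arbitrary $-1<z_1<\dots<z_{i-1}<1$ and exhibit one admissible competitor whose mean is $\le\xi_i(0)$, namely
\[ p_0(t):=\prod_{j=1}^{i-1}(t-z_j)^2\,\prod_{k=i+1}^{n}(t-\xi_k(0))^2 . \]
This $p_0$ is nonnegative on $[-1,1]$, has degree $2n-2$, and vanishes at each $z_j$, so it is admissible. Since $(t-\xi_i(0))p_0(t)$ has degree $2n-1$, Gaussian quadrature gives
\[ \int_{-1}^{1}(t-\xi_i(0))\,p_0(t)\,w(t)\,dt=\sum_{k=1}^{n}\gamma_k\,(\xi_k(0)-\xi_i(0))\,p_0(\xi_k(0)). \]
In this sum the terms with $k\ge i+1$ vanish because of the factor $(t-\xi_k(0))^2$ in $p_0$, and the term $k=i$ vanishes because of the factor $\xi_i(0)-\xi_i(0)$; what remains is $\sum_{k=1}^{i-1}\gamma_k(\xi_k(0)-\xi_i(0))p_0(\xi_k(0))\le 0$, since $\gamma_k>0$, $\xi_k(0)<\xi_i(0)$ and $p_0\ge0$ on $[-1,1]$. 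Dividing by $\int_{-1}^1 p_0\,w>0$ shows the mean of $p_0(t)w(t)\,dt$ is $\le\xi_i(0)$, so the inner minimum over $p$ is $\le\xi_i(0)$ for this — arbitrary — choice of the $z_j$, hence the right-hand side is $\le\xi_i(0)$.

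For the reverse inequality I take $z_j=\xi_j(0)$ for $1\le j\le i-1$. Writing an admissible $p$ as $p(t)=\prod_{j=1}^{i-1}(t-\xi_j(0))^2 q(t)$ and running the same computation, $\int_{-1}^{1}(t-\xi_i(0))p(t)w(t)\,dt=\sum_{k=1}^n\gamma_k(\xi_k(0)-\xi_i(0))p(\xi_k(0))$; now the terms with $k\le i-1$ vanish (the factor $(\xi_k(0)-\xi_j(0))^2$ with $j=k$), the term $k=i$ vanishes, and for $k\ge i+1$ we have $\gamma_k>0$, $\xi_k(0)>\xi_i(0)$ and $p(\xi_k(0))=\prod_{j<i}(\xi_k(0)-\xi_j(0))^2 q(\xi_k(0))\ge 0$, so the sum is $\ge 0$. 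Thus every admissible $p$ has mean $\ge\xi_i(0)$, with equality for $p(t)=\prod_{j=1}^{i-1}(t-\xi_j(0))^2\prod_{k=i+1}^n(t-\xi_k(0))^2$, because then $(t-\xi_i(0))p(t)$ vanishes at all the nodes $\xi_k(0)$. Hence the inner minimum at $z_j=\xi_j(0)$ equals $\xi_i(0)$, which combined with the previous paragraph gives $\xi_i(0)=\max_{z}\min_{p}(\cdots)$, the outer maximum being attained at $z_j=\xi_j(0)$ and the corresponding inner minimum at the polynomial just displayed. (If one wants the inner infimum to be a genuine minimum for every choice of the $z_j$, this follows from a routine compactness argument after the scale-invariant normalization $\int_{-1}^1 p(t)w(t)\,dt=1$, again using that $\int(\text{nonzero nonnegative polynomial})\,w>0$.)

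The only step that is not bookkeeping is guessing the competitor polynomials: once one sees that $p$ should carry double zeros at the remaining Gaussian nodes $\xi_{i+1}(0),\dots,\xi_n(0)$, and that the maximizing configuration is $z_j=\xi_j(0)$, exactness of the Gaussian quadrature on degree $2n-1$ polynomials reduces the sign of $\int(t-\xi_i(0))p(t)w(t)\,dt$ to an obviously signed finite sum. The remaining ingredients — even order of interior zeros of a nonnegative polynomial, positivity of $\int p\,w$ for nonzero $p\in S$, and positivity of the weights $\gamma_k$ — are standard, the last being part of the stated properties of canonical representations.
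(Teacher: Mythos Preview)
Your proof is correct and takes essentially the same approach as the paper's: both use the Gaussian quadrature to reduce the integrals to sums over the nodes, choose the same test polynomial $\prod_{j<i}(t-z_j)^2\prod_{k>i}(t-\xi_k(0))^2$ for the upper bound, and take $z_j=\xi_j(0)$ for the lower bound. Your preliminary factorization $p=\prod_{j<i}(t-z_j)^2 q$ is correct but unnecessary, since $p(\xi_k(0))\ge 0$ for $k>i$ follows directly from $p\in S$.
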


\begin{proof}
For brevity, we denote in this proof the zeros of $\varphi$ by
$\xi_i$ instead of $\xi_i(0)$.

Recall from Section~\ref{sec:background} the notation $\Sigma_0$ for
the quadrature formula of degree $2n-1$ whose nodes are $(\xi_i)$.
Using this quadrature formula to evaluate the integrals we obtain
for every $p\in S$ that
\begin{equation}\label{eq:Gaussian_quad_use}
\frac{\int_{-1}^1tp(t)w(t)dt}{\int_{-1}^1p(t)w(t)dt}=\frac{\sum_{j=1}^n\lambda(\xi_j)\xi_jp(\xi_j)}{\sum_{j=1}^n\lambda(\xi_j)p(\xi_j)}.
\end{equation}

Fix $1\le i\le n$. We prove \eqref{eq:xi_i_max_min} by establishing
that the right-hand side is both an upper and lower bound for
$\xi_i$. First, let $-1<z_1<\ldots<z_{i-1}<1$ and consider the
polynomial
\begin{equation*}
  q(t):=\prod_{j=1}^{i-1}(t-z_j)^2\cdot\prod_{j=i+1}^n(t-\xi_j)^2.
\end{equation*}
This polynomial belongs to $S$ and vanishes at $z_1,\ldots,z_{i-1}$.
Thus, using \eqref{eq:Gaussian_quad_use},
$$\min_{\substack{p\in S\\ p(z_1)=\ldots=p(z_{i-1})=0}}\frac{\int_{-1}^1tp(t)w(t)dt}{\int_{-1}^1p(t)w(t)dt}=\min_{\substack{p\in S\\ p(z_1)=\ldots=p(z_{i-1})=0}}\frac{\sum_{j=1}^n\lambda(\xi_j)\xi_jp(\xi_j)}{\sum_{j=1}^n\lambda(\xi_j)p(\xi_j)}\leq\frac{\sum_{j=1}^n\lambda(\xi_j)\xi_jq(\xi_j)}{\sum_{j=1}^n\lambda(\xi_j)q(\xi_j)}.$$
Since $q$ vanishes at $\xi_{i+1},\ldots, \xi_n$ and
$\xi_1<\cdots<\xi_i$, it follows that
$$\min_{\substack{p\in S\\ p(z_1)=\ldots=p(z_{i-1})=0}}\frac{\int_{-1}^1tp(t)w(t)dt}{\int_{-1}^1p(t)w(t)dt}\le\frac{\sum_{j=1}^i\lambda(\xi_j)\xi_j q(\xi_j)}{\sum_{j=1}^i\lambda(\xi_j)q(\xi_j)}\leq\xi_i.$$
Since the $(z_j)$ are arbitrary, we conclude that
\begin{equation}\label{eq:xi_i_formula_upper_bound}
  \xi_i\ge \max_{-1<z_1<\ldots<z_{i-1}<1}\min_{\substack{p\in S\\
p(z_1)=\ldots=p(z_{i-1})=0}}\frac{\int_{-1}^1tp(t)w(t)dt}{\int_{-1}^1p(t)w(t)dt}.
\end{equation}

Second, suppose $p$ is a polynomial in $S$ which vanishes at
$z_1=\xi_1,\ldots,z_{i-1}=\xi_{i-1}$. It follows from
\eqref{eq:Gaussian_quad_use} that
$$\frac{\int_{-1}^1tp(t)w(t)dt}{\int_{-1}^1p(t)w(t)dt}=\frac{\sum_{j=1}^n\lambda(\xi_j)\xi_jp(\xi_j)}{\sum_{j=1}^n\lambda(\xi_j)p(\xi_j)}=\frac{\sum_{j=i}^n\lambda(\xi_j)\xi_jp(\xi_j)}{\sum_{j=i}^n\lambda(\xi_j)p(\xi_j)}\geq\xi_i.$$
Hence,
\begin{equation}\label{eq:xi_i_formula_lower_bound}
\xi_i\le \min_{\substack{p\in S\\
p(\xi_1)=\ldots=p(\xi_{i-1})=0}}\frac{\int_{-1}^1tp(t)w(t)dt}{\int_{-1}^1p(t)w(t)dt}\le
\max_{-1<z_1<\ldots<z_{i-1}<1}\min_{\substack{p\in S\\
p(z_1)=\ldots=p(z_{i-1})=0}}\frac{\int_{-1}^1tp(t)w(t)dt}{\int_{-1}^1p(t)w(t)dt}.
\end{equation}
The lemma follows by putting together
\eqref{eq:xi_i_formula_upper_bound} and
\eqref{eq:xi_i_formula_lower_bound}.
\end{proof}

Denote by $u$ the constant weight function, $u\equiv 1$ on $[-1,1]$.
Let
\begin{equation}\label{eq:Legendre_poly_def}
  L_n(t):=\frac{1}{2^n
n!}\frac{d^n}{dt^n}\left[(t^2-1)^n\right]
\end{equation}
be the Legendre polynomial of degree $n$. As is well-known, the
polynomials $(L_n)$ are orthogonal with respect to $u$. By our
notation, $(\xi_i^u(0))$, $1\le i\le n$, are the roots of $L_n$. We
require the following bounds on these roots \cite[Theorem
6.21.2]{Szego},
\begin{equation}\label{eq:Legendre_nodes_bounds}
-\cos\left(\frac{2i-1}{2n+1}\pi\right)\leq\xi_i^u(0)\leq
-\cos\left(\frac{2i}{2n+1}\pi\right),\quad 1\leq i\leq n.
\end{equation}
\begin{cor}
Suppose the weight function $w$ satisfies $0<m\le w\le M$ almost
everywhere. Then for each $1\leq i\leq n$,
\begin{equation}\label{eq:xi_i_distance_endpoint_u}
\frac{m}{M}(1-\xi_i^u(0))\leq1-\xi_i^w(0)\leq\frac{M}{m}(1-\xi_i^u(0))\;\quad\text{and}\;\quad\frac{m}{M}(1+\xi_i^u(0))\leq1+\xi_i^w(0)\leq\frac{M}{m}(1+\xi_i^u(0)).
\end{equation}
Consequently, there exist absolute constants $C,c>0$ such that for
all $1\le i\le n$,
\begin{equation}\label{eq:xi_i_distance_endpoint}
\begin{split}
  c \sqrt{\frac{m}{M}}\cdot\frac{n+1-i}{n}\leq &\sqrt{1-\xi_i^w(0)}\leq
  C\sqrt{\frac{M}{m}}\cdot\frac{n+1-i}{n}\;\quad\text{and}\\
  c\sqrt{\frac{m}{M}}\cdot\frac{i}{n}\leq &\sqrt{1+\xi_i^w(0)}\leq C\sqrt{\frac{M}{m}}\cdot\frac{i}{n}.
\end{split}
\end{equation}
\end{cor}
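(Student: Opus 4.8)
The plan is to deduce the two-sided comparison \eqref{eq:xi_i_distance_endpoint_u} from the max--min formula of Lemma~\ref{lem:node_max_min_formula}, applied simultaneously to $w$ and to the constant weight $u\equiv1$, and then to obtain \eqref{eq:xi_i_distance_endpoint} by inserting the classical bounds \eqref{eq:Legendre_nodes_bounds} on the Legendre zeros. Write $f_w(z,p):=\frac{\int_{-1}^1 tp(t)w(t)dt}{\int_{-1}^1 p(t)w(t)dt}$, so that Lemma~\ref{lem:node_max_min_formula} reads $\xi_i^w(0)=\max_z\min_p f_w(z,p)$, where the extrema are taken over $-1<z_1<\cdots<z_{i-1}<1$ and over $p\in S$ with $p(z_1)=\cdots=p(z_{i-1})=0$. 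Since adding a constant commutes with $\max$ and $\min$, and negation swaps them, this is equivalent to
$$1-\xi_i^w(0)=\min_z\max_p\frac{\int_{-1}^1(1-t)p(t)w(t)dt}{\int_{-1}^1 p(t)w(t)dt},\qquad 1+\xi_i^w(0)=\max_z\min_p\frac{\int_{-1}^1(1+t)p(t)w(t)dt}{\int_{-1}^1 p(t)w(t)dt}.$$
For every admissible $z$ and every $p\in S$ the functions $(1-t)p(t)$ and $(1+t)p(t)$ are non-negative on $[-1,1]$, so each ratio above is a quotient of integrals of a fixed non-negative function against $w$; hence $m\le w\le M$ gives, termwise,
$$\frac{m}{M}\cdot\frac{\int_{-1}^1(1\mp t)p(t)dt}{\int_{-1}^1 p(t)dt}\ \le\ \frac{\int_{-1}^1(1\mp t)p(t)w(t)dt}{\int_{-1}^1 p(t)w(t)dt}\ \le\ \frac{M}{m}\cdot\frac{\int_{-1}^1(1\mp t)p(t)dt}{\int_{-1}^1 p(t)dt}.$$
Applying $\min_z$ then $\max_p$ (respectively $\max_z$ then $\min_p$) — each monotone and commuting with multiplication by a positive constant — and recognizing, via Lemma~\ref{lem:node_max_min_formula} applied to $u$, that $\min_z\max_p\frac{\int_{-1}^1(1-t)p(t)dt}{\int_{-1}^1 p(t)dt}=1-\xi_i^u(0)$ and $\max_z\min_p\frac{\int_{-1}^1(1+t)p(t)dt}{\int_{-1}^1 p(t)dt}=1+\xi_i^u(0)$, yields \eqref{eq:xi_i_distance_endpoint_u}.

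For \eqref{eq:xi_i_distance_endpoint} I would combine \eqref{eq:xi_i_distance_endpoint_u} with \eqref{eq:Legendre_nodes_bounds}. Using $1+\cos\theta=2\cos^2(\theta/2)$ and $1-\cos\theta=2\sin^2(\theta/2)$, the Legendre bounds translate into
$$2\cos^2\!\Big(\tfrac{i\pi}{2n+1}\Big)\le 1-\xi_i^u(0)\le 2\cos^2\!\Big(\tfrac{(2i-1)\pi}{2(2n+1)}\Big),\qquad 2\sin^2\!\Big(\tfrac{(2i-1)\pi}{2(2n+1)}\Big)\le 1+\xi_i^u(0)\le 2\sin^2\!\Big(\tfrac{i\pi}{2n+1}\Big).$$
Rewriting the cosines as sines of the complementary angle and using $\tfrac{2}{\pi}t\le\sin t\le t$ for $0\le t\le\pi/2$, together with the fact that for $1\le i\le n$ the angles that occur are comparable, up to absolute constants, to $\tfrac{n+1-i}{n}$ and $\tfrac{i}{n}$ respectively, one obtains absolute constants $c,C>0$ with $c\,\tfrac{n+1-i}{n}\le\sqrt{1-\xi_i^u(0)}\le C\,\tfrac{n+1-i}{n}$ and $c\,\tfrac{i}{n}\le\sqrt{1+\xi_i^u(0)}\le C\,\tfrac{i}{n}$. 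Taking square roots in \eqref{eq:xi_i_distance_endpoint_u} and substituting these estimates gives \eqref{eq:xi_i_distance_endpoint}.

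The argument is essentially soft, so I do not expect a genuine obstacle. The only points that require care are getting the direction of the min--max manipulation right — so that $1-\xi_i$ becomes a $\min$ over $z$ of a $\max$ over $p$, while $1+\xi_i$ becomes a $\max$ over $z$ of a $\min$ over $p$ — and observing that the integrands $(1\mp t)p(t)$ are non-negative on $[-1,1]$, which is exactly what allows the crude bound $m\le w\le M$ to be applied inside the extremal problem rather than only after integration.
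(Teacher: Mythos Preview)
Your argument is correct and follows essentially the same route as the paper's proof: both apply Lemma~\ref{lem:node_max_min_formula} to $w$ and to $u$, rewrite the formula as a min--max for $1-\xi_i$, use non-negativity of $(1-t)p(t)$ together with $m\le w\le M$ to compare the ratios, and then invoke the Legendre root bounds~\eqref{eq:Legendre_nodes_bounds}. Your write-up is simply more explicit about the second set of inequalities and the trigonometric details in deducing~\eqref{eq:xi_i_distance_endpoint}.
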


\begin{proof} Applying Lemma~\ref{lem:node_max_min_formula} twice, once for $w$ and once for $u$, we
obtain
\begin{align*}
  1-\xi_i^w(0)&=\min_{-1<z_1<\ldots<z_{i-1}<1}\max_{\substack{p\in S\\ p(z_1)=\ldots=p(z_{i-1})=0}}\frac{\int_{-1}^1(1-t)p(t)w(t)dt}{\int_{-1}^1p(t)w(t)dt}\geq\\
  &\ge\frac{m}{M}\min_{-1<z_1<\ldots<z_{i-1}<1}\max_{\substack{p\in S\\
  p(z_1)=\ldots=p(z_{i-1})=0}}\frac{\int_{-1}^1(1-t)p(t)dt}{\int_{-1}^1p(t)dt}=\frac{m}{M}(1-\xi_i^u(0)).
\end{align*}
The proof of the other inequalities in
\eqref{eq:xi_i_distance_endpoint_u} is similar. Inequality
\eqref{eq:xi_i_distance_endpoint} now follows from
\eqref{eq:Legendre_nodes_bounds} and
\eqref{eq:xi_i_distance_endpoint_u}.
\end{proof}

\begin{propos}\label{Gauss}
Suppose the weight function $w$ satisfies $0<m\le w\le M$ almost
everywhere. Let $x:=\xi_i(a)$ for some $-\infty\le a\le \infty$ and
$1\le i\le n$. Then there exist absolute constants $C,c>0$ such that
\begin{equation*}
  \sqrt{1+x}\leq C\sqrt{\frac{M}{m}}\cdot\frac{i}{n}\quad\text{and}\quad\sqrt{1-x}\leq C\sqrt{\frac{M}{m}}\cdot\frac{n+1-i}{n}.
\end{equation*}
In addition, if $x\le \xi_n(0)$ then
\begin{equation}\label{eq:root_distance_from_1}
  \sqrt{1-x}\ge c\sqrt{\frac{m}{M}}\cdot\frac{n+1-i}{n}.
\end{equation}
Similarly, if $x\ge \xi_1(0)$ then
\begin{equation*}
  \sqrt{1+x}\ge c\sqrt{\frac{m}{M}}\cdot\frac{i}{n}.
\end{equation*}
\end{propos}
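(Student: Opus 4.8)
The plan is to deduce the proposition from the estimate \eqref{eq:xi_i_distance_endpoint} for the Gaussian nodes $\xi_i(0)$ (denoted $\xi_i^w(0)$ there), using the way the roots $\xi_i(a)$ interlace with the roots $\eta_j$ of $(1-t^2)\psi$. First I would record, directly from \eqref{eq:xi_a_increases}, the two-sided bound $\eta_{i-1}\le\xi_i(a)\le\eta_i$, valid for every $-\infty\le a\le\infty$ and every $1\le i\le n$. Applying this with $a=0$ gives $\eta_{i-1}\le\xi_i(0)\le\eta_i$, and putting consecutive instances together yields the chain $\xi_{i-1}(0)\le\eta_{i-1}\le\xi_i(a)\le\eta_i\le\xi_{i+1}(0)$ whenever the indices appearing lie in $\{1,\dots,n\}$.

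For the upper bounds, writing $x=\xi_i(a)$: when $i\ge2$ the chain gives $1-x\le1-\eta_{i-1}\le1-\xi_{i-1}(0)$, so \eqref{eq:xi_i_distance_endpoint} produces $\sqrt{1-x}\le C\sqrt{M/m}\,\frac{n+2-i}{n}\le 2C\sqrt{M/m}\,\frac{n+1-i}{n}$, where the last step uses $n+2-i\le2(n+1-i)$ for $i\le n$; the case $i=1$ is immediate since then $1-x\le2$ and $\frac{n+1-i}{n}=1$. Symmetrically, from $1+x\le1+\eta_i\le1+\xi_{i+1}(0)$ (for $i\le n-1$) together with $i+1\le2i$ one obtains $\sqrt{1+x}\le 2C\sqrt{M/m}\,\frac in$, with $i=n$ handled by $1+x\le2$.

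For the lower bounds I would assume $x=\xi_i(a)\le\xi_n(0)$. If $i\le n-1$ then the chain gives $x\le\eta_i\le\xi_{i+1}(0)$, hence $1-x\ge1-\xi_{i+1}(0)$, and \eqref{eq:xi_i_distance_endpoint} yields $\sqrt{1-x}\ge c\sqrt{m/M}\,\frac{n-i}{n}\ge\frac c2\sqrt{m/M}\,\frac{n+1-i}{n}$ (using $n-i\ge1$). If $i=n$, the hypothesis directly gives $1-x\ge1-\xi_n(0)$ and \eqref{eq:xi_i_distance_endpoint} closes the case. The final inequality, under the hypothesis $x\ge\xi_1(0)$, is obtained in the same way, now using $x\ge\eta_{i-1}\ge\xi_{i-1}(0)$ for $i\ge2$ and the hypothesis itself for $i=1$; alternatively it follows from the preceding inequality applied to the reflected weight $t\mapsto w(-t)$.

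I do not expect any genuine obstacle: the entire content sits in the node estimate \eqref{eq:xi_i_distance_endpoint}, and the only care needed is the bookkeeping of the index shifts $i\mapsto i\pm1$ (which are absorbed into the absolute constants via $n+2-i\le2(n+1-i)$ and $i+1\le2i$) and the separate treatment of the boundary indices $i=1$ and $i=n$ in the lower bounds — which is precisely why those two inequalities carry the hypotheses $x\ge\xi_1(0)$ and $x\le\xi_n(0)$.
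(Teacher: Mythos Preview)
Your proposal is correct and follows essentially the same route as the paper: both deduce the proposition from the Gaussian-node estimate \eqref{eq:xi_i_distance_endpoint} via the interlacing encoded in \eqref{eq:xi_a_increases}. The only cosmetic difference is that the paper splits according to the sign of $a$ (obtaining $\xi_i(0)\le\xi_i(a)\le\xi_{i+1}(0)$ when $a\ge0$ and $\xi_{i-1}(0)\le\xi_i(a)\le\xi_i(0)$ when $a\le0$), whereas you pass through the $\eta_j$'s to get the combined bound $\xi_{i-1}(0)\le\xi_i(a)\le\xi_{i+1}(0)$; the resulting index shifts are in either case absorbed into the absolute constants exactly as you describe.
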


We remark that the condition $x\le \xi_n(0)$ is violated if and only
if $i=n$ and $a>0$. Indeed, one cannot expect the estimate
\eqref{eq:root_distance_from_1} to hold uniformly when $x>\xi_n(0)$
since if $i=n$ then $x\to 1$ as $a\to\infty$. A similar remark holds
for the condition $x\ge\xi_1(0)$.
\begin{proof}
  The proposition follows from \eqref{eq:xi_i_distance_endpoint}
  by using \eqref{eq:xi_a_increases} to note that if $a\ge 0$ then $\xi_i(0)\le \xi_i(a)\le
  \xi_{i+1}(0)$ and if $a\le 0$ then $\xi_{i-1}(0)\le \xi_i(a)\le
  \xi_i(0)$.
\end{proof}

\subsubsection{Weights and distances between nodes}
In this section we give upper bounds on the weights and the
inter-node distance in the quadrature formulas $\Sigma_x$. Recall
that $S_x$ is the set of nodes of $\Sigma_x$.

\begin{lemma}\label{lambda1} Suppose the weight function $w$ satisfies $0<m\le w\le M$ almost
everywhere. Then there exists an absolute constant $C>0$ such that
for every $-1<x<1$,
$$\lambda_x(-1)\leq C\frac{M^2}{m}\cdot\frac{1}{n^2}\quad\text{and}\quad\lambda_x(1)\leq C\frac{M^2}{m}\cdot\frac{1}{n^2}.$$
\end{lemma}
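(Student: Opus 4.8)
The plan is to bound $\lambda_x(-1)$ by exhibiting a single low-degree polynomial that is nonnegative on $[-1,1]$, equals $1$ at $-1$, and has integral $O(1/n^2)$, and then to use that \emph{every} canonical representation — not merely the one achieving the extrema in \eqref{eq:pi_underline_pi_lambda_def} — is a positive-weight quadrature formula exact in degree $2n-1$. Precisely, I would first note that if $-1\notin S_x$ then $\lambda_x(-1)=0$ and there is nothing to prove, while if $-1\in S_x$, then for \emph{any} polynomial $q$ with $\deg q\le 2n-1$, $q\ge 0$ on $[-1,1]$ and $q(-1)=1$, evaluating the quadrature $\Sigma_x$ at $q$ and discarding all terms but the (nonnegative) one coming from the node $-1$ gives
\[
  \lambda_x(-1)=\lambda_x(-1)q(-1)\le\sum_{u\in S_x}\lambda_x(u)q(u)=\int_{-1}^1 q(t)w(t)\,dt\le M\int_{-1}^1 q(t)\,dt .
\]
So it suffices to find such a $q$ with $\int_{-1}^1 q\le C/n^2$. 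The bound for $\lambda_x(1)$ then follows by applying the same argument to $q(-t)$, or by applying the $\lambda_x(-1)$ estimate to the reversed weight $\tilde w(t):=w(-t)$.

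For the polynomial $q$ I would take the square of the normalized Christoffel--Darboux kernel of the \emph{constant} weight $u\equiv 1$ at the point $-1$. Let $\ell_0,\ell_1,\dots$ be the orthonormal Legendre polynomials (the $L^2[-1,1]$-normalizations of the polynomials in \eqref{eq:Legendre_poly_def}), set $K_n(s,t):=\sum_{k=0}^{n-1}\ell_k(s)\ell_k(t)$, and put $q:=A^2$ with $A:=K_n(\cdot,-1)/K_n(-1,-1)$. Then $\deg q\le 2n-2\le 2n-1$, $q\ge 0$ everywhere, and $q(-1)=A(-1)^2=1$. The reproducing property $\int_{-1}^1 K_n(t,-1)^2\,dt=K_n(-1,-1)$ gives $\int_{-1}^1 q(t)\,dt=1/K_n(-1,-1)$, and since $\ell_k(-1)^2=\tfrac{2k+1}{2}$ we have $K_n(-1,-1)=\sum_{k=0}^{n-1}\tfrac{2k+1}{2}=\tfrac{n^2}{2}$, so $\int_{-1}^1 q=2/n^2$. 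Feeding this into the display above yields $\lambda_x(-1)\le 2M/n^2$, and likewise $\lambda_x(1)\le 2M/n^2$; since $M\ge m$ these are both at most $2M^2/(mn^2)$, which proves the lemma with $C=2$ (in fact with the stronger bound $2M/n^2$, with no dependence on $m$).

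I do not anticipate a serious obstacle; the one thing to get right is the observation in the first paragraph that the test polynomial need not vanish at the other nodes of $\Sigma_x$, only be globally nonnegative with $q(-1)=1$. This is what makes the argument uniform over all four types of canonical representation at once, and in particular sidesteps the fact that for a lower canonical representation (nodes $-1,\xi_1(a),\dots,\xi_n(a)$) there is \emph{no} polynomial of degree $\le 2n-1$ that is nonnegative on $[-1,1]$ and vanishes at every interior node $\xi_i(a)$, so the naive ``interpolate at all the other nodes'' choice of $q$ is unavailable. Everything else — the reproducing-kernel identity and $\ell_k(-1)^2=\tfrac{2k+1}{2}$ — is standard, so the remaining work is essentially bookkeeping.
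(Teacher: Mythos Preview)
Your proof is correct and in fact yields the sharper bound $\lambda_x(\pm 1)\le 2M/n^2$, with no dependence on $m$.

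The route, however, is genuinely different from the paper's. The paper fixes the canonical representation $\Sigma_x$ and builds an $x$-dependent test polynomial that \emph{vanishes} at all interior nodes $\xi_i(a)$ (squaring $n-1$ of the linear factors and leaving one linear, so the degree is exactly $2n-1$); this turns the quadrature identity into an \emph{equality} $\int_{-1}^1 q\,w=\lambda_x(1)$, after which the integral is bounded by localizing to $[\xi_n(a),1]$ and invoking Proposition~\ref{Gauss} for $1-\xi_n(a)$. It is this last step that brings in the factor $M/m$. Your approach instead chooses a fixed, $x$-independent nonnegative polynomial (the squared Legendre Christoffel--Darboux kernel at the endpoint) and uses the quadrature only as an \emph{inequality}, discarding the nonnegative contributions of the other nodes. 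What you gain is a cleaner, sharper constant and a proof that treats all canonical representations uniformly without ever needing to locate the nodes; what the paper's argument gains is that it stays entirely within the machinery (Proposition~\ref{Gauss}) already developed, and illustrates the exact-identity technique used repeatedly later. One small remark on your closing paragraph: the paper's polynomial is not required to be nonnegative on $[-1,1]$ (indeed it changes sign), so the obstruction you mention to ``interpolate at all the other nodes'' does not actually bite for their method; it only explains why \emph{your} nonnegativity-based strategy cannot take that form.
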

\begin{proof} It suffices to prove the inequality for $\lambda_x(1)$
as the inequality for $\lambda_x(-1)$ follows from it by considering
the reversed weight function $\tilde{w}(t):=w(-t)$. Let $1\le r\le
n$ and $-\infty\leq a<\infty$ be the unique numbers for which
$x=\xi_r(a)$. If $0\leq a<\infty$ then $1\notin S_x$ and hence
$\lambda_x(1)=0$ and there is nothing to prove. Suppose $-\infty\leq
a<0$. Define the following polynomial
$$q(t):=\left[\prod_{i=1}^{n-1}\left(\frac{t-\xi_i(a)}{1-\xi_i(a)}\right)^2\right]\frac{t-\xi_n(a)}{1-\xi_n(a)}.$$
Observe that $\deg q=2n-1$ and $q$ vanishes on $\xi_i(a)$ for all
$i$. Now, on the one hand, we may use the quadrature formula
$\Sigma_x$ to obtain that
\begin{equation}\label{eq:q_integral_first_time}
\int_{-1}^1 q(t)w(t)dt=\sum_{u\in S_x}\lambda_x(u)q(u)=\lambda_x(1).
\end{equation}
On the other hand, since $q\le 0$ on $[-1, \xi_n(a)]$ and $q\le 1$
on $(\xi_n(a),1]$ it follows that
\begin{equation} \label{eq:q_integral_second_time}
\int_{-1}^1 q(t)w(t)dt\leq \left(1-\xi_n(a)\right)M.
\end{equation}
The lemma follows by putting \eqref{eq:q_integral_first_time} and
\eqref{eq:q_integral_second_time} together with the fact that
$1-\xi_n(a)\leq C\frac{M}{m}\cdot\frac{1}{n^2}$ by Proposition
\ref{Gauss}.
\end{proof}

\begin{lemma}\label{lambda} Suppose the weight function $w$ satisfies $w\le M$ almost
everywhere. Then there exists an absolute constant $C>0$ such that
\begin{equation}\label{eq:Christoffel_function_bounds}
\lambda(x)\leq \frac{C
M}{n}\max\left\{\sqrt{1-x^2},\frac{1}{n}\right\},\quad -1<x<1.
\end{equation}
\end{lemma}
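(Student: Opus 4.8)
The plan is to reduce the estimate to the construction of one well‑chosen polynomial which is then evaluated against the quadrature formula $\Sigma_x$. Fix $-1<x<1$; recall that $\Sigma_x$ is a quadrature formula of degree $2n-1$ with positive weights, that its node set $S_x$ is contained in $[-1,1]$ and contains $x$, and that $\lambda(x)=\lambda_x(x)$. Consequently, for any polynomial $q$ with $\deg q\le 2n-1$ that is nonnegative on $[-1,1]$ and satisfies $q(x)=1$,
\begin{equation*}
 \lambda(x)=\lambda_x(x)\,q(x)\le\sum_{u\in S_x}\lambda_x(u)\,q(u)=\int_{-1}^1 q(t)\,w(t)\,dt\le M\int_{-1}^1 q(t)\,dt,
\end{equation*}
where the first inequality uses positivity of the weights together with $q\ge 0$, the middle equality uses exactness of $\Sigma_x$ on polynomials of degree at most $2n-1$, and the last uses $w\le M$. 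Thus it suffices to produce, for every $-1<x<1$, such a $q$ with $\int_{-1}^1 q(t)\,dt\le\frac{C}{n}\max\{\sqrt{1-x^2},1/n\}$; this is exactly an upper bound on the Christoffel function of the constant weight $u\equiv 1$, a classical fact, but I indicate a self‑contained construction below.

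Write $x=\cos\phi$ with $\phi\in(0,\pi)$ and let $J_m$ be the normalized Jackson kernel: a nonnegative even trigonometric polynomial of degree $2(m-1)$ with $\int_{-\pi}^{\pi}J_m=2\pi$, with $J_m(0)\ge cm$, and with $J_m(\theta)\le C\min\{m,1/(m^3\theta^4)\}$ for $0<|\theta|\le\pi$ (take $J_m$ proportional to the square of the Fej\'er kernel). Put $m:=n$, set $Q(\theta):=J_m(\theta-\phi)+J_m(\theta+\phi)$, and define $q$ by $q(\cos\theta):=Q(\theta)/Q(\phi)$. Since $Q$ is an even trigonometric polynomial in $\theta$ of degree $2n-2$, $q$ is a genuine polynomial of degree $2n-2\le 2n-1$; it is nonnegative, $q(x)=1$, and $Q(\phi)\ge J_m(0)\ge cn$. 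Equivalently one may take $q(t):=K_n^u(x,t)^2/K_n^u(x,x)^2$, where $K_n^u$ is the Christoffel--Darboux kernel of $u\equiv 1$; then $\int_{-1}^1 q\,dt=1/K_n^u(x,x)$ is precisely the Legendre Christoffel function, and the required bound is standard.

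It remains to bound $\int_{-1}^1 q\,dt=\frac{1}{Q(\phi)}\int_0^{\pi}Q(\theta)\sin\theta\,d\theta$. For the $J_m(\theta-\phi)$ part I would use that $\sin\theta$ is $1$-Lipschitz with $\sin\phi=\sqrt{1-x^2}$, so $\sin\theta\le\sqrt{1-x^2}+|\theta-\phi|$, combined with $\int_0^{\pi}J_m(\theta-\phi)\,d\theta\le 2\pi$ and $\int_{-\pi}^{\pi}J_m(\psi)|\psi|\,d\psi\le C/m$ (split into the peak $|\psi|\le 1/m$ and the $\psi^{-4}$ tail). For the $J_m(\theta+\phi)$ part one uses that on $[0,\pi]$, $\sin\theta\le\min\{\theta,\pi-\theta\}\le\operatorname{dist}(\theta+\phi,2\pi\mathbb{Z})$, which after the substitution $\psi=\theta+\phi$ reduces the contribution to $2\int_0^{\pi}J_m(\psi)\psi\,d\psi\le C/m$. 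Adding these, $\int_0^{\pi}Q(\theta)\sin\theta\,d\theta\le C\sqrt{1-x^2}+C/m$, whence $\int_{-1}^1 q\,dt\le\frac{C}{n}\sqrt{1-x^2}+\frac{C}{n^2}\le\frac{C}{n}\max\{\sqrt{1-x^2},1/n\}$, completing the proof. The only genuinely delicate point is this last integral estimate when $x$ is close to $\pm 1$, where one must check that the periodic image of the kernel does not damage the bound --- a bookkeeping nuisance that the reformulation via the Christoffel function of $u$ avoids by appealing to the classical estimate directly.
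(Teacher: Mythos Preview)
Your argument is correct. The reduction step---bounding $\lambda(x)$ by $M\int_{-1}^1 q(t)\,dt$ for any nonnegative test polynomial $q$ of degree $\le 2n-1$ with $q(x)=1$---is exactly what the paper does (it phrases it as $\lambda^w(x)\le M\lambda^u(x)$ via the extremal characterization of the Christoffel function). The Jackson-kernel estimates you sketch are standard and the bookkeeping you outline, including the treatment of the reflected term $J_m(\theta+\phi)$, goes through.

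Where you and the paper diverge is in how $\lambda^u(x)$ is bounded. You exhibit an explicit test polynomial (Jackson kernel, or equivalently the squared Christoffel--Darboux kernel), which is the classical harmonic-analysis route and is completely self-contained. The paper instead bounds $\lambda^u(x)$ using its own machinery: it invokes the Chebyshev--Markov--Stieltjes inequalities \eqref{eq:Chebyshev_Markov_Stieltjes_intro} together with the Legendre node estimates \eqref{eq:Legendre_nodes_bounds} and Proposition~\ref{Gauss}, splitting into the cases $i\in\{1,2\}$, $i\in\{n-1,n\}$, and $2<i<n-1$. Your approach has the advantage of not depending on earlier results in the paper and of giving the bound for $\lambda^u$ directly; the paper's approach illustrates that the CMS inequalities already contain this information and avoids introducing auxiliary kernels. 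Your parenthetical remark that one can simply cite the known Legendre Christoffel-function estimate is also perfectly acceptable and is in spirit closest to what the paper actually needs.
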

We remark that this bound is not sharp solely under the condition
that $w\le M$ almost everywhere. For instance, for the Jacobi weight
$w(x)=(1-x)^{\alpha}(1+x)^{\beta}$ with $0<\alpha,\beta\le1/2$ we have that
$\lambda(x)$ is of order $n^{-(2+2\max\{\alpha,\beta\})}$ near one
of the endpoints of the interval \cite[(15.3.1), (4.21.7) and
Theorem 8.21.13]{Szego}. However, the bound is sharp up to the value
of the constant if one imposes some additional assumptions on $w$
(in particular, in the cases of interest in our main theorems), see
Corollary \ref{cor:lambda_lower}.

\begin{proof} As in the previous section, we denote by
$u$ the constant weight function, $u\equiv 1$ on $[-1,1]$. Fix
$-1<x<1$. Let $S$ be the set of all polynomials $f$ of degree $\leq
2n-1$ that are non-negative on $[-1,1]$ and satisfy $f(x)=1$. It is
well known that $\lambda^w(x)=\min_{f\in S}\int_{-1}^1f(t)w(t)dt$
(see \cite[Chapter II, section 4]{Karlin-Studden}). Therefore
$$\lambda^w(x)=\min_{f\in S}\int_{-1}^1f(t)w(t)dt\leq M\min_{f\in S}\int_{-1}^1f(t)dt=M\lambda^u(x).$$
Thus it suffices to prove \eqref{eq:Christoffel_function_bounds} for
the weight function $u$. To this end, let $1\le i\le n$ and
$-\infty\leq a<\infty$ be the unique numbers for which
$x=\xi^u_i(a)$. We may assume without loss of generality that $n\ge
3$ since otherwise the lemma is trivial. We consider separately
three cases, in all of which we rely on the
Chebyshev-Markov-Stieltjes inequalities
\eqref{eq:Chebyshev_Markov_Stieltjes_intro} and
Proposition~\ref{Gauss}.
\begin{enumerate}
  \item If $i=1,2$,
\begin{equation*}
  \lambda^u(x)\leq\underline{\pi}^u(\xi^u_{3}(a))\leq\int_{-1}^{\xi^u_{3}(a)}dt=\xi^u_{3}(a)+1\leq
  \frac{C}{n^2}.
\end{equation*}
  \item If $i=n-1,n$,
  \begin{equation*}
    \lambda^u(x)\leq 2-\pi^u(\xi^u_{n-2}(a))\leq
    2-\int_{-1}^{\xi^u_{n-2}(a)}dt=1-\xi^u_{n-2}(a)\leq\frac{C}{n^2}.
  \end{equation*}
  \item If $2<i<n-1$, by \eqref{eq:Legendre_nodes_bounds} we have
  \begin{align*}
    \lambda^u(x)&=\underline{\pi}^u(\xi^u_{i+1}(a))-\pi^u(\xi^u_{i-1}(a))\leq\int_{-1}^{\xi^u_{i+1}(a)} dt-\int_{-1}^{\xi^u_{i-1}(a)} dt=\xi^u_{i+1}(a)-\xi^u_{i-1}(a)\le\\
    &\leq\xi^u_{i+2}(0)-\xi^u_{i-2}(0)\leq
    \cos\left(\frac{2(i-2)-1}{2n+1}\pi\right)-\cos\left(\frac{2(i+2)}{2n+1}\pi\right)\le\\
    &\le C\frac{i(n+1-i)}{n^3}\le
    \frac{C}{n}\sqrt{1-x^2}.\qedhere
  \end{align*}
\end{enumerate}
\end{proof}

The next lemma deduces upper bounds on the inter-node distance in
the quadrature formulas $\Sigma_x$. In the case $a=0$ (from which
the general case follows using \eqref{eq:xi_a_increases}), these
bounds appear in the work of Erd\H os and Tur\'an
\cite{Erdos-Turan}.
\begin{lemma}\label{"erdos-turan"}
Suppose the weight function $w$ satisfies $0<m\le w\le M$ almost
everywhere. Then there exists an absolute constant $C>0$ such that
for every $1\leq i\leq n-1$ and every $-\infty\leq a\leq \infty$:
\begin{equation}\label{eq:upper_bound_on_separation}
  \xi_{i+1}(a)-\xi_i(a)\leq
C\left(\frac{M}{m}\right)^2\frac{i(n-i)}{n^3}.
\end{equation}
\end{lemma}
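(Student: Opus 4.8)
The plan is to reduce the general statement to the case $a=0$ and then to bound the gap between consecutive zeros of the Legendre polynomial using a comparison argument of the same flavour as the one already used for the distance from the endpoints. First I would invoke \eqref{eq:xi_a_increases}: for any $-\infty\le a\le\infty$ the interlacing $\xi_i(-\infty)=\eta_{i-1}\le\xi_i(a)\le\eta_i=\xi_i(\infty)$ holds, and the $\eta_j$ interlace the $\xi_j^w(0)$ (they are the zeros of $(1-t^2)\psi$, which interlace those of $\varphi$). Hence it suffices to bound $\xi_{i+2}^w(0)-\xi_{i-1}^w(0)$ (with the obvious truncation when $i$ is near $1$ or $n$), so the problem is genuinely about the Gaussian nodes of $w$.

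For the Gaussian nodes the key is a max-min comparison in the spirit of Lemma~\ref{lem:node_max_min_formula}. I would apply that lemma to express both $\xi_i^w(0)$ and $\xi_{i+1}^w(0)$ (or rather the differences $1\pm\xi$) via the ratio $\int t p\, w\,/\int p\, w$ over $p\in S$ with prescribed vanishing; the hypothesis $m\le w\le M$ lets one sandwich each such ratio between the corresponding quantities for the constant weight $u$, losing only a factor $M/m$ each time. Concretely, this yields that $\xi_{i+1}^w(0)-\xi_i^w(0)$ is controlled by $\xi_{i+k}^u(0)-\xi_{i-k}^u(0)$ for a bounded $k$ depending on how many factors of $M/m$ one is willing to absorb — exactly as in case~3 of the proof of Lemma~\ref{lambda}. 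Then \eqref{eq:Legendre_nodes_bounds} gives
\begin{equation*}
\xi_{i+k}^u(0)-\xi_{i-k}^u(0)\le\cos\!\left(\tfrac{2(i-k)-1}{2n+1}\pi\right)-\cos\!\left(\tfrac{2(i+k)}{2n+1}\pi\right)\le C\,\frac{i(n-i)}{n^3},
\end{equation*}
using $\cos\alpha-\cos\beta=2\sin\tfrac{\alpha+\beta}{2}\sin\tfrac{\beta-\alpha}{2}$, the bound $\sin\tfrac{\beta-\alpha}{2}\le\tfrac{\beta-\alpha}{2}\le\tfrac{Ck}{n}$, and $\sin\tfrac{\alpha+\beta}{2}\le C\min\{i,n-i\}/n$ together with $\sin\tfrac{\alpha+\beta}{2}\le 1$; combining these gives the product estimate $\lesssim i(n-i)/n^3$. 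Tracking the power of $M/m$ through the $k$ comparison steps produces the exponent $2$ in \eqref{eq:upper_bound_on_separation}.

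The main obstacle I anticipate is the bookkeeping near the endpoints: when $i\le 2$ or $i\ge n-2$ the shifted index $i\pm k$ falls outside $\{1,\dots,n\}$, so the clean interlacing $\xi_{i+1}^w(0)-\xi_i^w(0)\le\xi_{i+k}^u(0)-\xi_{i-k}^u(0)$ must be replaced by an argument using that $\xi_1^w(0)+1$ and $1-\xi_n^w(0)$ are themselves $O((M/m)\,n^{-2})$ by the Corollary following Lemma~\ref{lem:node_max_min_formula} (equation~\eqref{eq:xi_i_distance_endpoint_u}), which matches $i(n-i)/n^3$ when $i$ or $n-i$ is bounded. A second, minor point is that one must take a little care that the polynomials used to extract the lower/upper bounds in the max-min formula still satisfy the required vanishing constraints after the index shift; choosing them as products of squared linear factors at the appropriate Legendre nodes (as in the proof of Lemma~\ref{lem:node_max_min_formula}) handles this. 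With these two caveats dealt with, the estimate \eqref{eq:upper_bound_on_separation} follows.
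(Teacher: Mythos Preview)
Your reduction to the Gaussian nodes via \eqref{eq:xi_a_increases} is fine, but the central step---extracting a gap bound $\xi_{i+1}^w(0)-\xi_i^w(0)\lesssim\xi_{i+k}^u(0)-\xi_{i-k}^u(0)$ for \emph{bounded} $k$ from the max--min formula---does not work. Lemma~\ref{lem:node_max_min_formula} and its corollary only control each individual $1\pm\xi_i^w(0)$ up to a multiplicative factor $M/m$ against $1\pm\xi_i^u(0)$. Subtracting two such estimates gives
\[
\xi_{i+1}^w(0)-\xi_i^w(0)\le\frac{M}{m}\bigl(1-\xi_i^u(0)\bigr)-\frac{m}{M}\bigl(1-\xi_{i+1}^u(0)\bigr),
\]
which is of order $\bigl(\tfrac{M}{m}-\tfrac{m}{M}\bigr)\tfrac{(n-i)^2}{n^2}$, far larger than the target $\tfrac{i(n-i)}{n^3}$. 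Equivalently, to force $\xi_i^w(0)\le\xi_{i+k}^u(0)$ one would need $(n-i-k)^2\lesssim\tfrac{m}{M}(n-i)^2$, i.e.\ $k\gtrsim(1-\sqrt{m/M})(n-i)$, which is \emph{not} bounded. Your reference to case~3 of Lemma~\ref{lambda} does not help either: that computation takes place entirely for the constant weight $u$ and bounds $\lambda^u(x)$, not a gap of zeros for a general $w$.

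The paper's argument bypasses all of this and works directly for arbitrary $a$, without reducing to Gaussian nodes. One simply applies the Chebyshev--Markov--Stieltjes inequalities \eqref{eq:Chebyshev_Markov_Stieltjes_intro} to the canonical representation $\Sigma_{\xi_i(a)}$:
\[
m\bigl(\xi_{i+1}(a)-\xi_i(a)\bigr)\le\int_{\xi_i(a)}^{\xi_{i+1}(a)}w(t)\,dt\le\pi(\xi_{i+1}(a))-\underline{\pi}(\xi_i(a))=\lambda(\xi_i(a))+\lambda(\xi_{i+1}(a)),
\]
and then invokes Lemma~\ref{lambda} (which gives $\lambda(x)\le\tfrac{CM}{n}\max\{\sqrt{1-x^2},\tfrac{1}{n}\}$) together with Proposition~\ref{Gauss}. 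This is a two-line proof and yields the exponent $(M/m)^2$ directly: one factor from dividing by $m$, one from the $M$ in Lemma~\ref{lambda} combined with the $M/m$ coming from Proposition~\ref{Gauss} when converting $\sqrt{1-\xi_i(a)^2}$ into $i(n-i)/n^2$.
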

\begin{proof}
Assume first that $\xi_i(a),\xi_{i+1}(a)\in(-1,1)$. The
Chebyshev-Markov-Stieltjes inequalities
\eqref{eq:Chebyshev_Markov_Stieltjes_intro} imply that
$$m\left[\xi_{i+1}(a)-\xi_i(a)\right]\leq\int_{\xi_i(a)}^{\xi_{i+1}(a)}w(t)dt=\int_{-1}^{\xi_{i+1}(a)}w(t)dt-\int_{-1}^{\xi_i(a)}w(t)dt\leq$$
$$\leq\pi\left(\xi_{i+1}(a)\right)-\underline{\pi}\left(\xi_i(a)\right)=\lambda\left(\xi_i(a)\right)+\lambda\left(\xi_{i+1}(a)\right)$$
and the result follows from Lemma \ref{lambda} and Proposition
\ref{Gauss}. Second, if either $\xi_i(a)$ or $\xi_{i+1}(a)$ are in
$\{-1,1\}$ the result follows from \eqref{eq:xi_a_increases}.
\end{proof}
We do not know if the bound \eqref{eq:upper_bound_on_separation} is
sharp under the conditions of Lemma~\ref{"erdos-turan"}, however,
the bound is sharp up to a constant depending only on $w$ if $w$
satisfies some additional assumptions, e.g., the conditions of
Theorem~\ref{thm:Badkov_thm} with $\alpha=0$. This may be deduced in
two different ways from arguments in this paper. First, it follows
from Proposition~\ref{propos:root_sep} below, as follows. If $a\ge 0$ we may take $b_+=0$
and the limit $b_-\to-\infty$ in \eqref{eq:separation_negative} and
use \eqref{eq:xi_a_increases}. If $a\le 0$ we may similarly take
$a_- = 0$ and the limit $a_+\to\infty$ in
\eqref{eq:separation_positive} and use \eqref{eq:xi_a_increases}.
Second, it may be deduced from Lemma \ref{segment}.

\section{Separation of nodes of quadrature formulas}\label{sec:root_separation}
Part of the motivation for this paper came out of the works
\cite{Kuijlaars1} and \cite{Kuijlaars2} of Kuijlaars. There, a lower
bound for $\pi'$ is established for the special case of a Jacobi
weight function $w(x)=(1-x)^{\alpha}(1+x)^{\beta}$ with
$\alpha,\beta\ge0$ (\cite[Proposition 7.2]{Kuijlaars1} for the
ultraspherical case $\alpha=\beta$, \cite[Lemma 5.1]{Kuijlaars2} for
the general case). Another ingredient appearing in those works are
results (\cite[Proposition 7.3]{Kuijlaars1}, \cite[Lemma
5.2]{Kuijlaars2}) bounding the distance between nodes of different
canonical representations. In this short section, which is not used
in the proofs of our main theorems, we observe that such a result
holds also for the more general weight functions which we consider.

\begin{propos}\label{propos:root_sep} Suppose $w$ satisfies the conditions of
Theorem~\ref{thm:Badkov_thm} with $\alpha=0$. Then there exists a
constant $c(w)>0$ such that the following holds.
\begin{enumerate}
\item For every $1\leq
i\leq n$ and $0\le a_-<a_+<\infty$,
\begin{equation}\label{eq:separation_positive}
\xi_i(a_+)-\xi_i(a_-)\geq
c(w)\frac{(n+1-i)^2(a_+-a_-)}{n^3\left(1+\frac{n+1-i}{i}a_-\right)\left(1+\frac{n+1-i}{i}a_+\right)}.
\end{equation}
\item For every $1\leq
i\leq n$ and $-\infty<b_-<b_+\le 0$,
\begin{equation}\label{eq:separation_negative}
\xi_i(b_+)-\xi_i(b_-)\geq
c(w)\frac{i^2(b_+-b_-)}{n^3\left(1+\frac{i}{n+1-i}(-b_-)\right)\left(1+\frac{i}{n+1-i}(-b_+)\right)}.
\end{equation}
\end{enumerate}
\end{propos}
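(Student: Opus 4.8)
The plan is to reduce both parts to a single quantitative statement about how the roots $\xi_i(a)$ move as $a$ varies, and to control this motion using the explicit formula $a = \varphi(\xi_i(a))/\bigl((1-\sgn(a)\xi_i(a))\psi(\xi_i(a))\bigr)$ together with the polynomial estimates of Section~\ref{sec:preliminary_estimates}. I will treat part (1) in detail; part (2) follows by applying part (1) to the reversed weight $\tilde w(t) := w(-t)$, since $\xi_i^{\tilde w}(a) = -\xi_{n+1-i}^w(-a)$, which swaps the roles of $i$ and $n+1-i$ and of positive and negative $a$ (note $\tilde w$ also satisfies the hypotheses of Theorem~\ref{thm:Badkov_thm} with $\alpha=0$).

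For part (1), fix $1\le i\le n$ and $0\le a_-<a_+<\infty$, and write $x_\pm := \xi_i(a_\pm)$, so $x_- \le x_+$ by \eqref{eq:xi_a_increases}. Since $a = \varphi/((1-t)\psi)$ along the branch $x = \xi_i(a)$, we have $a_+ - a_- = \int_{x_-}^{x_+} g'(t)\,dt$ where $g(t) := \varphi(t)/((1-t)\psi(t))$, provided $\psi$ has no zero in $(x_-,x_+)$; this holds because between two consecutive zeros $\eta_{i-1}<\eta_i$ of $(1-t^2)\psi$ the function $\xi_i(\cdot)$ sweeps out $(\eta_{i-1},\eta_i)$ exactly once, so $(x_-,x_+)\subseteq(\eta_{i-1},\eta_i)$. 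The strategy is then to bound $g'$ from above on this interval: this yields $a_+ - a_- \le (x_+ - x_-)\cdot \max_{[x_-,x_+]} g'$, and rearranging gives the desired lower bound on $x_+ - x_-$, once the maximum of $g'$ is expressed in terms of $i$, $n$, $a_-$ and $a_+$. The key point is that $|g'(t)|$ is controlled by $\bigl(|\varphi'||(1-t)\psi| + |\varphi|\,|\psi| + |\varphi|(1-t)|\psi'|\bigr)/\bigl((1-t)^2\psi^2\bigr)$; using the upper bounds \eqref{eq:phi_upper_bound}, \eqref{eq:psi_upper_bound}, \eqref{eq:phi_deriv_estimate}, \eqref{eq:psi_deriv_estimate} for the numerator and the lower bound \eqref{eq:phi_psi_lower_bound_new} for $|\varphi| + (1-t^2)\min\{n,(1-t^2)^{-1/2}\}|\psi|$ for the denominator, and inserting the estimate $\sqrt{1-t}\asymp_w (n+1-i)/n$, $\sqrt{1+t}\asymp_w i/n$ valid on $[x_-,x_+]$ from Proposition~\ref{Gauss} (here using $x_+ = \xi_i(a_+)\le\xi_{i+1}(0)$ so that \eqref{eq:root_distance_from_1} applies for the lower bound on $\sqrt{1-t}$), one obtains after simplification a bound of the shape
\begin{equation*}
g'(t) \le C(w)\,\frac{n^3}{(n+1-i)^2}\left(1 + \frac{n+1-i}{i}\,a(t)\right)^2,
\end{equation*}
where $a(t) = g(t)$ is the parameter value corresponding to $t$. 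Since $a(t)$ ranges monotonically over $[a_-,a_+]$ as $t$ ranges over $[x_-,x_+]$, the factor $(1 + \frac{n+1-i}{i}a(t))^2$ is at most $(1 + \frac{n+1-i}{i}a_+)^2$, but to get the sharper product $\bigl(1+\tfrac{n+1-i}{i}a_-\bigr)\bigl(1+\tfrac{n+1-i}{i}a_+\bigr)$ in the denominator I will instead integrate more carefully: bound $g'(t)\le C(w)\frac{n^3}{(n+1-i)^2}\bigl(1+\tfrac{n+1-i}{i}a(t)\bigr)^2$ and change variables from $t$ to $s = a(t)$, giving
\begin{equation*}
x_+ - x_- = \int_{a_-}^{a_+}\frac{ds}{g'(t(s))} \ge c(w)\,\frac{(n+1-i)^2}{n^3}\int_{a_-}^{a_+}\frac{ds}{\bigl(1+\tfrac{n+1-i}{i}s\bigr)^2},
\end{equation*}
and the elementary integral $\int_{a_-}^{a_+}(1+\beta s)^{-2}\,ds = \beta^{-1}\bigl[(1+\beta a_-)^{-1} - (1+\beta a_+)^{-1}\bigr] = (a_+-a_-)/\bigl((1+\beta a_-)(1+\beta a_+)\bigr)$ with $\beta = (n+1-i)/i$ produces exactly the claimed expression \eqref{eq:separation_positive}.

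The main obstacle I anticipate is making the comparison $\sqrt{1-t}\asymp_w (n+1-i)/n$ genuinely uniform over the whole sub-interval $[x_-,x_+]$ and checking that the lower bound on the denominator of $g'$ does not degenerate. The lower bound \eqref{eq:phi_psi_lower_bound_new} involves $|\varphi| + (1-t^2)\min\{n,(1-t^2)^{-1/2}\}|\psi|$, which is $|\varphi| + (1-t)\psi$-like only up to the factor $\min\{n,(1-t^2)^{-1/2}\}$; one must verify that on $[x_-,x_+]$ this factor is comparable (up to $w$-dependent constants) to $(1-t^2)^{-1/2}$, equivalently that $1-t^2 \gtrsim_w n^{-2}$ there, which again follows from Proposition~\ref{Gauss} as long as $i$ is bounded away from $1$ and $n$; the boundary indices $i\in\{1,n\}$ (and more generally $i=n$ with $a$ large, where $\xi_n(a)\to1$) need a small separate argument or are handled by noting the bound becomes vacuous or by direct estimation. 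A secondary technical point is justifying the change of variables $t\leftrightarrow a$, i.e.\ that $g$ is a $C^1$ diffeomorphism from $(\eta_{i-1},\eta_i)$ onto $\mathbb{R}$ — this is exactly the analyticity/monotonicity of $\xi_i(a)$ already invoked via the implicit function theorem in Section~\ref{sec:background}, so I will simply cite that. Everything else is bookkeeping with the estimates already assembled in Section~\ref{sec:preliminary_estimates}.
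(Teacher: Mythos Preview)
Your approach is correct and differs in structure from the paper's. The paper bounds the discrete difference $P_{a_-}(\xi_i(a_+))-P_{a_-}(\xi_i(a_-))$ two ways: from above by the mean value theorem and Proposition~\ref{Bernstein}, and from below via the exact identities $P_{a_-}(\xi_i(a_+))=(a_+-a_-)(1-\xi_i(a_+))\psi(\xi_i(a_+))=\tfrac{a_+-a_-}{a_+}\varphi(\xi_i(a_+))$ together with Corollary~\ref{cor:Badkov}; comparing the two yields the result, with the product $(1+\beta a_-)(1+\beta a_+)$ emerging because $a_-$ enters the upper bound on $|P_{a_-}'|$ while $a_+$ enters the lower bound at $\xi_i(a_+)$. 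You instead differentiate $a=g(t)$, bound $g'(t)$ pointwise (note that after substituting $|\varphi|=a(1-t)|\psi|$ your quotient-rule bound collapses to $|P_a'|/((1-t)|\psi|)$, so the same estimates apply), and integrate $dx/da=1/g'$; the product form falls out of the elementary integral $\int(1+\beta s)^{-2}\,ds$. Both routes rest on exactly the same underlying estimates (Proposition~\ref{Bernstein}, Corollary~\ref{cor:Badkov}, Proposition~\ref{Gauss}). Your worry about the edge indices $i\in\{1,n\}$ is not a real obstacle: the needed inequality $\min\{n,(1-t^2)^{-1/2}\}\le C(w)\,n^2/(i(n+1-i))$ is trivial there since the right side equals $n$, and your lower bound on $(1-t)|\psi|$ comes from substituting $\varphi=a(1-t)\psi$ into \eqref{eq:phi_psi_lower_bound_new}, which requires no lower bound on $1-t$.
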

\begin{proof} We only prove \eqref{eq:separation_positive}. The proof of
\eqref{eq:separation_negative} is similar.

Fix $1\le i\le n$ and $0\le a_-<a_+<\infty$. The proof proceeds by
bounding $P_{a_-}(\xi_i(a_+))-P_{a_-}(\xi_i(a_-))$ from below and
from above (alternatively one can bound the same expression with
$P_{a_-}$ replaced by $P_{a_+})$.

On the one hand, by \eqref{eq:xi_a_increases} and Lagrange's mean
value theorem,
$$P_{a_-}(\xi_i(a_+))-P_{a_-}(\xi_i(a_-))=P_{a_-}'(t)(\xi_i(a_+)-\xi_i(a_-))$$
for some $\xi_i(a_-)<t<\xi_i(a_+)$. Hence by Proposition
\ref{Bernstein} and Proposition \ref{Gauss},
\begin{align}\label{eq:combination_of_P_a_upper_bound}
  &\lvert P_{a_-}(\xi_i(a_+))-P_{a_-}(\xi_i(a_-))\rvert=\lvert
  P_{a_-}'(t)\rvert(\xi_i(a_+)-\xi_i(a_-))\leq\nonumber\\
  &\leq C(w)
n\cdot\min\left\{n,\frac{1}{\sqrt{1-t^2}}\right\}^{\frac{3}{2}}\left(1+a_-\left((1-t)\min\left\{n,\frac{1}{\sqrt{1-t^2}}\right\}+\frac{1}{n}\right)\right)(\xi_i(a_+)-\xi_i(a_-))\leq\nonumber\\
  &\leq C(w)
n\left(\frac{n^2}{i(n+1-i)}\right)^{\frac{3}{2}}\left(1+a_-\frac{n+1-i}{i}\right)(\xi_i(a_+)-\xi_i(a_-)).
\end{align}

On the other hand, using the fact that
$P_{a_-}(\xi_i(a_-))=P_{a_+}(\xi_i(a_+))=0$ by definition and
substituting the definition \eqref{eq:P_a_def} of $P_a$ gives
\begin{equation}\label{eq:P_a_expression_psi}
P_{a_-}(\xi_i(a_+))-P_{a_-}(\xi_i(a_-))=P_{a_-}(\xi_i(a_+))-P_{a_+}(\xi_i(a_+))=(a_+-a_-)(1-\xi_i(a_+))\psi(\xi_i(a_+)).
\end{equation}
Similarly, we may obtain an expression in terms of $\varphi$ by
writing,
\begin{equation}\label{eq:P_a_expression_phi}
P_{a_-}(\xi_i(a_+))-P_{a_-}(\xi_i(a_-))=P_{a_-}(\xi_i(a_+))-\frac{a_-}{a_+}P_{a_+}(\xi_i(a_+))=\frac{a_+-a_-}{a_+}\varphi(\xi_i(a_+)).
\end{equation}
Combining \eqref{eq:P_a_expression_psi} and
\eqref{eq:P_a_expression_phi}, applying Corollary~\ref{cor:Badkov}
and Proposition~\ref{Gauss} yields
\begin{align*}
&\left(a_{+}+(1+\xi_i(a_+))\min\left\{n,\frac{1}{\sqrt{1-\xi_i(a_+)^2}}\right\}\right)\lvert
P_{a_-}(\xi_i(a_+))-P_{a_-}(\xi_i(a_-))\rvert=\\
&=(a_+-a_-)\left[|\varphi(\xi_i(a_+))|+\left(1-\xi_i(a_+)^2\right)\min\left\{n,\frac{1}{\sqrt{1-\xi_i(a_+)^2}}\right\}|\psi(\xi_i(a_+))|\right]\ge\\
&\ge c(w)(a_+ -
a_-)\min\left\{n,\frac{1}{\sqrt{1-\xi_i(a_+)^2}}\right\}^{1/2}\ge
c(w)(a_+ - a_-)\frac{n}{\sqrt{i(n+1-i)}}.
\end{align*}
In addition, by Proposition~\ref{Gauss},
\begin{equation*}
  (1+\xi_i(a_+))\min\left\{n,\frac{1}{\sqrt{1-\xi_i(a_+)^2}}\right\}\le
  C(w)\frac{i}{n+1-i}.
\end{equation*}
Putting together the last two inequalities we finally arrive at
\begin{equation}\label{eq:combination_of_P_a_lower_bound}
  \lvert
P_{a_-}(\xi_i(a_+))-P_{a_-}(\xi_i(a_-))\rvert\ge c(w)\frac{(a_+ -
a_-)\frac{n}{\sqrt{i(n+1-i)}}}{a_{+} + \frac{i}{n+1-i}}.
\end{equation}

Comparing \eqref{eq:combination_of_P_a_upper_bound} and
\eqref{eq:combination_of_P_a_lower_bound} shows that
$$C(w)\cdot n\cdot\left(\frac{n^2}{i(n+1-i)}\right)^{\frac{3}{2}}\left(1+a_-\frac{n+1-i}{i}\right)(\xi_i(a_+)-\xi_i(a_-))\geq c(w)\frac{(a_+-a_-)\frac{n}{\sqrt{i(n+1-i)}}}{a_++\frac{i}{n+1-i}},$$
from which \eqref{eq:separation_positive} follows.
\end{proof}

\section{Estimating $P_a'$ and the interpolation polynomials}\label{sec:p_a_derivative}

In this section we prove a lower bound on $|P_a'|$ at roots of
$P_a$. This lower bound will be used in the next section to prove
Theorem~\ref{thm:Lipschitz} and Theorem~\ref{thm:abs_cont}. We also give bounds for certain
interpolation polynomials defined below.

We recall the definition of the sign function from
\eqref{eq:sgn_a_def}. Define also, for real $x$, the truncation
operation,
\begin{equation}\label{eq:x_bar_def}
  \overline{x} := \begin{cases}
    \xi_1(0)&x\le \xi_1(0)\\
    x&x\in[\xi_1(0),\xi_n(0)]\\
    \xi_n(0)&x\ge \xi_n(0)
  \end{cases}.
\end{equation}
\begin{lemma}\label{P_a}
Suppose $w$ satisfies the conditions of Theorem~\ref{thm:Badkov_thm}
with $\alpha=0$. Then there exists a constant $c(w)>0$ such that the
following holds for every $-1<x<1$.
\begin{enumerate}
\item If $x=\xi_r(a)$ for $1\leq r\leq n$ and
$-\infty<a<\infty$ then
\begin{equation*}
  |P'_a(x)|\ge c(w)\sqrt{\frac{n}{\lambda(x)}}\max\left\{\frac{|a|}{1+\sgn(a)x},
    \frac{1}{\sqrt{1-\overline{x}^2}}\right\}.
\end{equation*}
\item If $x=\eta_r$ for $1\leq r\leq n-1$ then
$$|\psi'(x)|\geq c(w)\sqrt{\frac{n}{\lambda(x)}}\cdot\frac{1}{1-x^2}.$$
\end{enumerate}
\end{lemma}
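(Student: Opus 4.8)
The plan is to reduce the lemma, via an exact integral representation of the Christoffel numbers $\lambda$, to two lower bounds on integrals of squared interpolation polynomials, and then to prove those bounds by evaluating a suitable Gauss or Lobatto quadrature formula for $w$ and retaining a single, carefully chosen, term.

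\emph{Step 1: the integral representation.} Recall the minimax description $\lambda(y)=\min\{\int_{-1}^1 f(t)w(t)\,dt\}$, the minimum taken over polynomials $f$ of degree at most $2n-1$ with $f\ge0$ on $[-1,1]$ and $f(y)=1$ (used in the proof of Lemma~\ref{lambda}). When $y=x=\xi_r(a)$, the companion nodes of $\Sigma_x$ are the $\xi_i(a)$ with $i\ne r$, together with $-1$ if $a>0$ and $+1$ if $a<0$. The choice $f(t)=\frac{1+\sgn(a)t}{1+\sgn(a)x}\bigl(\frac{P_a(t)}{P_a'(x)(t-x)}\bigr)^2$ (interpreting $1+\sgn(a)t\equiv1$ and $1+\sgn(a)x=1$ when $a=0$) is admissible, vanishes at every companion node, hence attains the minimum, and yields
\[
 P_a'(x)^2=\frac{1}{(1+\sgn(a)x)\,\lambda(x)}\int_{-1}^1(1+\sgn(a)t)\Bigl(\frac{P_a(t)}{t-x}\Bigr)^2 w(t)\,dt ,
\]
the case $a=0$ being the classical Gauss--Christoffel identity. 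Likewise, at an interior Lobatto node $y=\eta_r$, whose companions are $\pm1$ and the $\eta_j$ with $j\ne r$, the choice $f(t)=\frac{1-t^2}{1-\eta_r^2}\bigl(\frac{\psi(t)}{\psi'(\eta_r)(t-\eta_r)}\bigr)^2$ gives
\[
 \psi'(\eta_r)^2=\frac{1}{(1-\eta_r^2)\,\lambda(\eta_r)}\int_{-1}^1(1-t^2)\Bigl(\frac{\psi(t)}{t-\eta_r}\Bigr)^2 w(t)\,dt .
\]
Since $\lambda(x)$ (resp.\ $\lambda(\eta_r)$) now cancels against the asserted bound, the lemma is equivalent to the estimates
\[
 \int_{-1}^1(1+\sgn(a)t)\Bigl(\tfrac{P_a(t)}{t-x}\Bigr)^2 w\,dt\ \ge\ c(w)\,n\,\max\Bigl\{\tfrac{a^2}{1+\sgn(a)x},\ \tfrac{1+\sgn(a)x}{1-\overline{x}^{\,2}}\Bigr\},\qquad
 \int_{-1}^1(1-t^2)\Bigl(\tfrac{\psi(t)}{t-\eta_r}\Bigr)^2 w\,dt\ \ge\ \frac{c(w)\,n}{1-\eta_r^2}.
\]

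\emph{Step 2: the lower bounds.} In each case the integrand is a nonnegative polynomial of degree at most $2n-1$, so any quadrature for $w$ exact on degree $2n-1$ writes the integral as a sum of nonnegative terms, of which we keep just one. For the term $\tfrac{1+\sgn(a)x}{1-\overline{x}^{\,2}}$, and for the $\psi$-integral, we use the Lobatto quadrature for $w$: its interior nodes $\eta_j$ are zeros of $\psi$, where $P_a$ reduces to $\varphi$, and there $\varphi(\eta_j)$ is \emph{large}, of order $\min\{n,(1-\eta_j^2)^{-1/2}\}^{1/2}$, precisely by the lower bound \eqref{eq:phi_psi_lower_bound_new} of Corollary~\ref{cor:Badkov}. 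For the term $\tfrac{a^2}{1+\sgn(a)x}$ we use instead the Gaussian quadrature for $w$: its nodes $\xi_i(0)$ are zeros of $\varphi$, where $P_a(t)=-a(1-\sgn(a)t)\psi(t)$, and there $\psi(\xi_i(0))$ is large by the same corollary (now $\varphi$ vanishes). In every case we retain the node $\zeta$ nearest to $x$ (resp.\ to $\eta_r$) and bound its summand from below using three ingredients: the lower bound $\lambda(\zeta)\ge c(w)\,n^{-1}\max\{n^{-1},\sqrt{1-\zeta^2}\}$ for Christoffel numbers (Corollary~\ref{cor:lambda_lower}, which follows from the minimax description of $\lambda$ by comparison with the constant weight); the Badkov-type lower bounds on $\varphi$ or $\psi$ just described; and the node-spacing and endpoint estimates of Lemma~\ref{"erdos-turan"} and Proposition~\ref{Gauss}, which guarantee that $|\zeta-x|$ is small while $1\pm\zeta$ and $1-\zeta^2$ remain comparable to $1\pm x$ and $1-\overline{x}^{\,2}$.

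The conceptual point is the identity in Step 1 together with the observation that Corollary~\ref{cor:Badkov} furnishes a \emph{lower} bound on $\varphi$ (resp.\ $\psi$) exactly at the zeros of $\psi$ (resp.\ $\varphi$) --- which is where the retained quadrature node sits. The main obstacle is the bookkeeping in Step 2: matching the exact powers of $n$ and of $1\pm x$ requires tracking how $\lambda(\zeta)$, the relevant polynomial value, and the distance $|\zeta-x|$ each scale, and it requires handling separately the degenerate case $a=0$ of the first estimate (there the Gaussian quadrature gives only a vacuous identity, so the baseline bound must also be extracted from the Lobatto quadrature) as well as the boundary regimes $r\in\{1,n\}$ and $x\notin[\xi_1(0),\xi_n(0)]$, where the truncation $\overline{x}$ and the clamping in Proposition~\ref{Gauss} enter.
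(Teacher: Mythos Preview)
Your approach is correct in spirit and genuinely different from the paper's. Both start from the same integral identity --- your Step~1 is exactly $\lambda(x)=\int q_x w$ combined with the explicit formula \eqref{eq:Lagrange_representation} for $q_x$. The divergence is in Step~2: the paper bounds the integral from below by \emph{restricting to a short interval} $I$ near $x$ on which $|P_a|$ is provably large (this is Lemma~\ref{segment}), and then uses $\lambda(x)\ge\int_I q_x w\ge m\,|I|\min_I q_x$; see Lemma~\ref{lem:P'a_via_Pa}. You instead \emph{apply a quadrature formula} and retain a single node. Your route avoids the sign-tracking the paper needs to carve out the sub-intervals $\tilde I_\varphi,\tilde I_\psi$ on which $\varphi$ and $a\psi$ have opposite signs; the paper's route is more self-contained because it does not need a lower bound on $\lambda(\zeta)$ at the retained node. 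On that point: in the paper, Corollary~\ref{cor:lambda_lower} is \emph{deduced from} the present lemma, so citing it directly would be circular, but your parenthetical remark is correct --- $\lambda^w(\zeta)\ge m\,\lambda^u(\zeta)$ follows from the minimax description, and the Legendre case is classical --- so the circularity is avoided.

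One genuine slip: for the $\psi$-integral in part~2 you cannot use the Lobatto quadrature. The integrand $(1-t^2)\bigl(\psi(t)/(t-\eta_r)\bigr)^2$ vanishes at every interior Lobatto node $\eta_j\neq\eta_r$ (since $\psi(\eta_j)=0$), and at $t=\eta_r$ it simply returns $(1-\eta_r^2)\psi'(\eta_r)^2$, so the Lobatto sum collapses to the identity you started from and gives nothing. You must use the \emph{Gaussian} quadrature here: at a node $\xi_i(0)$ adjacent to $\eta_r$, it is $\varphi$ that vanishes, and then \eqref{eq:phi_psi_lower_bound_new} forces $|\psi(\xi_i(0))|$ to be large, which is exactly the ingredient your scheme needs. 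With that swap the bookkeeping goes through as you describe.
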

Our proof of the lemma relies on lower bounds for $|\varphi|$ and
$|\psi|$. The bound \eqref{eq:phi_psi_lower_bound_new} shows that
$|\varphi|$ and $|\psi|$ cannot be simultaneously small. Thus, near
a root of one, the other must be large. The next lemma makes this
idea precise.
\begin{lemma}\label{segment}
Suppose $w$ satisfies the conditions of Theorem~\ref{thm:Badkov_thm}
with $\alpha=0$. Then there exists a constant $c(w)>0$ such that the
following holds.
\begin{enumerate}
\item For every $1\leq r\leq n-1$ the interval $I:=\big\{x\,:\,\lvert x-\eta_r\rvert\leq c(w)\frac{r(n+1-r)}{n^3}\big\}$ satisfies $I\subseteq [-1,1]$ and
$$\min_{x\in I}\, |\varphi(x)|\geq c(w)\frac{n}{\sqrt{r(n+1-r)}}.$$
\item For every $1\leq r\leq n$ the interval $I:=\big\{x\,:\,\lvert x-\xi_r(0)\rvert\leq c(w)\frac{r(n+1-r)}{n^3}\big\}$ satisfies $I\subseteq [-1,1]$ and
$$\min_{x\in I}\,\lvert \psi(x)\rvert\geq c(w)\left(\frac{n}{\sqrt{r(n+1-r)}}\right)^3.$$
\end{enumerate}
\end{lemma}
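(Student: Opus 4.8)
The plan is to prove parts (1) and (2) in parallel, since they are structurally identical: each asserts that a polynomial ($\varphi$ for part (1), $\psi$ for part (2)) is bounded below on a small interval around a point (a root of $\psi$ for part (1), a root of $\varphi$ for part (2)), and the lower bound is exactly the order of magnitude of that polynomial guaranteed by the Badkov-type estimates near that point. The mechanism is: at the center point one of $\varphi,\psi$ vanishes, so by \eqref{eq:phi_psi_lower_bound_new} the \emph{other} must be as large as $c(w)\min\{n,1/\sqrt{1-x^2}\}^{1/2}$ times the appropriate factor; then I use the derivative bounds of Proposition~\ref{Bernstein} to control how fast that polynomial can decrease as we move away from the center, so on an interval of the stated (small) length it cannot drop by more than a constant factor.

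More concretely, for part (1): let $x_0=\eta_r$, so $\psi(x_0)=0$. By Proposition~\ref{Gauss} applied to $\eta_r=\xi^{(1-t^2)w}_r(0)$ — or directly, since $\eta_r$ is a root of the $(n+1)$-point Lobatto nodes and these interlace the Gaussian nodes via \eqref{eq:xi_a_increases} — we have $1-x_0^2 \asymp_w r(n+1-r)/n^2$, so $\min\{n, 1/\sqrt{1-x_0^2}\}\asymp_w n/\sqrt{r(n+1-r)}$. Plugging $\psi(x_0)=0$ into \eqref{eq:phi_psi_lower_bound_new} gives $|\varphi(x_0)|\ge c(w)\min\{n,1/\sqrt{1-x_0^2}\}^{1/2}\asymp_w \big(n/\sqrt{r(n+1-r)}\big)^{1/2}$. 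Wait — the target is $n/\sqrt{r(n+1-r)}$, not its square root, so the relevant lower bound must actually be read off differently; the correct reading is that one needs the \emph{stronger} consequence, valid since we are at a point where one factor vanishes: when $\psi(x_0)=0$ the bound \eqref{eq:Badkov_bound} with $\alpha=0$ forces $|\varphi(x_0)|\ge c(w)\min\{n,1/\sqrt{1-x_0^2}\}^{1/2}$, and this is indeed of order $(n/\sqrt{r(n+1-r)})^{1/2}$ — so the stated conclusion $|\varphi|\ge c(w)\, n/\sqrt{r(n+1-r)}$ on $I$ should instead be read with the exponent $1/2$, i.e. I would prove $\min_{x\in I}|\varphi(x)|\ge c(w)\min\{n,1/\sqrt{1-x^2}\}^{1/2}$ and note this equals $c(w)(n/\sqrt{r(n+1-r)})^{1/2}$ on $I$; similarly part (2) follows from the $\alpha=1$ case, giving $|\psi|\gtrsim_w (n/\sqrt{r(n+1-r)})^{3/2}$. (I flag this as a point to reconcile with the statement's exponents before finalizing.) The propagation step is then uniform: on an interval $I$ of length $L$ centered at $x_0$, Lagrange's mean value theorem and Proposition~\ref{Bernstein} give, for $x\in I$, $|\varphi(x)|\ge |\varphi(x_0)| - L\cdot\max_{I}|\varphi'| \ge |\varphi(x_0)| - L\cdot C(w)\, n\min\{n,1/\sqrt{1-x^2}\}^{3/2}$; since on $I$ (which I must first check satisfies $1-x^2\asymp_w r(n+1-r)/n^2$ for all its points, using $L\ll 1-x_0^2$) the quantity $\min\{n,1/\sqrt{1-x^2}\}$ stays comparable to its value at $x_0$, choosing $L=c(w) r(n+1-r)/n^3$ with $c(w)$ small makes the subtracted term at most half of $|\varphi(x_0)|$, yielding the claim. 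Part (2) is identical with $\psi,\varphi$ swapped and the exponent $1/2$ replaced by $3/2$, using \eqref{eq:psi_deriv_estimate} in place of \eqref{eq:phi_deriv_estimate}.

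Two routine verifications are needed at the start: first, $I\subseteq[-1,1]$, which holds because $L=c(w)r(n+1-r)/n^3$ is far smaller than $\mathrm{dist}(x_0,\{\pm1\})=1-|x_0|\asymp_w (\min\{r,n+1-r\})^2/n^2$ (for $c(w)$ small), and second, the comparability of $1-x^2$ across $I$, which follows from the same gap estimate. The main obstacle I anticipate is not any single step but getting the bookkeeping of the $\min\{n,1/\sqrt{1-x^2}\}$ factors exactly right — in particular confirming that $1-\eta_r^2 \asymp_w r(n+1-r)/n^2$ with constants depending only on $w$ (this needs Proposition~\ref{Gauss} applied to the weight $(1-t^2)w(t)$, whose $m,M$ are controlled by those of $w$ since $1-t^2$ is bounded, or alternatively the interlacing \eqref{eq:xi_a_increases} between $\eta$'s and $\xi(0)$'s together with \eqref{eq:xi_i_distance_endpoint}), and reconciling the exponents in the conclusion with what \eqref{eq:phi_psi_lower_bound_new} / Theorem~\ref{thm:Badkov_thm} actually deliver. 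Once the correct exponent is pinned down, the derivative-propagation argument is entirely mechanical.
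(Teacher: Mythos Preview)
Your approach is exactly the paper's: lower bound at the center via \eqref{eq:phi_psi_lower_bound_new} (using that the other polynomial vanishes there), then propagate via Proposition~\ref{Bernstein} and the mean value theorem on a short interval whose length is chosen so the derivative term eats at most half.

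The only issue is the scaling slip that caused your hesitation about the exponents. Proposition~\ref{Gauss} gives $\sqrt{1\pm x}\asymp_w (\text{index})/n$, so $\sqrt{1-\eta_r^2}\asymp_w r(n+1-r)/n^2=\rho$ and hence $1-\eta_r^2\asymp_w\rho^2$, \emph{not} $\rho$ as you wrote. With this correction,
\[
\min\Bigl\{n,\frac{1}{\sqrt{1-\eta_r^2}}\Bigr\}\asymp_w \frac{1}{\rho}=\frac{n^2}{r(n+1-r)},
\]
so its square root is $n/\sqrt{r(n+1-r)}$, matching the statement of part~(1) exactly; and its $3/2$ power is $(n/\sqrt{r(n+1-r)})^3$, matching part~(2). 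The subtracted term in your propagation step becomes $L\cdot C(w)\,n\cdot\rho^{-3/2}=c(w)\rho n^{-1}\cdot C(w)\,n\,\rho^{-3/2}=c(w)C(w)\rho^{-1/2}$, which is indeed a small multiple of $|\varphi(\eta_r)|\ge c(w)\rho^{-1/2}$ once the outer $c(w)$ is taken small. So there is nothing to reconcile; after fixing that one line your argument is complete and coincides with the paper's.
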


\begin{proof}
We prove only the first part. The proof of the second part is
similar.

Denote $\rho:=\frac{r(n+1-r)}{n^2}$. Let $\eps = \eps(w)>0$ be a
constant, depending on $w$ but independent of $n$, whose value is
sufficiently small for the following calculations. Define
\begin{equation*}
  I_1 := \left\{x\,:\, |x-\eta_r|\le \eps\frac{\rho}{n}\right\}
  \quad\text{and}\quad I_2:=\left\{x\,:\, 1-x^2\ge \frac{1}{2}(1-\eta_r^2)\right\}.
\end{equation*}
Observe that $c(w)\rho^2\le 1-\eta_r^2\le C(w)\rho^2$ by
\eqref{eq:xi_a_increases} and Proposition~\ref{Gauss}. Thus, since
$\psi(\eta_r)=0$, Corollary~\ref{cor:Badkov} implies that
\begin{equation*}
  \lvert \varphi(\eta_r)\rvert\geq c(w)\min\left\{n,\frac{1}{\sqrt{1-{\eta_r}^2}}\right\}^{1/2}\geq\frac{c(w)}{\sqrt{\rho}}.
\end{equation*}
In addition, Proposition~\ref{Bernstein} implies that
\begin{equation*}
  |\varphi'(x)|\leq C(w)n\cdot\min\left\{n,\frac{1}{\sqrt{1-x^2}}\right\}^{\frac{3}{2}}\leq
  C(w)\frac{n}{\rho^{3/2}},\quad x\in I_2.
\end{equation*}
Finally, since $\rho\ge \frac{1}{n}$, it follows that $I_1\subseteq
I_2$ if $\eps$ is sufficiently small. Thus, for sufficiently small
$\eps$,
\begin{equation*}
  |\varphi(x)|\ge |\varphi(\eta_r)| - |x-\eta_r|\max_{y\in I_2} |\varphi'(y)|
  \ge \frac{c(w)}{\sqrt{\rho}} - \eps \frac{\rho}{n}
  C(w)\frac{n}{\rho^{3/2}} \ge \frac{c(w)}{\sqrt{\rho}},\quad x\in
  I_1.\qedhere
\end{equation*}
\end{proof}

A second tool in our proof of Lemma~\ref{P_a} is the following
polynomial, which is a relative of the Lagrange interpolation
polynomial for $P_a$.

Recall that $S_x$ is the set of nodes of $\Sigma_x$ and recall the
definition of the index function $I$ from \eqref{eq:index_fcn_def}.
For $-1<x<1$ define the polynomial
\begin{equation}\label{eq:q_x_def}
q_x(t):=\prod_{u\in S_x-\{x\}}\left(\frac{t-u}{x-u}\right)^{I(u)}.
\end{equation}

\begin{figure}[t]
\centering
{\includegraphics[width=0.7\textwidth]{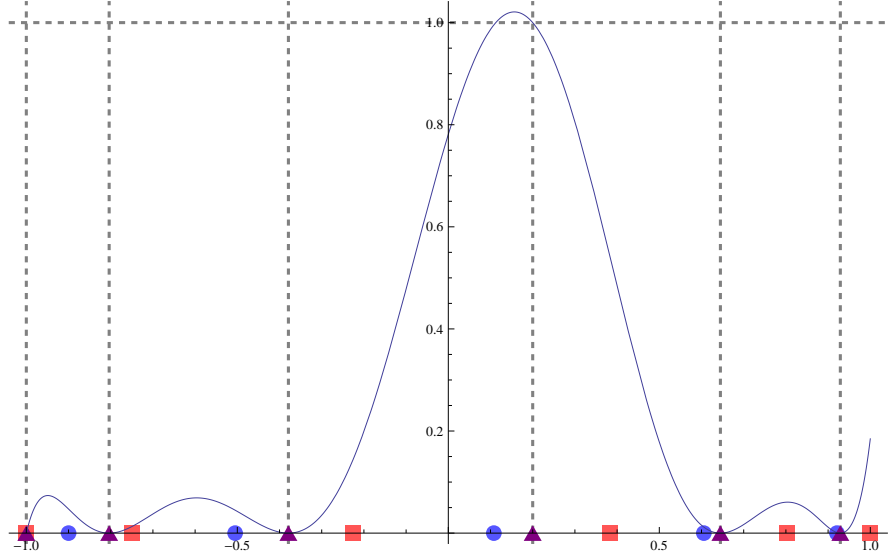}}
{\caption{A plot of the polynomial $q_x$ for $n=5$, $x=0.2$ and the
weight function $w(t)=\max\{1,1+4t\}$. The circles on the axis
denote the nodes of the Gaussian quadrature, the squares denote the
nodes of the Lobatto quadrature and the triangles denote the nodes
of the canonical representation $\Sigma_{0.2}$. Observe that, by
definition, $q_x(x)=1$ and $q_x'$ is zero at all nodes of
$\Sigma_{0.2}-\{-1,x,1\}$. \label{fig:q_x}} }
\end{figure}

Figure~\ref{fig:q_x} shows a plot of $q_x$ for a certain choice of
the parameters. We list some straightforward properties of $q_x$.
\begin{enumerate}
  \item $\deg(q_x) = \sum_{u\in S_x} I(x) - 2\le 2n-1$.
  \item $q_x(x)=1$, $q_x(u)=0$ for
$u\in S_x-\{x\}$ and $q'_x(u)=0$ for $u\in (S_x-\{x\})\cap(-1,1)$.
Furthermore, $q_x$ is the (unique) least degree polynomial
satisfying these equalities.
  \item $q_x\ge 0$ on $[-1,1]$.
  \item Writing $x=\xi_r(a)$ we have the following formula
\begin{equation}\label{eq:Lagrange_representation}
\begin{split}
  q_x(t)
&=\begin{cases}
    \frac{1+\sgn(a)t}{1+\sgn(a)x}\left(\frac{P_a(t)}{(t-x)P'_a(x)}\right)^2 & -\infty<a<\infty\\
    \frac{1-t^2}{1-x^2}\left(\frac{\psi(t)}{(t-x)\psi'(x)}\right)^2&|a|=\infty
  \end{cases}.
  \end{split}
\end{equation}
\end{enumerate}

Observe that by the first two properties above, using the quadrature
formula $\Sigma_x$, we have
\begin{equation}\label{eq:q_x_integral}
\int_{-1}^1q_x(t)w(t)dt=\sum_{u\in
S_x}\lambda_x(u)q_x(u)=\lambda(x).
\end{equation}

\begin{lemma}\label{lem:P'a_via_Pa}
Suppose the weight function $w$ satisfies $0<m\le w\le M$ almost
everywhere. Let $1\leq r\leq n$ and let $2\leq r'\leq n-1$ satisfy
$|r'-r|\leq 1$. Suppose that $I$ is an interval satisfying
$I\subseteq[\xi_{r'-1}(0),\xi_{r'+1}(0)]$. Then,
\begin{enumerate}
\item For every $-\infty<a<\infty$,
\begin{equation}\label{eq:P'a_via_Pa}|P'_a(\xi_r(a))|\ge c(w)\sqrt{|I|}\frac{n^3}{r(n+1-r)}\frac{1}{\sqrt{\lambda(\xi_r(a))}}\min_{t\in I}|P_{a}(t)|.\end{equation}
\item If $1\leq r\leq n-1$,
\begin{equation*}
|\psi'(\eta_r)|\ge c(w)\sqrt{|I|}\frac{n^3}{r(n+1-r)}\frac{1}{\sqrt{\lambda(\eta_r)}}\min_{t\in I}|\psi(t)|.
\end{equation*}
\end{enumerate}
\end{lemma}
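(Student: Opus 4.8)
The plan is to exploit the interpolation polynomial $q_x$ and its integral identity \eqref{eq:q_x_integral}. Fix $r,r',I$ as in the statement and write $x:=\xi_r(a)$. Since $q_x\ge 0$ on $[-1,1]$ and $w\ge m$, identity \eqref{eq:q_x_integral} gives $\lambda(x)=\int_{-1}^1 q_x(t)w(t)\,dt\ge m\int_I q_x(t)\,dt$. Substituting the Lagrange representation \eqref{eq:Lagrange_representation},
\[
q_x(t)=\frac{1+\sgn(a)t}{1+\sgn(a)x}\cdot\frac{P_a(t)^2}{(t-x)^2\,P_a'(x)^2},
\]
everything reduces to bounding the factors of the integrand from below on $I$: trivially $w(t)\ge m$; the ``endpoint factor'' $\frac{1+\sgn(a)t}{1+\sgn(a)x}$ must be shown to be $\ge c(w)$; and $1/(t-x)^2$ must be controlled, i.e.\ $|t-x|$ bounded above. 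Granting these, $\lambda(x)\ge c(w)\,|I|\,\frac{\min_{t\in I}|P_a(t)|^2}{|t-x|_{\max}^2\,P_a'(x)^2}$, and solving for $|P_a'(x)|$ together with the bound $|t-x|_{\max}\le C(w)\frac{r(n+1-r)}{n^3}$ yields \eqref{eq:P'a_via_Pa}.

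For the estimate on $|t-x|$: by \eqref{eq:xi_a_increases}, $x\in[\eta_{r-1},\eta_r]\subseteq[\xi_{r-1}(0),\xi_{r+1}(0)]$, while $|r'-r|\le 1$ and $2\le r'\le n-1$ give $I\subseteq[\xi_{r-2}(0),\xi_{r+2}(0)]$ (with the conventions $\xi_j(0)=-1$ for $j\le 0$ and $\xi_j(0)=1$ for $j\ge n+1$). Hence $|t-x|\le\xi_{r+2}(0)-\xi_{r-2}(0)$, a sum of at most four consecutive gaps $\xi_{j+1}(0)-\xi_j(0)$ with $|j-r|\le 2$. Each interior gap is $\le C(w)\frac{j(n-j)}{n^3}\le C(w)\frac{r(n+1-r)}{n^3}$ by Lemma~\ref{"erdos-turan"}, and each boundary gap equals $1+\xi_1(0)$ or $1-\xi_n(0)$, which is $\le C(w)/n^2\le C(w)\frac{r(n+1-r)}{n^3}$ by \eqref{eq:xi_i_distance_endpoint} and the trivial bound $r(n+1-r)\ge n/2$. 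For the endpoint factor: it equals $1$ when $a=0$; when $a>0$ it is $\frac{1+t}{1+x}$, and combining $1+x\le 1+\xi_{r+1}(0)$ with $1+t\ge 1+\xi_{r'-1}(0)$ and the elementary inequality $r'-1\ge\max(1,r-2)\ge\frac14(r+1)$ shows, via \eqref{eq:xi_i_distance_endpoint}, that $\frac{1+t}{1+x}\ge c(w)$; the case $a<0$ follows by the reflection $t\mapsto-t$.

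Part~(2) is entirely analogous: one starts from $x:=\eta_r$, for which $\Sigma_x$ is the Lobatto quadrature with $S_x=\{-1,\eta_1,\dots,\eta_{n-1},1\}$ and $q_x(t)=\frac{1-t^2}{1-x^2}\big(\frac{\psi(t)}{(t-x)\psi'(x)}\big)^2$ by the second branch of \eqref{eq:Lagrange_representation}; the factor $\frac{1-t^2}{1-x^2}$ is bounded below by $c(w)$ on $I$ in the same manner using \eqref{eq:xi_i_distance_endpoint}, and the bound on $|t-x|$ is identical. I expect the only delicate point to be the index bookkeeping near the ends of the range, $r\in\{1,2\}$ and $r\in\{n-1,n\}$: there one leans on $r(n+1-r)\ge n/2$ (so that $1/n^2$ is comparable to $\frac{r(n+1-r)}{n^3}$) and on the monotonicity \eqref{eq:xi_a_increases} to keep all constants uniform in $n$ and $r$, but no new idea is needed.
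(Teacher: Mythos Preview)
Your proposal is correct and follows essentially the same approach as the paper's proof: both start from $\lambda(x)\ge\int_I q_x(t)w(t)\,dt$, substitute the Lagrange representation \eqref{eq:Lagrange_representation}, and reduce to bounding the endpoint factor from below by $c(w)$ (the paper invokes Proposition~\ref{Gauss} where you use \eqref{eq:xi_i_distance_endpoint}) and $|t-x|$ from above by $C(w)\frac{r(n+1-r)}{n^3}$ via Lemma~\ref{"erdos-turan"}. Your treatment is more explicit about the index bookkeeping near the endpoints of the range, but the argument is the same.
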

The inequalities in the lemma may be trivial, in the sense that
their right-hand sides may vanish, but we will avoid this
possibility in our usage by choosing $I$ appropriately.
\begin{proof}
We prove only the first part of the lemma. The second part is
similar. Observe that by \eqref{eq:q_x_integral}, the non-negativity
of $q_{\xi_r(a)}$ and our assumption that $w$ is bounded below, we
have
\begin{equation}\label{eq:q_x_upper_bound}
  \lambda(\xi_r(a)) \ge \int_I q_{\xi_r(a)}(t)w(t)dt \ge |I|\min_{t\in I} \left(q_{\xi_r(a)}(t)w(t)\right) \ge c(w)|I|\min_{t\in I} q_{\xi_r(a)}(t).
\end{equation}
We continue to estimate the minimum. By \eqref{eq:Lagrange_representation},
\begin{equation}\label{eq:q_x_lower_bound}
  q_{\xi_r(a)}(t) = \frac{1+\sgn(a)t}{1+\sgn(a)\xi_r(a)}\cdot\frac{1}{(t-\xi_r(a))^2}\cdot P_{a}(t)^2\cdot \frac{1}{P'_a(\xi_r(a))^2}.
\end{equation}
Let us estimate each of the first two factors above separately.
First, observe that by the conditions of the lemma and
Proposition~\ref{Gauss},
\begin{equation}\label{eq:q_x_first}
  \min_{t\in I} \frac{1+\sgn(a)t}{1+\sgn(a)\xi_r(a)} \ge c(w).
\end{equation}
Second, note that by the conditions of the lemma and
Lemma~\ref{"erdos-turan"},
\begin{equation}\label{eq:q_x_second}
  \max_{t\in I} |t-\xi_r(a)| \le  \max\{|\xi_{r'+1}(0) - \xi_r(a)|,|\xi_{r'-1}(0) - \xi_r(a)|\}\le C(w) \frac{r(n+1-r)}{n^3}.
\end{equation}
Substituting \eqref{eq:q_x_first} and \eqref{eq:q_x_second} into \eqref{eq:q_x_lower_bound} we obtain
\begin{equation*}
  \min_{t\in I} q_{\xi_r(a)}(t) \ge c(w)\left(\frac{n^3}{r(n+1-r)}\right)^2
  \frac{1}{P'_a(\xi_r(a))^2}\left(\min_{t\in I}|P_a(t)|\right)^2.
\end{equation*}
Finally, we may continue \eqref{eq:q_x_upper_bound} to obtain
\begin{equation*}
  \lambda(\xi_r(a)) \ge c(w)|I|\left(\frac{n^3}{r(n+1-r)}\right)^2
  \cdot\frac{1}{P'_a(\xi_r(a))^2}\left(\min_{t\in I}|P_a(t)|\right)^2
\end{equation*}
and the lemma follows.
\end{proof}

\begin{proof}[Proof of Lemma~\ref{P_a}]

We prove only the first part of the lemma. The second part is
similar and even simpler.
To simplify the presentation, take $2\leq r'\leq n-1$ such that
$|r'-r|\leq 1$ (when $n=1,2$ the definitions of $\tilde{I}_\varphi$
and $\tilde{I}_\psi$ below may be adjusted properly and the proof of
Lemma~\ref{lem:P'a_via_Pa} repeated for them. We omit the details).
By Lemma \ref{segment} there is a constant $c(w)>0$ such that
\begin{align}
\min_{x\in I_{\varphi}}\, |\varphi(x)|&\geq c(w)\frac{n}{\sqrt{r'(n+1-r')}}\label{eq:I_phi_inequality},\\
\min_{x\in I_{\psi}}\,\lvert \psi(x)\rvert&\geq
c(w)\left(\frac{n}{\sqrt{r'(n+1-r')}}\right)^3.\label{eq:I_psi_inequality}
\end{align}
where
\begin{align*}
I_{\varphi}&:=\left\{x\,:\,\lvert x-\eta_{r'}\rvert\leq c(w)\frac{r'(n+1-r')}{n^3}\right\},\\
I_{\psi}&:=\left\{x\,:\,\lvert x-\xi_{r'}(0)\rvert\leq
c(w)\frac{r'(n+1-r')}{n^3}\right\}.
\end{align*}
In particular, $\varphi$ does not change sign in $I_{\varphi}$ and
$\psi$ does not change sign in $I_{\psi}$. Consequently, since
$\eta_{r'}\in I_{\varphi}$ and $\xi_{r'}(0)\in I_{\psi}$, then
necessarily
\begin{equation}\label{eq:subseteq1}
I_{\varphi}\subseteq[\xi_{r'}(0),\xi_{r'+1}(0)]\quad\text{and}\quad
I_{\psi}\subseteq[\eta_{r'-1},\eta_{r'}].
\end{equation}
Thus $\psi$ changes sign in $I_{\varphi}$ exactly once, at the
midpoint $\eta_{r'}$, and $\varphi$ changes sign in $I_{\psi}$
exactly once, at the midpoint $\xi_{r'}(0)$.

Let $\tilde{I}_{\varphi}$ be the sub-segment of $I_{\varphi}$ in
which $\varphi$ and $a\,\psi$ have opposite signs. Precisely,
\begin{equation*}
  \tilde{I}_{\varphi}:=\{x\in I_{\varphi}\,:\,a\varphi(x)\psi(x)\le 0\}.
\end{equation*}
(the sub-segment to the left of $\eta_{r'}$ if $a<0$ or the
sub-segment to the right of $\eta_{r'}$ if $a>0$ or the whole
$I_\varphi$ if $a=0$, see Figure~\ref{fig:phi_psi}). In the same
manner, let $\tilde{I}_{\psi}$ be the sub-segment of $I_{\psi}$ in
which $\varphi$ and $a\,\psi$ have opposite signs. Then, using
\eqref{eq:P_a_def} and \eqref{eq:I_phi_inequality} and the fact that
$|r'-r|\le 1$,
\begin{equation}
  \min_{t\in \tilde{I}_\varphi} |P_a(t)| \ge \min_{t\in \tilde{I}_\varphi}\lvert \varphi(t)\rvert \ge c(w)\frac{n}{\sqrt{r(n+1-r)}}\label{eq:PaIvarphi}\end{equation}
and, similarly, using \eqref{eq:P_a_def},
\eqref{eq:I_psi_inequality} and \eqref{eq:subseteq1},
\begin{align}
\min_{t\in \tilde{I}_\psi} |P_a(t)| &\ge |a|\min_{t\in \tilde{I}_\psi}(1-\sgn(a)t)\lvert \psi(t)\rvert\geq\nonumber\\
&\ge
c(w)|a|(1-\max\{\sgn(a)\eta_{r'-1},\sgn(a)\eta_{r'}\})\left(\frac{n}{\sqrt{r(n+1-r)}}\right)^3.\label{eq:PaIpsi}
\end{align}
Observe that by our definitions,
\begin{equation*}
   \min(|\tilde{I}_\varphi|, |\tilde{I}_\psi|) \ge
   c(w)\frac{r(n+1-r)}{n^3}.
\end{equation*}
Note also that $\tilde{I}_{\varphi}$ and $\tilde{I}_{\psi}$ satisfy
the assumptions of Lemma \ref{lem:P'a_via_Pa} by
\eqref{eq:subseteq1}. Thus, by plugging \eqref{eq:PaIvarphi} and
\eqref{eq:PaIpsi}, respectively, in \eqref{eq:P'a_via_Pa}, and using
Proposition \ref{Gauss}, we have that
\begin{align}
 |P'_a(\xi_r(a))|&\ge c(w)\frac{n}{\sqrt{r(n+1-r)}}\sqrt{\frac{n}{\lambda(\xi_r(a))}}\min_{t\in \tilde{I}_\varphi} |P_a(t)|\ge \nonumber\\
&\geq
c(w)\frac{n^2}{r(n+1-r)}\cdot\sqrt{\frac{n}{\lambda(\xi_r(a))}}\geq
c(w)\sqrt{\frac{n}{\lambda(\xi_r(a))}}\cdot\frac{1}{\sqrt{1-\overline{\xi_r(a)}^2}}
\label{eq:right_of_max}
\end{align}
and
\begin{align}
|P'_a(\xi_r(a))|&\ge c(w)\frac{n}{\sqrt{r(n+1-r)}}\sqrt{\frac{n}{\lambda(\xi_r(a))}}\min_{t\in \tilde{I}_\psi} |P_a(t)| \ge\nonumber\\
&\ge c(w)\left(\frac{n^2}{r(n+1-r)}\right)^2\sqrt{\frac{n}{\lambda(\xi_r(a))}}|a|(1-\max\{\sgn(a)\eta_{r'-1},\sgn(a)\eta_{r'}\})\geq\nonumber\\
&\geq
c(w)\sqrt{\frac{n}{\lambda(\xi_r(a))}}\cdot\frac{|a|}{1+\sgn(a)\xi_r(a)}.
\label{eq:left_of_max}
\end{align}
The lemma follows by putting together \eqref{eq:left_of_max} and
\eqref{eq:right_of_max}.
\end{proof}

It is useful to note that combining Lemma~\ref{P_a} with
Proposition~\ref{Bernstein} we may obtain a lower bound on the
function $\lambda(x)$, matching the bound given by
Lemma~\ref{lambda} up to a constant depending on $w$. This is
embodied in the following corollary which is probably well-known to
experts in the field.
\begin{cor}\label{cor:lambda_lower} Suppose $w$ satisfies the conditions of Theorem~\ref{thm:Badkov_thm}
with $\alpha=0$. Then there exists a constant $c(w)>0$ such that
\begin{equation}\label{eq:lambda_lower_bound}
\lambda(x)\geq
\frac{c(w)}{n}\max\left\{\sqrt{1-x^2},\frac{1}{n}\right\},\quad
-1<x<1.
\end{equation}
\end{cor}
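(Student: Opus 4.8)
The plan is to combine the lower bounds on $|P_a'|$ and $|\psi'|$ at the nodes of the canonical representations, supplied by Lemma~\ref{P_a}, with the matching upper bounds of Proposition~\ref{Bernstein}. It is convenient to write $M_x := \min\{n,1/\sqrt{1-x^2}\}$, so that the asserted bound \eqref{eq:lambda_lower_bound} reads $\lambda(x)\ge c(w)/(nM_x)$, and to record the trivial inequality $(1-x^2)M_x^2\le 1$, immediate from $M_x\le 1/\sqrt{1-x^2}$. I would fix $-1<x<1$ and write $x=\xi_r(a)$ for the unique $1\le r\le n$ and $-\infty\le a\le\infty$, dealing with $|a|=\infty$ and $|a|<\infty$ in turn. (One may reduce the case $a<0$ to $a>0$ by replacing $w$ with the reversed weight $\tilde w(t)=w(-t)$ and using $\lambda^w(x)=\lambda^{\tilde w}(-x)$, but it is just as clean to keep $\sgn(a)$ throughout.)

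For $|a|=\infty$, that is $x=\eta_r$ with $1\le r\le n-1$, part 2 of Lemma~\ref{P_a} gives $|\psi'(x)|\ge c(w)\sqrt{n/\lambda(x)}\,/(1-x^2)$, hence $\lambda(x)\ge c(w)n/\big((1-x^2)^2|\psi'(x)|^2\big)$, while \eqref{eq:psi_deriv_estimate} gives $|\psi'(x)|\le C(w)\,nM_x^{5/2}$. Substituting and using $(1-x^2)^2M_x^5=\big((1-x^2)M_x^2\big)^2M_x\le M_x$ yields $\lambda(x)\ge c(w)/(nM_x)$, which is the claim in this case.

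For $|a|<\infty$, part 1 of Lemma~\ref{P_a} combined with the derivative bound \eqref{eq:P_a_deriv_estimate} gives
\[
\lambda(x)\ \ge\ \frac{c(w)\,n\,\max\left\{\frac{|a|}{1+\sgn(a)x},\ \frac{1}{\sqrt{1-\overline{x}^2}}\right\}^{2}}{C(w)\,n^{2}M_x^{3}\big(1+|a|\big((1-\sgn(a)x)M_x+\tfrac1n\big)\big)^{2}},
\]
so the whole case reduces to proving
\[
\max\left\{\frac{|a|}{1+\sgn(a)x},\ \frac{1}{\sqrt{1-\overline{x}^2}}\right\}\ \ge\ c(w)\,M_x\Big(1+|a|\big((1-\sgn(a)x)M_x+\tfrac1n\big)\Big),
\]
since plugging this back in leaves exactly $\lambda(x)\ge c(w)/(nM_x)$. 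To prove the last inequality I would expand its right-hand side into the three summands $c(w)M_x$, $c(w)|a|(1-\sgn(a)x)M_x^2$ and $c(w)|a|M_x/n$ and bound each, up to a $w$-dependent constant, by one of the two quantities in the maximum: the first is $\le 1/\sqrt{1-\overline{x}^2}$, because $1-\overline{x}^2\le C(w)/n^2$ when $x\notin[\xi_1(0),\xi_n(0)]$ (by Proposition~\ref{Gauss}, since then $\overline{x}\in\{\xi_1(0),\xi_n(0)\}$) and $\overline{x}=x$ otherwise; the second is $\le |a|/(1+\sgn(a)x)$, because $(1-\sgn(a)x)(1+\sgn(a)x)M_x^2=(1-x^2)M_x^2\le 1$; and the third is $\le |a|/(1+\sgn(a)x)$, because $1+\sgn(a)x\le 2$ and $M_x\le n$.

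The proof is short. The one place demanding care is the case $|a|<\infty$, where I must check that the maximum coming from Lemma~\ref{P_a} genuinely absorbs the factor $1+|a|(\cdots)$ present in \eqref{eq:P_a_deriv_estimate}, so that all dependence on $a$ cancels; this is precisely where the identity $(1-\sgn(a)x)(1+\sgn(a)x)=1-x^2$ and the bound $(1-x^2)M_x^2\le1$ enter, alongside the minor bookkeeping of the truncation $\overline{x}$ near $\pm1$.
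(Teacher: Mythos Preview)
Your proposal is correct and follows essentially the same approach as the paper's proof: combine the lower bound on $|P_a'|$ (resp.\ $|\psi'|$) from Lemma~\ref{P_a} with the upper bound from Proposition~\ref{Bernstein}, then cancel the common factor $\max\{\frac{|a|}{1+\sgn(a)x},\frac{1}{\sqrt{1-\overline{x}^2}}\}$ using the same three inequalities $M_x\le C(w)/\sqrt{1-\overline{x}^2}$, $(1-x^2)M_x^2\le 1$, and $M_x\le n$. The only difference is organizational: the paper first rewrites the Bernstein bound as $C(w)nM_x^{1/2}\max\{\cdots\}$ and then divides, whereas you divide first and verify the resulting inequality term by term.
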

\begin{proof} Suppose $x=\xi_r(a)$ for some $1\leq r\leq n$ and $-\infty<a<\infty$. By Lemma \ref{P_a},
\begin{equation}\label{eq:P_prime_a_lower_bound_in_cor}
  |P'_a(x)|\ge c(w)\sqrt{\frac{n}{\lambda(x)}}\max\left\{\frac{|a|}{1+\sgn(a)x},
    \frac{1}{\sqrt{1-\overline{x}^2}}\right\}.
\end{equation}
Observe that by Proposition~\ref{Gauss},
$\min\left\{n,\frac{1}{\sqrt{1-x^2}}\right\}\leq
\frac{C(w)}{\sqrt{1-\overline{x}^2}}$. Thus, by Proposition
\ref{Bernstein},
\begin{align*}|P_a'(x)|&\leq C(w)n\cdot\min\left\{n,\frac{1}{\sqrt{1-x^2}}\right\}^{3/2}\left(1+|a|\left((1-\sgn(a)x)\min\left\{n,\frac{1}{\sqrt{1-x^2}}\right\}+\frac{1}{n}\right)\right)\leq\\
&\leq C(w)n\cdot\min\left\{n,\frac{1}{\sqrt{1-x^2}}\right\}^{1/2}\left(\frac{1}{\sqrt{1-\overline{x}^2}}+|a|\left(\frac{1-\sgn(a)x}{1-x^2}+1\right)\right)\leq\\
&\leq
C(w)n\cdot\min\left\{n,\frac{1}{\sqrt{1-x^2}}\right\}^{1/2}\max\left\{\frac{1}{\sqrt{1-\overline{x}^2}},\frac{|a|}{1+\sgn(a)x}\right\}.
\end{align*}
Combined with \eqref{eq:P_prime_a_lower_bound_in_cor} this implies
\eqref{eq:lambda_lower_bound}. Now suppose $x=\eta_r$ for some
$1\leq r\leq n-1$. Then the result follows since by Lemma \ref{P_a},
$$|\psi'(x)|\geq c(w)\sqrt{\frac{n}{\lambda(x)}}\cdot\frac{1}{1-x^2},$$
and by Proposition \ref{Bernstein},
\begin{equation*}
|\psi'(x)|\leq C(w)
  n\cdot\min\left\{n,\frac{1}{\sqrt{1-x^2}}\right\}^{5/2}\leq  C(w)
 \frac{n}{1-x^2}\cdot\min\left\{n,\frac{1}{\sqrt{1-x^2}}\right\}^{1/2}.\qedhere
\end{equation*}
\end{proof}

\subsection{Estimating the interpolation polynomials}

In this section we explore the localization properties of the
interpolation polynomials $q_x$. Specifically, we show in the next
proposition that $q_x$ is everywhere bounded by a constant and is in
fact much smaller away from the point $x$. The proof of
Theorem~\ref{thm:Lipschitz} does not require the results of this
section but these results are used in the proofs of
Theorem~\ref{thm:abs_cont} and Theorem~\ref{thm:discont}.

\begin{propos}\label{propos:q bound for t far from x} Suppose $w$ satisfies the conditions of Theorem~\ref{thm:Badkov_thm}
with $\alpha=0$. Then there exists a constant $C(w)>0$ such that for
any $-1<t,x<1$, $x\neq t$, the following bounds hold
\begin{equation}\label{eq:q_bound_far2}q_x(t)\leq C(w),\end{equation}
and
\begin{equation}\label{eq:q_bound_far1}q_x(t)\leq\frac{C(w)}{n\max\left\{1,n\sqrt{1-t^2}\right\}(t-x)^2}.\end{equation}
\end{propos}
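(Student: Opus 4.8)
The plan is to bound $q_x(t)$ by using the explicit formula \eqref{eq:Lagrange_representation} together with the lower bound on $|P_a'|$ (or $|\psi'|$) at the root from Lemma~\ref{P_a} and the upper bound on $|P_a|$ (or $|\psi|$) from Corollary~\ref{cor:Badkov}. Write $x = \xi_r(a)$ (the case $|a|=\infty$, i.e.\ $x = \eta_r$, being handled identically with $\psi$ in place of $P_a$). From \eqref{eq:Lagrange_representation},
\begin{equation*}
q_x(t) = \frac{1+\sgn(a)t}{1+\sgn(a)x}\cdot\frac{P_a(t)^2}{(t-x)^2 P_a'(x)^2},
\end{equation*}
and the first factor is at most $2$ since $|t|,|x|<1$ and $\sgn(a)\in\{-1,0,1\}$. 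So everything reduces to estimating $P_a(t)^2/P_a'(x)^2$.

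First I would prove \eqref{eq:q_bound_far2}. By Proposition~\ref{Bernstein} applied to $P_a$ on, say, the interval of radius $\tfrac12(1-x^2)$ around $t$ — or more simply by the mean value theorem on the segment between $x$ and $t$ using the derivative bound \eqref{eq:P_a_deriv_estimate} together with the root separation consequences of Lemma~\ref{"erdos-turan"} for $|t-x|$ small, and a direct crude bound for $|t-x|$ bounded away from $0$ — one controls $|P_a(t)|$. The clean route: since $P_a(x)=0$, for $t$ in an interval around $x$ of length comparable to $r(n+1-r)/n^3$ we have $|P_a(t)| \le |t-x|\max|P_a'|$, and plugging in \eqref{eq:P_a_deriv_estimate} and the Lemma~\ref{P_a} lower bound $|P_a'(x)| \ge c(w)\sqrt{n/\lambda(x)}\max\{|a|/(1+\sgn(a)x), 1/\sqrt{1-\overline{x}^2}\}$ gives $q_x(t)\le C(w)$ there; for $t$ outside that interval, $(t-x)^2$ is bounded below by $c(w)(r(n+1-r)/n^3)^2$, while $P_a(t)^2$ is bounded above via \eqref{eq:phi_upper_bound}, \eqref{eq:psi_upper_bound} and \eqref{eq:P_a_deriv_estimate}-type bounds, and again $P_a'(x)^2$ is bounded below by Lemma~\ref{P_a}; combining with the lower bound $\lambda(x)\ge \tfrac{c(w)}{n}\max\{\sqrt{1-x^2},1/n\}$ from Corollary~\ref{cor:lambda_lower} yields \eqref{eq:q_bound_far2}.

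For the sharper bound \eqref{eq:q_bound_far1} I would instead estimate $|P_a(t)|$ directly by Corollary~\ref{cor:Badkov}: from \eqref{eq:P_a_def} and \eqref{eq:phi_upper_bound}, \eqref{eq:psi_upper_bound},
\begin{equation*}
|P_a(t)| \le |\varphi(t)| + |a|(1-\sgn(a)t)|\psi(t)| \le C(w)\min\Bigl\{n,\tfrac{1}{\sqrt{1-t^2}}\Bigr\}^{1/2}\Bigl(1 + |a|(1-\sgn(a)t)\min\bigl\{n,\tfrac{1}{\sqrt{1-t^2}}\bigr\}\Bigr).
\end{equation*}
For the denominator, Lemma~\ref{P_a} gives $P_a'(x)^2 \ge c(w)\tfrac{n}{\lambda(x)}\max\{|a|^2/(1+\sgn(a)x)^2, 1/(1-\overline{x}^2)\}$, and Corollary~\ref{cor:lambda_lower} gives $\tfrac{n}{\lambda(x)} \ge c(w)\min\{n,1/\sqrt{1-x^2}\}$ (using $\max\{\sqrt{1-x^2},1/n\}^{-1} \asymp \min\{n,1/\sqrt{1-x^2}\}$). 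The algebra then is to check that the ratio of the square of the above numerator to this denominator is at most $C(w)/(n\max\{1,n\sqrt{1-t^2}\})$ — equivalently $C(w)\min\{n,1/\sqrt{1-t^2}\}^{-1}n^{-1}$. This needs a comparison between the ``frequency'' $\min\{n,1/\sqrt{1-t^2}\}$ at $t$ and at $x$: since $|t-x|$ enters only through the $(t-x)^2$ factor already pulled out, the remaining inequality should follow from the elementary fact that $\min\{n,1/\sqrt{1-t^2}\} \le \min\{n,1/\sqrt{1-x^2}\} + n^2|t-x|$ — but when $|t-x|$ is not small this is weak, so one must treat two regimes: $|t-x|$ comparable to $1-x^2$ (so $1-t^2 \asymp 1-x^2$ or $t$ is ``deep'' and the bound is easy) and $|t-x|$ small. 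The main obstacle I anticipate is precisely this bookkeeping — tracking how the factor $\min\{n,1/\sqrt{1-t^2}\}$ varies relative to the value at $x$ across the full range of $t$, and making sure the $|a|$-dependent terms (which are controlled by the $\max\{\cdots\}$ in Lemma~\ref{P_a}) cancel correctly near the endpoint where $\sgn(a)x\to -1$. Everything else is assembling already-established estimates.
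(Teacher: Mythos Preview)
Your claim that the prefactor $\frac{1+\sgn(a)t}{1+\sgn(a)x}$ is at most $2$ is incorrect. While $1+\sgn(a)t<2$, the denominator $1+\sgn(a)x$ can be as small as order $n^{-2}$: for instance if $a>0$ and $r=1$ then $x=\xi_1(a)\in(\xi_1(0),\eta_1)$ and Proposition~\ref{Gauss} only guarantees $1+x\ge c(w)/n^2$. This is precisely the ``endpoint where $\sgn(a)x\to -1$'' that you flag later as a concern for the $|a|$-dependent terms, but you have already dismissed its contribution here. Your overall strategy --- combine Lemma~\ref{P_a} for $|P_a'(x)|$ with Corollary~\ref{cor:Badkov} for $|P_a(t)|$ --- may still succeed once this factor is tracked honestly (the term $|a|/(1+\sgn(a)x)$ in the maximum of Lemma~\ref{P_a} is exactly what would have to absorb it), but the case analysis is more delicate than you acknowledge and your sketch does not carry it out. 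A smaller slip: a \emph{lower} bound on $n/\lambda(x)$ requires the \emph{upper} bound on $\lambda(x)$ from Lemma~\ref{lambda}, not the lower bound of Corollary~\ref{cor:lambda_lower}.

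The paper's route is quite different and avoids Lemma~\ref{P_a} altogether. The key extra ingredient is Lemma~\ref{lem:q naive bound}: a short scaling argument (applying the quadrature $\Sigma_x$ to a dilation of $q_x$) yielding $q_x(t)\le\frac{M}{m}\cdot\frac{1+\sgn(x-t)x}{1+\sgn(x-t)t}$ whenever $0<m\le w\le M$. The proof of the proposition then splits on the size of $|a|$ relative to a threshold depending on $t$, and in each regime picks a comparison point $t_0$ (namely $\eta_{r'}$ when $|a|$ is small, $\xi_r(0)$ when $|a|$ is large) chosen so that one of $\varphi(t_0),\psi(t_0)$ vanishes and the other is bounded below by Theorem~\ref{thm:Badkov_thm}; the ratio $q_x(t)/q_x(t_0)$ is then controlled via \eqref{eq:Lagrange_representation} and $q_x(t_0)$ by the naive lemma. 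The constant bound~\eqref{eq:q_bound_far2} is finally deduced by combining~\eqref{eq:q_bound_far1}, the naive lemma for $t\in\bigl(\frac{x-1}{2},\frac{x+1}{2}\bigr)$, and a Markov-inequality argument when $x$ lies within $1/(4n^2)$ of an endpoint.
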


We start with the following lemma which is inspired by Erd\H os and
Lengyel \cite{Erdos-Lengyel}.

\begin{lemma}\label{lem:q naive bound} Suppose the weight function $w$ satisfies $0<m\le w\le M$ almost
everywhere. Then for every $-1<t,x<1$,
$$q_x(t)\leq\frac{M}{m}\cdot\frac{1+\sgn(x-t)x}{1+\sgn(x-t)t}.$$
\end{lemma}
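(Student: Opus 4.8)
The plan is to first bound $q_x(t)$ by the ratio $\lambda(x)/\lambda(t)$ of the two quadrature weights, and then to estimate that ratio by an affine change of variables that turns the optimal test polynomial at $t$ into an admissible one at $x$. We may assume $-1<t<x<1$: the case $t=x$ is immediate since $q_x(x)=1\le M/m$, and the case $t>x$ follows from the case $t<x$ by replacing $w$ with the reversed weight $\tilde{w}(s):=w(-s)$, which has the same bounds $m,M$, satisfies $q^{\tilde w}_{-x}(-t)=q^{w}_x(t)$ (using $I(-u)=I(u)$), and for which the right-hand side of the claim is invariant under $(x,t)\mapsto(-x,-t)$. Under the assumption $t<x$ the claimed bound reads $q_x(t)\le\frac{M}{m}\cdot\frac{1+x}{1+t}$.

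For the first step, recall that $q_x$ has degree at most $2n-1$, is non-negative on $[-1,1]$, and satisfies $\int_{-1}^1 q_x(s)w(s)\,ds=\lambda(x)$ by \eqref{eq:q_x_integral}. Evaluating this integral with the quadrature $\Sigma_t$, which has degree $2n-1$, positive weights, and $t$ as a node of weight $\lambda_t(t)=\lambda(t)$, gives
$$\lambda(x)=\sum_{u\in S_t}\lambda_t(u)\,q_x(u)\ge \lambda(t)\,q_x(t),$$
whence $q_x(t)\le \lambda(x)/\lambda(t)$.

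For the second step, let $\phi(s):=\tfrac{1+t}{1+x}(s-x)+t$, an increasing affine map. One checks directly that $\phi(-1)=-1$, $\phi(x)=t$, and $\phi(1)=\tfrac{1+t}{1+x}(1-x)+t<1$ (here $t<x$ is used), so $\phi$ maps $[-1,1]$ into $[-1,\phi(1)]\subseteq[-1,1]$ and $\phi^{-1}$ maps $[-1,\phi(1)]$ onto $[-1,1]$. Hence $g:=q_t\circ\phi$ has degree at most $2n-1$, is non-negative on $[-1,1]$ (since $q_t\ge0$ there), and satisfies $g(x)=q_t(t)=1$; evaluating $\int_{-1}^1 g\,w$ with $\Sigma_x$ therefore gives $\lambda(x)\le\int_{-1}^1 g(s)w(s)\,ds$. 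Substituting $v=\phi(s)$ and using $q_t\ge0$, $w\le M$ and $w\ge m$ a.e.\ on $[-1,1]$, together with \eqref{eq:q_x_integral},
$$\lambda(x)\le\int_{-1}^1 q_t(\phi(s))w(s)\,ds=\frac{1+x}{1+t}\int_{-1}^{\phi(1)}q_t(v)\,w(\phi^{-1}(v))\,dv\le\frac{(1+x)M}{1+t}\int_{-1}^1 q_t(v)\,dv\le\frac{(1+x)M}{(1+t)m}\,\lambda(t).$$
Combined with the first step this yields $q_x(t)\le\frac{M}{m}\cdot\frac{1+x}{1+t}$, as required.

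The step I expect to require the most thought is the choice of the map $\phi$: it must be an affine contraction of $[-1,1]$ into itself fixing the endpoint $-1$ and sending $x$ to $t$, and it is exactly the contraction factor $\tfrac{1+t}{1+x}$ that produces the gain $\tfrac{1+x}{1+t}$ upon changing variables. (For a direct treatment of $t>x$ one uses instead $\phi(s):=\tfrac{1-t}{1-x}(s-x)+t$, which fixes $+1$ and reproduces the factor $\tfrac{1-x}{1-t}=\tfrac{1+\sgn(x-t)x}{1+\sgn(x-t)t}$.) Everything else is routine bookkeeping with the quadrature identities already recorded.
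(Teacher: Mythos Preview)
Your proof is correct. The key device---the affine contraction $\phi$ fixing $-1$ and sending $x\mapsto t$---is the same one the paper uses, but the organization differs: the paper composes $q_x$ itself with $\phi$, sets $g=q_x\circ\phi$, observes $g(x)=q_x(t)$, and applies the single quadrature $\Sigma_x$ to obtain $\lambda(x)\,q_x(t)\le\int g\,w\le\frac{M}{m}\frac{1+x}{1+t}\,\lambda(x)$ directly. You instead split the argument into two steps, first proving the standalone inequality $q_x(t)\le\lambda(x)/\lambda(t)$ via $\Sigma_t$, and then bounding $\lambda(x)/\lambda(t)$ by composing $q_t$ (rather than $q_x$) with $\phi$ and applying $\Sigma_x$. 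Your route is slightly longer but yields the intermediate bound $q_x(t)\le\lambda(x)/\lambda(t)$, which is of independent interest; the paper's route reaches the conclusion in one pass without this detour.
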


\begin{proof}
We show the proof for $t\leq x$. The proof for $x\leq t$ is similar.
Let $g(u):=q_x\left(-1+\frac{1+t}{1+x}(1+u)\right)$. Since $\deg g=\deg q_x\leq 2n-1$ and $g, q_x\geq 0$ on $[-1,1]$ we get, using the quadrature
formula $\Sigma_x$, and \eqref{eq:q_x_integral},
\begin{align*}
&\lambda(x)q_x(t)=\lambda(x)g(x)\leq\sum_{u\in
S_x}\lambda_x(u)g(u)=\int_{-1}^1g(s)w(s)ds=\\
&=\int_{-1}^1q_x\left(-1+\frac{1+t}{1+x}(1+s)\right)w(s)ds=\frac{1+x}{1+t}\int_{-1}^{1-2\frac{x-t}{1+x}}q_x(\sigma)w\left(-1+\frac{1+x}{
1+t}(1+\sigma)\right)d\sigma\leq\\
&\leq \frac{1+x}{1+t}\cdot\frac{M}{m}\int_{-1}^{1-2\frac{x-t}{1+x}}q_x(\sigma)w(\sigma)d\sigma\leq \frac{1+x}{1+t}\cdot\frac{M}{m}\int_{-1}^1 q_x(\sigma)w(\sigma)d\sigma=\frac{1+x}{1+t}\cdot\frac{M}{m}\lambda(x).\qedhere
\end{align*}
\end{proof}

\begin{proof}[Proof of Proposition~\ref{propos:q bound for t far from x}]
We divide into three cases.
\begin{enumerate}
\item Suppose $x=\xi_r(a)$ for some $1\leq r\leq n$ and $|a|\leq\frac{1}{(1-\sgn(a)t)\min\left\{n,\frac{1}{\sqrt{1-t^2}}\right\}}$.

Let $t_0=\eta_{r'}$ where $1\leq r'\leq n-1$ is such that
$|r'-r|\leq 1$. By Theorem \ref{thm:Badkov_thm} and Proposition
\ref{Gauss},
$$\lvert P_a(t_0)\rvert=\lvert\varphi(t_0)\rvert=\lvert\varphi(t_0)\rvert+\sqrt{1-t_0^2}\lvert\psi(t_0)\rvert\geq \frac{c(w)}{(1-t_0^2)^{1/4}}$$
and by Corollary~\ref{cor:Badkov},
\begin{align*}&|P_a(t)|\leq C(w)\max\left\{\min\left\{n,\frac{1}{\sqrt{1-t^2}}\right\}^{1/2},|a|(1-\sgn(a)t)\min\left\{n,\frac{1}{\sqrt{1-t^2}}\right\}^{3/2}\right\}=\\
&=C(w)\min\left\{n,\frac{1}{\sqrt{1-t^2}}\right\}^{1/2}.
\end{align*}
Therefore, by \eqref{eq:Lagrange_representation},
\begin{equation*}
\frac{q_x(t)}{q_x(t_0)}=\frac{1+\sgn(a)t}{1+\sgn(a)t_0}\left(\frac{P_a(t)(t_0-x)}{P_a(t_0)(t-x)}\right)^2\leq
C(w)\frac{(1+\sgn(a)t)\min\left\{n,\frac{1}{\sqrt{1-t^2}}\right\}(t_0-x)^2}{(1+\sgn(a)t_0)\frac{1}{\sqrt{1-t_0^2}}(t-x)^2}.
\end{equation*}
Hence, using Proposition \ref{Gauss}, Lemma \ref{lem:q naive bound} and Lemma \ref{"erdos-turan"} we get
\begin{align}
&q_x(t)=\frac{q_x(t)}{q_x(t_0)}q_x(t_0)\leq  C(w)\frac{(1+\sgn(a)t)\min\left\{n,\frac{1}{\sqrt{1-t^2}}\right\}(t_0-x)^2}{(1+\sgn(a)t_0)\frac{1}{\sqrt{1-t_0^2}}(t-x)^2}\leq\nonumber\\
&\leq
C(w)\frac{(1+\sgn(a)t)\max\left\{\frac{1}{n^2},1-\sgn(a)x\right\}\max\left\{\frac{1}{n},\sqrt{1-x^2}\right\}}{n\max\left\{1,n\sqrt{1-t^2}\right\}(t-x)^2}.\label{eq:q_x_t_first_case}
\end{align}
The bound \eqref{eq:q_bound_far1} follows immediately.

To obtain \eqref{eq:q_bound_far2} assume first that
$-1+\frac{1}{4n^2}\le x\le 1-\frac{1}{4n^2}$. By
\eqref{eq:q_x_t_first_case},
\begin{align*}
&q_x(t)\leq
\frac{C(w)}{\max\left\{1,n\sqrt{1-t^2}\right\}}\frac{(1+\sgn(a)t)(1-\sgn(a)x)}{|t-x|}\cdot\frac{1-x^2}{|t-x|}.
\end{align*}
We deduce that $q_x(t)\leq C(w)$ when
$t\notin(\frac{x-1}{2},\frac{x+1}{2})$ since then
$|t-x|\geq\frac{1}{4}(1+\sgn(a)t)(1-\sgn(a)x)$ and
$|t-x|\geq\frac{1-x^2}{4}$ . If $t\in(\frac{x-1}{2},\frac{x+1}{2})$
then $q_x(t)\leq C(w)$ by Lemma \ref{lem:q naive bound}.

Assume now that $1-\frac{1}{4n^2}<x<1$ (the case
$-1<x<-1+\frac{1}{4n^2}$ is treated similarly). Let $M$ be the
maximum of $q_x$ on $[-1,1]$ and suppose it is obtained in $t_1$. If
$t_1<\frac{x-1}{2}$, then by \eqref{eq:q_bound_far1},
\begin{equation*}M=q_x(t_1)\leq\frac{C(w)}{n\max\left\{1,n\sqrt{1-t_1^2}\right\}(t_1-x)^2}\leq\frac{C(w)}{n}\leq C(w).\end{equation*}
If $\frac{x-1}{2}\leq t_1\leq \frac{x+1}{2}$ then $M=q_x(t_1)\leq
C(w)$ by Lemma \ref{lem:q naive bound}. Finally, if
$t_1>\frac{x+1}{2}$ then, since $\lvert q_x'\rvert\leq(2n-1)^2M$
everywhere in $(-1,1)$ by Markov's inequality (as in
\eqref{eq:Bernstein_Markov_inequality}),
$$\left\lvert q_x\left(\frac{x+1}{2}\right)\right\rvert\geq\lvert q_x(t_1)\rvert-(2n-1)^2M\left(t_1-\frac{x+1}{2}\right)\geq M-(2n-1)^2M\frac{1-x}{2}\geq\frac{M}{2}$$
and since we have already seen that $\lvert
q_x(\frac{x+1}{2})\rvert\leq C(w)$ we get that $M\leq 2C(w)$.
\item Suppose $x=\xi_r(a)$ for some $1\leq r\leq n$ and $|a|>\frac{1}{(1-\sgn(a)t)\min\left\{n,\frac{1}{\sqrt{1-t^2}}\right\}}$.

Let $t_0=\xi_r(0)$. By Theorem \ref{thm:Badkov_thm} and Proposition
\ref{Gauss},
$$\lvert P_a(t_0)\rvert=|a|(1-\sgn(a)t_0)\lvert\psi(t_0)\rvert\geq \frac{c(w)|a|(1-\sgn(a)t_0)}{(1-t_0^2)^{3/4}}$$
and by Corollary~\ref{cor:Badkov},
\begin{align*}&|P_a(t)|\leq C(w)\max\left\{\min\left\{n,\frac{1}{\sqrt{1-t^2}}\right\}^{1/2},|a|(1-\sgn(a)t)\min\left\{n,\frac{1}{\sqrt{1-t^2}}\right\}^{3/2}\right\}=\\
&=C(w)|a|(1-\sgn(a)t)\min\left\{n,\frac{1}{\sqrt{1-t^2}}\right\}^{3/2}.
\end{align*}
Therefore, by \eqref{eq:Lagrange_representation},
\begin{equation*}\frac{q_x(t)}{q_x(t_0)}=\frac{1+\sgn(a)t}{1+\sgn(a)t_0}\left(\frac{P_a(t)(t_0-x)}{P_a(t_0)(t-x)}\right)^2\leq C(w)\frac{(1-\sgn(a)t)(1-t^2)\min\left\{n,\frac{1}{\sqrt{1-t^2}}\right\}^3(t_0-x)^2}{(1-\sgn(a)t_0)\frac{1}{\sqrt{1-t_0^2}}(t-x)^2}.
\end{equation*}
Hence, using Proposition \ref{Gauss}, Lemma \ref{lem:q naive bound} and Lemma \ref{"erdos-turan"} we get
\begin{align}
&q_x(t)=\frac{q_x(t)}{q_x(t_0)}q_x(t_0)\leq  C(w)\frac{(1-\sgn(a)t)(1-t^2)\min\left\{n,\frac{1}{\sqrt{1-t^2}}\right\}^3(t_0-x)^2}{(1-\sgn(a)t_0)\frac{1}{\sqrt{1-t_0^2}}(t-x)^2}\leq\nonumber\\
&\leq
C(w)\frac{(1-\sgn(a)t)(1+\sgn(a)x)\frac{1-t^2}{\max\left\{\frac{1}{n^2},1-t^2\right\}}\max\left\{\frac{1}{n},\sqrt{1-x^2}\right\}}{n\max\left\{1,n\sqrt{1-t^2}\right\}(t-x)^2}.\label{eq:q_x_t_second_case}
\end{align}
The bound \eqref{eq:q_bound_far1} follows immediately.

To obtain \eqref{eq:q_bound_far2}, first note that if
$t\in(\frac{x-1}{2},\frac{x+1}{2})$ then $q_x(t)\le C(w)$ by
Lemma~\ref{lem:q naive bound}. Second note that if
$t\notin(\frac{x-1}{2},\frac{x+1}{2})$ then
$|t-x|\geq\frac{1}{4}(1-\sgn(a)t)(1+\sgn(a)x)$,
$|t-x|\geq\frac{1-x^2}{4}$ and $|t-x|\geq\frac{1-t^2}{2}$. Thus,
using \eqref{eq:q_x_t_second_case},
\begin{align*}
q_x(t)\leq
C(w)\frac{n\sqrt{1-t^2}\max\left\{\frac{1}{n},\sqrt{1-x^2}\right\}}{\max\left\{1,n\sqrt{1-t^2}\right\}\sqrt{|t-x|}}\le
C(w)\frac{\max\left\{\sqrt{1-t^2},\sqrt{1-x^2}\right\}}{\sqrt{|t-x|}}\le
C(w).
\end{align*}

\item Suppose $x=\eta_r$ for some $1\leq r\leq n-1$.

This case follows from case 2 above by noting that for every
$-1<t<1$,
\begin{equation*}
  q_{\eta_r}(t) =
  \frac{1-\eta_r}{1-t}\lim_{a\to\infty}q_{\xi_r(a)}(t) =
  \frac{1+\eta_r}{1+t}\lim_{a\to-\infty}q_{\xi_{r+1}(a)}(t).\qedhere
\end{equation*}
\end{enumerate}
\end{proof}

\section{Proof of Theorem~\ref{thm:Lipschitz} and Theorem~\ref{thm:abs_cont}}\label{sec:pi_derivative}

In this section we prove Theorem~\ref{thm:Lipschitz} and Theorem~\ref{thm:abs_cont}. We first explain how the proposition follows from several lemmas. The proof of these
lemmas is delayed to the next subsections.

Recall the definition \eqref{eq:index_fcn_def} of the index
function. In a similar manner to the definition \eqref{eq:q_x_def}
of the polynomial $q_x$, we define, for each $-1<x<1$, the
polynomial $p_{x}$ to be the unique polynomial satisfying the
following properties:
\begin{align}
  &\deg(p_x)\le\sum_{u\in S_x} I(x) - 2,\\
  &p_x(u)=\begin{cases} 1& u\in S_{x}\cap[-1,x]\\
  0&u\in
S_{x}\cap(x,1]
  \end{cases},\label{eq:p_x_value_at_roots}\\
  &p'_{x}(u)=0,\quad u\in
(S_{x}-\{x\})\cap(-1,1)\label{eq:p_x_prime_prop}.
\end{align}

\begin{figure}[t]
\centering
{\includegraphics[width=0.7\textwidth]{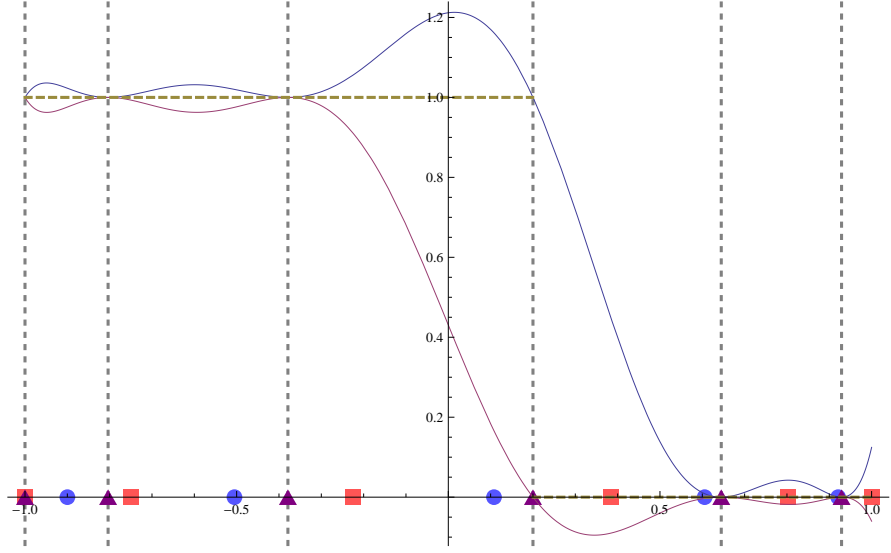}}
{\caption{A plot of the polynomials $p_x$ and $\underline{p}_x$ for
$n=5$, $x=0.2$ and the weight function $w(t)=\max\{1,1+4t\}$.
Observe that $p_x$ lies above and $\underline{p}_x$ lies below the
indicator function of the interval $[0,x]$, as proved in
Lemma~\ref{>0}. The circles on the axis denote the nodes of the
Gaussian quadrature, the squares denote the nodes of the Lobatto
quadrature and the triangles denote the nodes of the quadrature
formula $\Sigma_{0.2}$.\label{fig:p_x_and_p_underline_x_poly}}}
\end{figure}
We note that $\deg(p_x)=\sum_{u\in S_x} I(x) - 2$ unless $x\ge
\xi_n(0)$, in which case $p_x\equiv 1$.
Figure~\ref{fig:p_x_and_p_underline_x_poly} shows the graph of this
polynomial as well as the polynomial $\underline{p}_x$ defined
below. Using the quadrature formula $\Sigma_x$ and the fact that
$\deg(p_x)\le 2n-1$ it follows immediately that
\begin{equation*}
  \int_{-1}^1 p_x(t) w(t)dt = \pi(x).
\end{equation*}

Our first lemma relates the quantity $\pi'$, which we would like to
estimate, to the polynomial $p_x$.

\begin{lemma}\label{deriv-shift}
For every weight function $w$ and every differentiability point
$x\in(-1,1)$ of $\pi$ we have
\begin{equation}\label{eq:pi_minus_estimate}
\pi'(x)=-\lambda(x) p'_x(x) =
\lambda_{x}(-1)p'_{x}(-1)+\lambda_{x}(1)p'_{x}(1)-\int_{-1}^1p_{x}'(t)w(t)dt.
\end{equation}
\end{lemma}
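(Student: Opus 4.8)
The plan is to express $\pi(x)$ as an integral of a polynomial $p_x$ that depends on $x$, and then differentiate through this dependence. The first equality, $\int_{-1}^1 p_x(t)w(t)\,dt = \pi(x)$, is already established in the text via the quadrature $\Sigma_x$. The main point is to differentiate this identity in $x$, taking care of the fact that the polynomial $p_x$ itself varies with $x$.

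First I would fix a differentiability point $x$ of $\pi$ and analyze how $S_x$ and $p_x$ vary for nearby $y$. Writing $x = \xi_r(a)$, the nodes $\xi_i(b)$ are (real-)analytic functions of the parameter $b$ on intervals avoiding $0$ and $\pm\infty$, so in a neighborhood the coefficients of $p_y$ depend smoothly (indeed analytically, away from the exceptional roots) on $y$; the value conditions \eqref{eq:p_x_value_at_roots} and derivative conditions \eqref{eq:p_x_prime_prop} pin down $p_y$ uniquely as a polynomial of bounded degree, so one gets a smooth family $y\mapsto p_y$. Then $\pi(y) = \int_{-1}^1 p_y(t)w(t)\,dt$ and differentiating under the integral sign at $y=x$ gives $\pi'(x) = \int_{-1}^1 \frac{\partial}{\partial y}\big|_{y=x} p_y(t)\, w(t)\,dt$. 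The key is to identify the derivative polynomial $g(t) := \frac{\partial}{\partial y}\big|_{y=x} p_y(t)$. Differentiating the interpolation conditions: at a fixed node $u\in S_x-\{x\}$ in $(-1,1)$, since $p_y(u(y)) = $ const and $p'_y(u(y)) = 0$ along the moving node $u(y)$, one extracts $g(u) = 0$ and $g'(u) = 0$; at $u = x$ itself, $p_y(x) = 1$ for $y\ge x$ but the value interpolated at the moving node $x(y)$ jumps, and careful bookkeeping of which side $x$ lies on yields $g$ has a simple structure. I expect $g = -p'_x(x)\, q_x$ up to identifying constants, which explains the appearance of $q_x$.

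Actually the cleanest route is to avoid explicitly computing $g$: instead compute $\pi'(x)$ as a one-sided limit. For $h>0$ small, $\pi(x+h) - \pi(x) = \int_{-1}^1 (p_{x+h}(t) - p_x(t)) w(t)\,dt$; applying the quadrature $\Sigma_x$ to the polynomial $p_{x+h} - p_x$ (of degree $\le 2n-1$) gives $\pi(x+h)-\pi(x) = \sum_{u\in S_x} \lambda_x(u)(p_{x+h}(u) - p_x(u))$. For $u\in S_x-\{x\}$ in $(-1,1)$, Taylor expansion together with $p'_x(u) = 0$ shows $p_{x+h}(u) - p_x(u) = O(h^2)$ as $h\to 0$ (using that the node of $\Sigma_{x+h}$ near $u$ is within $O(h)$ of $u$ and that $p_{x+h}$ has a critical point there). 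For $u\in\{-1,1\}$, $p'_x(u)$ need not vanish so $p_{x+h}(u) - p_x(u) = h\,p'_x(u)(1 + o(1))$ plus the correction from the node moving; but the endpoints $\pm 1$ are fixed nodes of every canonical representation once they appear, so there is no node-motion correction and one gets exactly $\lambda_x(\pm 1)\,p'_x(\pm 1)$. For $u = x$, $p_{x+h}(x) - p_x(x) = p_{x+h}(x) - 1$, and since $x$ moves to a node $x(x+h)$ of $\Sigma_{x+h}$ where $p_{x+h}$ equals $1$, we get $p_{x+h}(x) - 1 = -p'_{x+h}(x(x+h))(x - x(x+h)) + O(h^2)$; combined with $\lambda_{x+h}(x(x+h))\to\lambda(x)$ and $x(x+h) - x \to$ (the rate of node motion) this produces the term $-\lambda(x)p'_x(x)$. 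Dividing by $h$ and letting $h\to 0^+$ (and checking the left limit agrees, using that $\pi$ is differentiable at $x$) yields $\pi'(x) = -\lambda(x)p'_x(x)$.

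For the second equality, I would use the quadrature identity directly: $\int_{-1}^1 p'_x(t)w(t)\,dt = \sum_{u\in S_x}\lambda_x(u) p'_x(u)$ since $\deg p'_x \le 2n-2 < 2n-1$. By \eqref{eq:p_x_prime_prop} all terms with $u\in(S_x-\{x\})\cap(-1,1)$ vanish, leaving $\int_{-1}^1 p'_x(t)w(t)\,dt = \lambda(x)p'_x(x) + \lambda_x(-1)p'_x(-1) + \lambda_x(1)p'_x(1)$. Rearranging gives $-\lambda(x)p'_x(x) = \lambda_x(-1)p'_x(-1) + \lambda_x(1)p'_x(1) - \int_{-1}^1 p'_x(t)w(t)\,dt$, which is exactly the second asserted equality. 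Note this second part is purely algebraic and requires no analysis; the substantive content is the first equality $\pi'(x) = -\lambda(x)p'_x(x)$.

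The main obstacle is the rigorous justification of differentiating $\pi(y) = \int p_y(t)w(t)\,dt$ through the $y$-dependence of $p_y$, specifically the bookkeeping of node motion when $x = \xi_r(a)$ and $x$ transitions between being an ``interior'' interpolation point (weight $1$ since $x\in[-1,x]$) of $\Sigma_y$ for $y\ge x$ and the node that is ``switching'' from value $1$ to value $0$. One must verify the family $y\mapsto p_y$ is genuinely $C^1$ near $x$ (which can fail at the finitely many exceptional points that are roots of the Gaussian or Lobatto quadrature — but those are precisely excluded since $\pi$ is analytic, hence differentiable, at all other points, and at a differentiability point in the problematic set one argues by the limit computation above which only needs one-sided control). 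Once the correct candidate derivative is in hand, matching it against the quadrature expansion is routine.
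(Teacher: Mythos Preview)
Your derivation of the second equality is correct and identical to the paper's: apply the quadrature $\Sigma_x$ to the polynomial $p'_x$ of degree at most $2n-2$, use \eqref{eq:p_x_prime_prop} to kill all interior terms, and rearrange.

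For the first equality your overall strategy---apply $\Sigma_x$ to $p_{x+h}-p_x$ and expand each node's contribution---can be made to work, but your bookkeeping at the endpoint nodes is wrong. If $u\in\{-1,1\}\cap S_x$ then (for $y$ near $x$, away from the principal representations) $u$ is also a node of $\Sigma_y$ and the interpolation condition forces $p_y(u)$ to equal the \emph{same} constant ($0$ or $1$) as $p_x(u)$; hence $p_{x+h}(u)-p_x(u)=0$ exactly, and these nodes contribute nothing to $\pi'(x)$. Your assertion that they contribute $\lambda_x(\pm1)p'_x(\pm1)$ confuses the $t$-derivative $p'_x$ with the parameter derivative $\partial_y p_y(u)$. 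There is also a sign slip in your Taylor expansion at $u=x$: the correct expansion is $p_{x+h}(x)-1=p'_{x+h}(x+h)\bigl(x-(x+h)\bigr)+O(h^2)=-h\,p'_{x+h}(x+h)+O(h^2)$, without the leading minus you wrote. With both corrections your computation does yield $\pi'(x)=-\lambda(x)p'_x(x)$.

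The paper takes a different and cleaner route. It introduces a family $p_x(y,\cdot)$ in which only the single node at $x$ is moved to $y$ while all other nodes remain fixed at $S_x\setminus\{x\}$; then $p_x(y,u)$ is literally independent of $y$ for every $u\in S_x\setminus\{x\}$, so applying $\Sigma_x$ to $\partial_y p_x(y,\cdot)\big|_{y=x}$ immediately isolates the term at $x$, and the chain rule applied to $p_x(z,z)\equiv 1$ converts that term to $-p'_x(x)$. The identification with $\pi'(x)$ is made not by tracking $\pi(y)=\int p_y\,w$ but via the touching inequality $\int_{-1}^1 p_x(y,t)w(t)\,dt\ge\pi(y)$ (valid because $p_x(y,\cdot)\ge\chi_{[-1,y]}$ on $[-1,1]$) with equality at $y=x$; since both sides are differentiable at $x$, their derivatives coincide. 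This device sidesteps all node-motion analysis and, in particular, works uniformly at Gaussian and Lobatto nodes, where your family $y\mapsto p_y$ changes structure (an endpoint enters or leaves $S_y$) and need not be $C^1$.
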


Our next lemmas estimate the terms on the right-hand side of
\eqref{eq:pi_minus_estimate}.

\begin{lemma}\label{lem:pre-prop3.3} Suppose $w$ satisfies the conditions of
Theorem~\ref{thm:Badkov_thm} with $\alpha=0$. Then there exists a
constant $C(w)>0$ such that for every $-1<x<\xi_n(0)$,
\begin{equation}\label{eq:lem7.2_lower_bound}\lambda_{x}(-1)p'_{x}(-1)+\lambda_{x}(1)p'_{x}(1)\geq p_{x}(1)w(1)-C(w)\frac{\lambda(x)}{1-x},\end{equation}
and for every $-1< x<1$,
\begin{equation}\label{eq:lem7.2_upper_bound}\lambda_{x}(-1)p'_{x}(-1)+\lambda_{x}(1)p'_{x}(1)\leq-\left(p_x(-1)-1\right)w(-1)+C(w)\lambda(x)\min\left\{\frac{1}{1+x},n^2\right\}.\end{equation}
\end{lemma}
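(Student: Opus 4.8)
Throughout write $x=\xi_r(a)$ for the unique $1\le r\le n$ and $-\infty\le a\le\infty$ with that property. I will use the basic properties of $p_x$ and $\underline p_x$ recorded in Lemma~\ref{>0} (namely $\underline p_x\le \mathbf 1_{[-1,x]}\le p_x$ on $[-1,1]$) together with the identity $p_x=\underline p_x+q_x$, which holds because both sides are the unique polynomial of degree $\le\sum_{u\in S_x}I(u)-2$ taking the value $1$ at $x$, the value $0$ at $S_x\setminus\{x\}$, and with vanishing derivative at the interior nodes other than $x$.

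The plan is to prove \eqref{eq:lem7.2_lower_bound} in full and then observe that \eqref{eq:lem7.2_upper_bound} is the mirror statement, obtained by interchanging the roles of the two endpoints. First I reduce \eqref{eq:lem7.2_lower_bound} to a bound on $q_x$. Since $p_x\ge0$ and, whenever $-1\in S_x$, $p_x(-1)=1$ with $p_x\ge\mathbf 1_{[-1,x]}$ near $-1$, we get $p_x'(-1)\ge0$; hence $\lambda_x(-1)p_x'(-1)\ge0$, and this term never hurts \eqref{eq:lem7.2_lower_bound}. For $-1<x<\xi_n(0)$ we have $p_x\not\equiv1$, so $p_x\ge0$ and $p_x(1)=0$ (when $1\in S_x$) force $p_x'(1)\le0$; moreover $\underline p_x\le0$ on $(x,1]$ gives $\underline p_x(1)\le0$ and (when $1\in S_x$) $\underline p_x'(1)\ge0$, so from $p_x=\underline p_x+q_x$,
\[
|p_x'(1)|\le|q_x'(1)|\quad(1\in S_x),\qquad p_x(1)\le q_x(1)\quad(1\notin S_x).
\]
Using $w\le M$ and the fact that $\lambda_x(1)=0$ when $1\notin S_x$, inequality \eqref{eq:lem7.2_lower_bound} therefore follows once one proves
\[
\lambda_x(1)\,|q_x'(1)|\le C(w)\,\frac{\lambda(x)}{1-x}\quad(1\in S_x),\qquad
q_x(1)\le C(w)\,\frac{\lambda(x)}{1-x}\quad(1\notin S_x).
\]

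These are read off from the Lagrange representation \eqref{eq:Lagrange_representation}. If $1\in S_x$ (so $a<0$, or $a=\infty$ for a Lobatto node) one differentiates to get $q_x'(1)=-P_a(1)^2\big/\big((1-x)^3P_a'(x)^2\big)$, respectively $q_x'(1)=-2\psi(1)^2\big/\big((1-x^2)(1-x)^2\psi'(x)^2\big)$; if $1\notin S_x$ (so $a\ge0$) one evaluates $q_x(1)=\tfrac{1+\sgn(a)}{1+\sgn(a)x}\,P_a(1)^2\big/\big((1-x)^2P_a'(x)^2\big)$, and crucially $P_a(1)=\varphi(1)$ here, because the factor $(1-t)$ annihilates the $\psi$-term. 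Now I insert the estimates $|\varphi(1)|\le C(w)\sqrt n$, $|\psi(1)|\le C(w)n^{3/2}$ (hence $|P_a(1)|\le C(w)\sqrt n\,(1+|a|n)$ for $a<0$) from Corollary~\ref{cor:Badkov} by continuity; $\lambda_x(1)\le C(w)/n^2$ from Lemma~\ref{lambda1}; and, decisively, the lower bounds $|P_a'(x)|\ge c(w)\sqrt{n/\lambda(x)}\,\max\bigl\{|a|/(1-x),\,1/\sqrt{1-\overline x^{\,2}}\bigr\}$ and $|\psi'(x)|\ge c(w)\sqrt{n/\lambda(x)}/(1-x^2)$ from Lemma~\ref{P_a}. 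After cancellation each required inequality reduces to an elementary one in $n$, $1\pm x$ and $|a|$, which one checks by selecting the convenient branch of the $\max$ (the $|a|/(1-x)$ branch once $|a|n\gtrsim1$, the $1/\sqrt{1-\overline x^{\,2}}$ branch otherwise), together with $1-x\ge c(w)/n^2$ (valid since $x<\xi_n(0)$, by Proposition~\ref{Gauss}) and $\lambda(x)\ge c(w)/n^2$ (Corollary~\ref{cor:lambda_lower}). In the residual regime where $x$ lies within $O(1/n^2)$ of an endpoint — where these formulas are nearly singular but $\lambda(x)\asymp1/n^2$ — the crude bound $q_x\le C(w)$ from Proposition~\ref{propos:q bound for t far from x} already gives $q_x(1)\le C(w)\le C(w)\lambda(x)n^2\le C(w)\lambda(x)/(1-x)$, and likewise closes the $|q_x'(1)|$ estimate.

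Inequality \eqref{eq:lem7.2_upper_bound} is proved the same way with $1$ and $-1$ interchanged: now $\lambda_x(1)p_x'(1)\le0$ is the harmless term, while $\lambda_x(-1)p_x'(-1)$ (nonzero only when $-1\in S_x$, in which case $1+x\ge c(w)/n^2$ automatically) is dominated by $\lambda_x(-1)q_x'(-1)\le C(w)\lambda(x)/(1+x)$, and when $-1\notin S_x$ one uses $(p_x(-1)-1)w(-1)\le w(-1)q_x(-1)$, invoking $q_x\le C(w)$ once $1+x\le1/n^2$; in every case the outcome is at most $C(w)\lambda(x)\min\{1/(1+x),n^2\}$, the cap $n^2$ being relevant only in that last regime (this is why \eqref{eq:lem7.2_upper_bound}, unlike \eqref{eq:lem7.2_lower_bound}, needs no restriction on $x$). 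The main obstacle is bookkeeping rather than ideas: $|a|$ can be unbounded (e.g. as $x\to-1$ along $\xi_1(a)$ with $a\to-\infty$) and must be tracked jointly with the distances $1\pm x$ so that the powers of $n$ coming from $|P_a(1)|$, $|\psi(1)|$ and the Markov-type content of $1/P_a'(x)^2$ cancel exactly; nothing past Lemma~\ref{P_a} is deep, but its lower bound on $|P_a'|$ (with the explicit form of $q_x$) is precisely what upgrades the naive Markov estimate $\lambda_x(\pm1)|p_x'(\pm1)|\le C(w)$ — too weak near the endpoints — to the sharp $C(w)\lambda(x)/(1\mp x)$.
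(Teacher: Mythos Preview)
Your proof is correct and follows essentially the same route as the paper: you use Lemma~\ref{>0} for the signs of $p_x'(\pm1)$ and $\underline p_x'(\pm1)$, reduce to bounding $q_x(\pm1)$ or $q_x'(\pm1)$ via $p_x=\underline p_x+q_x$, and then plug in the Lagrange representation \eqref{eq:Lagrange_representation}, Corollary~\ref{cor:Badkov} for $|\varphi(\pm1)|,|\psi(\pm1)|$, Lemma~\ref{lambda1} for $\lambda_x(\pm1)$, and Lemma~\ref{P_a} for $|P_a'(x)|$. The paper organizes the same computation into three preliminary claims (Claims~\ref{cl:p_endpoints}, \ref{cl:p'_endpoints}, \ref{cl:p'_endpoints_a=-infty}) and then a case split, but the ingredients are identical.

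One small difference worth noting: for the delicate edge case of \eqref{eq:lem7.2_upper_bound} when $r=1$, $a<0$ and $1+x\lesssim 1/n^2$, you invoke the uniform bound $q_x\le C(w)$ from Proposition~\ref{propos:q bound for t far from x}. The paper instead argues directly, using $|a|(1+x)=|\varphi(x)/\psi(x)|\ge c(w)/n$ (which follows from \eqref{eq:phi_psi_lower_bound_new} and \eqref{eq:psi_upper_bound} once $1-x^2$ is small enough) and then the $|a|$-branch of Lemma~\ref{P_a}. Both work; your route is shorter but costs the extra input of Proposition~\ref{propos:q bound for t far from x}, which the paper otherwise reserves for Theorems~\ref{thm:abs_cont} and~\ref{thm:discont}. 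Also, your ``residual regime'' remark for the \emph{lower} bound is in fact unnecessary: since $x<\xi_n(0)$ forces $1-x\ge c(w)/n^2$, and for $a\ge0$ one always has $x\ge\xi_1(0)$ so $\overline x=x$, the main branch-by-branch argument already covers every case there.
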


\begin{lemma}\label{step}
Suppose $w$ is an absolutely continuous weight function. For every
$-1<x<1$,
\begin{equation*}-\left(p_x(-1)-1\right)w(-1)-\int_{-1}^1 q_x(t)\bigl\lvert w'(t)\bigr\rvert
  dt\leq\int_{-1}^1p_{x}'(t)w(t)dt + w(x)\leq p_{x}(1)w(1)+\int_{-1}^1 q_x(t)\bigl\lvert w'(t)\bigr\rvert
  dt.\end{equation*}
\end{lemma}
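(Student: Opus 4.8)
The plan is to convert $\int_{-1}^1 p_x'(t)w(t)\,dt$ into boundary terms plus $-\int_{-1}^1 p_x(t)w'(t)\,dt$ by integration by parts, rewrite $w(x)$ via the fundamental theorem of calculus, and then control the remaining integral by the pointwise comparison $0\le p_x(t)-\mathbf 1_{[-1,x]}(t)\le q_x(t)$ on $[-1,1]$, where $\mathbf 1_{[-1,x]}$ denotes the indicator of $[-1,x]$.

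To establish this comparison I would bring in the polynomial $\underline p_x$, defined by the same degree bound and derivative conditions as $p_x$ but interpolating the indicator of the half-open interval $[-1,x)$ at the nodes of $\Sigma_x$, i.e. $\underline p_x(u)=1$ for $u\in S_x\cap[-1,x)$ and $\underline p_x(u)=0$ for $u\in S_x\cap[x,1]$ (so that $\int_{-1}^1\underline p_x(t)w(t)\,dt=\underline\pi(x)$). Since $q_x(x)=1$, $q_x$ vanishes at the other nodes of $\Sigma_x$, and $q_x'$ vanishes at the interior nodes other than $x$, the polynomial $q_x+\underline p_x$ satisfies every defining condition of $p_x$ and has degree at most $\sum_{u\in S_x}I(u)-2$; as the Hermite interpolation problem defining $p_x$ has a unique solution in that degree range, $p_x=q_x+\underline p_x$. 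Hence $p_x(t)-\mathbf 1_{[-1,x]}(t)=q_x(t)+\bigl(\underline p_x(t)-\mathbf 1_{[-1,x]}(t)\bigr)$ on $[-1,1]$, and the comparison follows from $q_x\ge 0$ together with the sandwich $\underline p_x(t)\le\mathbf 1_{[-1,x]}(t)\le p_x(t)$ on $[-1,1]$ — the fact that $p_x$ lies above and $\underline p_x$ below the indicator of $[-1,x]$, which is the content of Lemma~\ref{>0} and is proved by analyzing the signs of these interpolants between consecutive nodes. This sandwich is the only ingredient here that is not a routine computation.

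Granting it, the rest is short. Integration by parts (legitimate because $w$ is absolutely continuous and $p_x$ is a polynomial) gives $\int_{-1}^1 p_x'(t)w(t)\,dt=p_x(1)w(1)-p_x(-1)w(-1)-\int_{-1}^1 p_x(t)w'(t)\,dt$, while $w(x)=w(-1)+\int_{-1}^x w'(t)\,dt=w(-1)+\int_{-1}^1\mathbf 1_{[-1,x]}(t)w'(t)\,dt$. Adding these,
\[
  \int_{-1}^1 p_x'(t)w(t)\,dt+w(x)=p_x(1)w(1)-\bigl(p_x(-1)-1\bigr)w(-1)+\int_{-1}^1\bigl(\mathbf 1_{[-1,x]}(t)-p_x(t)\bigr)w'(t)\,dt.
\]
Evaluating the sandwich at $\pm1$ gives $p_x(1)\ge 0$ and $p_x(-1)\ge 1$, so, since $w\ge0$, both $p_x(1)w(1)$ and $\bigl(p_x(-1)-1\bigr)w(-1)$ are nonnegative; and by the comparison the last integral has absolute value at most $\int_{-1}^1\bigl|p_x(t)-\mathbf 1_{[-1,x]}(t)\bigr|\,|w'(t)|\,dt\le\int_{-1}^1 q_x(t)|w'(t)|\,dt$. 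Discarding the nonnegative term $-\bigl(p_x(-1)-1\bigr)w(-1)$ yields the upper bound $p_x(1)w(1)+\int_{-1}^1 q_x(t)|w'(t)|\,dt$, and discarding the nonnegative term $p_x(1)w(1)$ yields the lower bound $-\bigl(p_x(-1)-1\bigr)w(-1)-\int_{-1}^1 q_x(t)|w'(t)|\,dt$, which together are exactly the two inequalities of the lemma. (The same identity covers the degenerate case $x\ge\xi_n(0)$, where $p_x\equiv1$.)
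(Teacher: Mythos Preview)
Your proof is correct and follows essentially the same route as the paper's own argument: integrate by parts, split off $\int_{-1}^1\chi_{[-1,x]}(t)w'(t)\,dt=w(x)-w(-1)$, and bound the remainder $\int_{-1}^1(p_x-\chi_{[-1,x]})w'$ in absolute value by $\int_{-1}^1 q_x|w'|$ using the sandwich $\underline p_x\le\chi_{[-1,x]}\le p_x$ together with $p_x-\underline p_x=q_x$ from Lemma~\ref{>0} and \eqref{eq:q_as_difference}. Your derivation of $p_x=q_x+\underline p_x$ via uniqueness of the Hermite interpolant is exactly how the paper obtains \eqref{eq:q_as_difference}.
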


\begin{lemma}\label{one more step}
\begin{enumerate}
\item  Suppose $w$ satisfies the assumptions of Theorem~\ref{thm:Lipschitz}, and let $R$ and $m$ be the constants from these assumptions.
Then for every $-1<x<1$,
$$\int_{-1}^1 q_x(t)\bigl\lvert w'(t)\bigr\rvert
  dt\leq\frac{R}{m}\lambda(x).$$
\item Suppose $w$ satisfies the assumptions of Theorem~\ref{thm:abs_cont},
and let $p$ be the constant from these assumptions. Then for every
$-1<x<1$,
$$\int_{-1}^1 q_x(t)\bigl\lvert w'(t)\bigr\rvert
  dt\leq C(w)\lambda(x)^{1-\frac{1}{p}}.$$
\end{enumerate}
\end{lemma}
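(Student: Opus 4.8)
The plan is to reduce both parts to two facts that are already available: the identity $\int_{-1}^1 q_x(t)w(t)\,dt=\lambda(x)$ from \eqref{eq:q_x_integral}, and the non-negativity of $q_x$ on $[-1,1]$. Combined with the standing hypothesis $w\ge m$ these give at once the basic estimate
\begin{equation*}
  \int_{-1}^1 q_x(t)\,dt \le \frac1m\int_{-1}^1 q_x(t)w(t)\,dt = \frac{\lambda(x)}{m},
\end{equation*}
which will drive everything.

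For part (1), the hypothesis $|w(x)-w(y)|\le R\cdot(y-x)$ for $-1\le x<y\le 1$ just says that $w$ is Lipschitz with constant $R$, hence absolutely continuous with $|w'(t)|\le R$ for almost every $t$. Therefore
\begin{equation*}
  \int_{-1}^1 q_x(t)\bigl|w'(t)\bigr|\,dt \le R\int_{-1}^1 q_x(t)\,dt \le \frac{R}{m}\,\lambda(x),
\end{equation*}
which is exactly the asserted bound; nothing further is needed.

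For part (2) one can no longer pull $|w'|$ out of the integral, so I would instead apply Hölder's inequality with conjugate exponent $p':=p/(p-1)$, writing $\int_{-1}^1 q_x|w'| \le \|w'\|_{L_p[-1,1]}\bigl(\int_{-1}^1 q_x^{p'}\bigr)^{1/p'}$. To control the last factor I would invoke the uniform bound $q_x(t)\le C(w)$ on $(-1,1)$ supplied by Proposition~\ref{propos:q bound for t far from x} (inequality \eqref{eq:q_bound_far2}): since $p'\ge 1$ this yields $q_x(t)^{p'}\le C(w)^{p'-1}q_x(t)$, and then the basic estimate above gives $\int_{-1}^1 q_x(t)^{p'}\,dt\le C(w)\lambda(x)$. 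Plugging this back in and using $1/p'=1-1/p$ together with $\|w'\|_{L_p}\le C(w)$ produces
\begin{equation*}
  \int_{-1}^1 q_x(t)\bigl|w'(t)\bigr|\,dt \le C(w)\,\lambda(x)^{1-1/p},
\end{equation*}
as required.

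The argument is essentially routine; the only genuine ingredient is the uniform boundedness of $q_x$, but that is the content of Proposition~\ref{propos:q bound for t far from x} and already in hand, so there is no serious obstacle. The one point deserving a moment's care is that the constant $C(w)$ in part (2) must not secretly depend on $n$ or $x$; this is clear because every bound used — the identity \eqref{eq:q_x_integral}, the lower bound $w\ge m$, and the bound \eqref{eq:q_bound_far2} — is uniform in both $n$ and $x$.
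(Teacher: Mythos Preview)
Your proof is correct and follows essentially the same approach as the paper's: for part~(1) bound $|w'|$ by $R$ and use $w\ge m$ with \eqref{eq:q_x_integral}; for part~(2) apply H\"older's inequality and control $\|q_x\|_{p/(p-1)}$ via the uniform bound \eqref{eq:q_bound_far2} together with $\int q_x\le\lambda(x)/m$. The only cosmetic difference is that the paper writes the interpolation step as $\|q_x\|_{p'}\le(\max q_x)^{1/p}\bigl(\int q_x\bigr)^{1-1/p}$ rather than your pointwise $q_x^{p'}\le C(w)^{p'-1}q_x$, but these are the same estimate.
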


We are now prepared to prove Theorem~\ref{thm:Lipschitz} and Theorem~\ref{thm:abs_cont}.
\begin{proof}[Proof of Theorem~\ref{thm:Lipschitz} and Theorem~\ref{thm:abs_cont}]
We prove only the lower bounds. The proof of the upper bounds is similar and slightly
simpler.

When $x\ge \xi_n(0)$ the bounds follow by taking $C(w)$ large
enough, using the fact that $\pi$ is non-decreasing by
\eqref{eq:pi_underline_pi_lambda_def} so that $\pi'(x)\ge 0$, using
Corollary \ref{cor:lambda_lower} and using Proposition~\ref{Gauss}
to see that $\frac{\lambda(x)}{1-x}\geq c(w)$. Combining
Lemmas~\ref{deriv-shift}, \ref{lem:pre-prop3.3} and \ref{step},
$$\pi'(x)-w(x)\geq -C(w)\frac{\lambda(x)}{1-x}-\int_{-1}^1 q_x(t)\bigl\lvert w'(t)\bigr\rvert
  dt$$
for every differentiability point $-1<x<\xi_n(0)$ of $\pi$. The
stated lower bounds now follow from Lemma \ref{one more step}.
\end{proof}

In the next subsections we prove the above lemmas. To this aim we
introduce a second polynomial $\underline{p}_x$, whose properties we
now explain (see Figure~\ref{fig:p_x_and_p_underline_x_poly}).

We define, for each $-1<x<1$, the polynomial $\underline{p}_{x}$ to
be the unique polynomial satisfying the following properties:
\begin{align}
  &\deg(\underline{p}_x)\le\sum_{u\in S_x} I(x) - 2,\\
  &\underline{p}_x(u)=\begin{cases} 1& u\in S_{x}\cap[-1,x)\\
  0&u\in
S_{x}\cap[x,1]
  \end{cases},\label{eq:underline_p_x_value_at_roots}\\
  &\underline{p}'_{x}(u)=0,\quad u\in
(S_{x}-\{x\})\cap(-1,1)\label{eq:underline_p_x_prime_prop}.
\end{align}
Here, again, $\deg(\underline{p}_x) = \sum_{u\in S_x} I(x) - 2$
unless $x\le\xi_1(0)$, in which case $\underline{p}_x\equiv 0$.

Observe that $p_{x}-\underline{p}_{x}$ is a polynomial of degree
$\leq \sum_{u\in S_x} I(x) - 2$ satisfying
$(p_{x}-\underline{p}_{x})(x)=1$, $(p_{x}-\underline{p}_{x})(u)=0$
for every $u\in S_{x}-\{x\}$, and $(p_{x}-\underline{p}_{x})'(u)=0$
for every $u\in (S_{x}-\{x\})\cap(-1,1)$. Comparing with the
properties of $q_x$ following \eqref{eq:q_x_def} we conclude that
(see also Figures~\ref{fig:q_x} and
\ref{fig:p_x_and_p_underline_x_poly})
\begin{equation}\label{eq:q_as_difference}
p_{x}-\underline{p}_{x}=q_{x}.
\end{equation}

We write $\chi_A$ for the characteristic function of the set $A$.
\begin{lemma}\label{>0} Let $w$ be any weight function and let $-1<x<1$. Then
\begin{equation*}
  \underline{p}_{x}(t)\leq\chi_{[-1,x)}(t)\leq\chi_{[-1,x]}(t)\leq p_{x}(t)\quad -1\leq t\leq
  1.
\end{equation*}
In addition,
\begin{equation*}
\begin{aligned}
  &\underline{p}_{x}'(-1)\le 0\le p_{x}'(-1)&&\text{when $-1\in S_x$},\\
  &p_{x}'(-1)\le 0\le \underline{p}_{x}'(-1)&&\text{when $-1\notin S_x$},\\
  &p_{x}'(1)\le 0\le \underline{p}_{x}'(1)&&\text{when $1\in S_x$},\\
  &\underline{p}_{x}'(1)\le 0\le p_{x}'(1)&&\text{when $1\notin S_x$}.\\
\end{aligned}
\end{equation*}
\end{lemma}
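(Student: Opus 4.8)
The middle inequality $\chi_{[-1,x)}\le\chi_{[-1,x]}$ is trivial, so the content of the lemma is the two-sided bound $\chi_{[-1,x]}\le p_x$, $\underline p_x\le\chi_{[-1,x)}$ together with the four derivative inequalities at $\pm1$. The plan is first to reduce everything to statements about $p_x$ alone. Writing $\tilde w(t):=w(-t)$, the reflection $t\mapsto-t$ carries the canonical representation $\Sigma_x^w$ onto $\Sigma_{-x}^{\tilde w}$ (so $S_x^w=-S_{-x}^{\tilde w}$, and in particular $-1\in S_x^w$ exactly when $1\in S_{-x}^{\tilde w}$), the same symmetry already used for the identity $\pi^{\tilde w}(x)=\int_{-1}^1w-\underline\pi^w(-x)$ quoted in the introduction; hence, by uniqueness of the Hermite interpolation problem defining these polynomials, $\underline p_x^w(t)=1-p_{-x}^{\tilde w}(-t)$. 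Consequently it suffices to prove, for every weight function and every $-1<x<1$, that \emph{(a)} $p_x(t)\ge\chi_{[-1,x]}(t)$ for all $t\in[-1,1]$, and \emph{(b)} the one-sided derivative of $p_x$ at $-1$ is $\ge0$ when $-1\in S_x$ and $\le0$ when $-1\notin S_x$, together with the mirror statements at $t=1$. Indeed (a) applied to $w$ and to $\tilde w$ gives both inequalities of the lemma, and (b) gives all four derivative assertions.

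To prove (a) and (b) I would analyse the real zeros of $p_x$ and of $p_x-1$. Assume $x<\xi_n(0)$ (otherwise $p_x\equiv1$ and the lemma is trivial), so that $\deg p_x=\sum_{u\in S_x}I(u)-2$, and write $S_x=\{a_1<\dots<a_N\}$ with $a_p=x$. The polynomial $p_x'$ vanishes at every interior node other than $x$; comparing the number of these forced zeros with $\deg p_x'$ shows that $p_x'$ has at most $N-2$ further zeros in $\mathbb{C}$. On the other hand $p_x$ takes the value $1$ at $a_1,\dots,a_p$ and the value $0$ at $a_{p+1},\dots,a_N$ and is non-constant, so by Rolle's theorem each of the $N-2$ \emph{one-sided gaps} $(a_i,a_{i+1})$ lying inside $[-1,x]$ or inside $(x,1]$ contains a critical point of $p_x$. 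Thus the count is tight: $p_x'$ has exactly one zero, simple, in each one-sided gap, and no other non-forced zero — in particular no zero in the \emph{middle gap} $(x,a_{p+1})$, none in the half-gaps $(-1,a_1)$ and $(a_N,1)$, none at $x$, and none at $\pm1$; moreover each interior node $\ne x$ is a zero of exact multiplicity $2$ of $p_x$ or of $p_x-1$, while $x$ is a simple zero of $p_x-1$.

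Starting from $p_x(x)=1$ I would propagate this sign information outward. On $[x,a_{p+1}]$ there is no critical point, so $p_x$ decreases monotonically from $1$ to $0$; since the sign of $p_x$ is preserved across each (double) interior node and since each one-sided gap to the right of $x$ has a single extremum and no sign change, $p_x\ge0$ on $[x,a_N]$ with local minimum value $0$ at every node $>x$. Pushing leftward, $p_x-1<0$ just to the right of $x$ and $x$ is a simple zero of $p_x-1$, so $p_x-1>0$ just to the left of $x$, and the same mechanism (double zeros preserve the sign, one-sided gaps have no sign change) gives $p_x-1>0$ throughout each one-sided gap to the left of $x$. It remains to cross the half-gaps: on each half-gap $p_x$ is monotone, and — handling the degenerate case $p=1$, $-1\notin S_x$ by noting that the tight count forces $p_x'(x)\ne0$, hence $p_x'(x)<0$ — one finds $p_x>1$ just inside the left half-gap and $p_x>0$ just inside the right one, so monotonicity yields $p_x>1$ on $[-1,x]\setminus S_x$ and $p_x>0$ on $(x,1]\setminus S_x$. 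This is (a); and when $\pm1\notin S_x$ the monotonicity of $p_x$ on the adjacent half-gap is precisely the required sign of $p_x'(\pm1)$, while when $\pm1\in S_x$ the value of $p_x$ at that endpoint is $1$ or $0$ and (a) exhibits the endpoint as a one-sided extremum, again giving the sign of the one-sided derivative. This proves (b).

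The step I expect to be the main obstacle is the sign-propagation bookkeeping of the third paragraph: one must keep straight, for each consecutive node, whether it is a double zero of $p_x$ or of $p_x-1$ (equivalently, on which side of $x$ it lies), treat the half-gaps and the cases $\pm1\in S_x$ or not, and dispose of the degenerate configurations ($p=1$, $p=N$, $x\ge\xi_n(0)$), so that the chain ``constant sign on this gap — sign flips at the simple zero $x$ — sign preserved across each double zero'' closes up without a gap. By contrast the zero count of the second paragraph is a routine application of Rolle's theorem once $\deg p_x$ and its forced critical points are tallied, and the reflection reduction of the first paragraph is immediate from uniqueness of the interpolant.
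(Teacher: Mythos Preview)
Your proposal is correct and follows essentially the same approach as the paper's proof: reduce $\underline p_x$ to $p_x$ via the reflection $\underline p_x^w(t)=1-p_{-x}^{\tilde w}(-t)$, use Rolle's theorem together with the degree count to show that the forced zeros of $p_x'$ (at interior nodes $\ne x$) and the Rolle zeros (one in each ``one-sided'' gap) exhaust all zeros of $p_x'$ and are therefore simple, and then anchor the signs using $p_x(x)=1$. The paper's write-up compresses your entire third paragraph into the single sentence ``The statements concerning $p_x$ now follow since, by definition, $p_x(x)=1>0$,'' but the underlying sign-propagation is exactly what you spell out.
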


\begin{proof}
If $p_x\equiv 1$, i.e., $x\ge \xi_n(0)$, the claims relating to it
are trivial. Otherwise, by Rolle's theorem, $p'_{x}$ vanishes at
some point (strictly) between any two consecutive points of
$S_x\cap[-1,x]$ and any two consecutive points of $S_{x}\cap(x,1]$.
Together with the points of $(S_{x}-\{x\})\cap(-1,1)$ we obtain
$\sum_{u\in S_x} I(x) - 3$ distinct points in which $p'_{x}$
vanishes. Since $p'_{x}$ is a polynomial of degree $\sum_{u\in S_x}
I(x) - 3$ we conclude that these are all the points in which it
vanishes, and that it changes sign in each of them and in no other
point. The statements concerning $p_{x}$ now follow since, by
definition, $p_{x}(x)=1>0$.

The statements concerning $\underline{p}_{x}$ follow either by using
a similar argument, or by noting that $\underline{p}_{x}
(t)=1-\tilde{p}_{-x}(-t)$ where $\tilde{p}$ is the polynomial $p$
defined with respect to the reversed weight function
$\tilde{w}(t):=w(-t)$.
\end{proof}

\subsection{Proof of Lemma~\ref{deriv-shift}}
Fix $-1<x<1$ to be a differentiability point of $\pi$ (recall from
Section~\ref{sec:background} that $\pi$ is differentiable at all but
finitely many points of $(-1,1)$). For the proof of the lemma, we
generalize the definition of the polynomial $p_x$ to a one-parameter
family of polynomials $p_x(y,\cdot)$. Let $U\subset(-1,1)$ be an
open interval containing $x$ and not containing any other node of
the quadrature formula $\Sigma_x$. For every $y\in U$ we let
$p_x(y,\cdot)$ be the unique polynomial satisfying the following
properties:
\begin{align}
  &\deg(p_x(y,\cdot))\le\sum_{u\in S_x} I(x) - 2,\nonumber\\
  &p_x(y,u)=\begin{cases} 1& u\in (S_x-\{x\}))\cap[-1,y)\\
  1&u=y\\
  0&(S_{x}-\{x\})\cap(y,1]
  \end{cases},\label{eq:p_x_u_def}\\
  &p'_{x}(y,u)=0,\quad u\in
(S_{x}-\{x\})\cap(-1,1)\nonumber.
\end{align}

\begin{figure}[t]
\centering
{\includegraphics[width=0.7\textwidth]{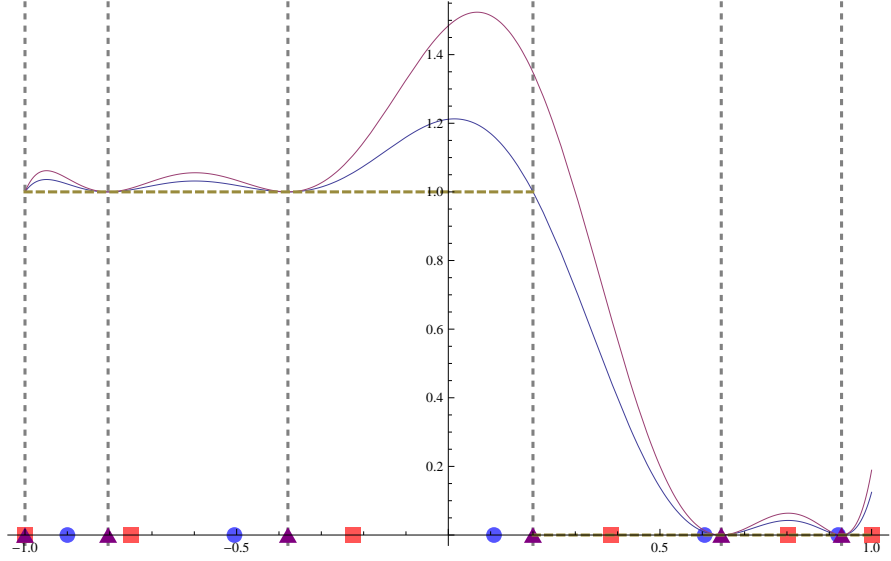}}
{\caption{A plot of the polynomials $p_x(\cdot)$ and $p_x(y,\cdot)$
for $n=5$, $x=0.2$, $y=0.3$ and the weight function
$w(t)=\max\{1,1+4t\}$. The circles on the axis denote the nodes of
the Gaussian quadrature, the squares denote the nodes of the Lobatto
quadrature and the triangles denote the nodes of the quadrature
formula $\Sigma_{0.2}$.\label{fig:p_x_and_p_x_y_poly}}}
\end{figure}
As for $p_x$, $\deg(p_x(y,\cdot))=\sum_{u\in S_x} I(x) - 2$ unless
$x\ge\xi_n(0)$, in which case $p_x(y,\cdot)\equiv 1$. The graph of
this polynomial is shown in Figure~\ref{fig:p_x_and_p_x_y_poly}.
With this definition, $p_x = p_x(x,\cdot)$. Clearly, $\deg
p_x(y,\cdot)\leq 2n-1$. In addition, it is not difficult to see that
$p_x(y,\cdot)\geq \chi_{[-1,y]}$ in $[-1,1]$ in the same manner as
in the proof of Lemma~\ref{>0}. Thus, applying the quadrature
formula $\Sigma_y$,
\begin{equation}\label{eq:p_x(y,t)}\int_{-1}^1
p_x(y,t)w(t)dt=\sum_{u\in S_y}\lambda_y(u)p_x(y,u)\geq\sum_{u\in
S_y\cap[-1,y]}\lambda_y(u)=\pi(y),\quad y\in U
\end{equation}
with equality when $y=x$. We claim that it follows that
\begin{equation}\label{eq:pi_prime_expr}
\pi'(x)=\left(\frac{d}{dy}\int_{-1}^1
p_x(y,t)w(t)dt\right)\Big|_{y=x}=\int_{-1}^1 \frac{\partial
p_x}{\partial y}(x,t)w(t)dt.
\end{equation}
To see this, observe first that $p_x(y,t)$ is a continuous rational
function of $y$ and $t$ in the rectangle $U\times[-1,1]$. Thus, the
validity of the differentiation under the integral sign follows from
the bounded convergence theorem. Second, note that the first
equality in \eqref{eq:pi_prime_expr} follows from the fact that
\eqref{eq:p_x(y,t)} holds in an \emph{open} neighborhood of $x$,
with equality at $x$.

Since $p_x(y,\cdot)$ is a polynomial of degree at most $2n-1$, it
follows that also $\frac{\partial p_x}{\partial y}(x,\cdot)$ is a
polynomial of degree at most $2n-1$, so we can use the quadrature
formula $\Sigma_{x}$ to calculate
\begin{equation}\label{eq:quadrature_for_p_x_deriv}
\int_{-1}^1\frac{\partial p_x}{\partial y}(x,t)w(t)dt=\sum_{u\in
S_{x}}\lambda_{x}(u)\frac{\partial p_x}{\partial y}(x,u).
\end{equation}
Now observe that for each $u\in S_x-\{x\}$, the definition
\eqref{eq:p_x_u_def} implies that $p_x(y,u)$ is constant when $y\in
U$. Thus all terms involving $u\neq x$ in the right-hand side of
\eqref{eq:quadrature_for_p_x_deriv} vanish. Using
\eqref{eq:pi_prime_expr}, we conclude that
\begin{equation}\label{eq:pi_prime_x_intermediate_expr}
  \pi'(x) = \lambda(x) \frac{\partial p_x}{\partial y}(x,x).
\end{equation}
Now note that, since $p_x(z,z)$ is identically 1 by
\eqref{eq:p_x_u_def}, then by the chain rule:
$$0=\frac{d p_x}{dz}(z,z)=\frac{\partial p_x}{\partial y}(z,z)+\frac{\partial p_x}{\partial u}(z,z).$$
In particular,
$$\frac{\partial p_x}{\partial y}(x,x)=-\frac{\partial p_x}{\partial u}(x,x)=-p_x'(x).$$
Together with \eqref{eq:pi_prime_x_intermediate_expr} this yields
the first equality in the statement of the lemma.

To obtain the second equality of the lemma, note that, since $\deg
p'_{x}\leq 2n-2$, we may use $\Sigma_{x}$ and
\eqref{eq:p_x_prime_prop} to obtain
$$\int_{-1}^1 p_x'(t)w(t)dt=\sum_{u\in S_{x}}\lambda_{x}(u)p_{x}'(u)=\lambda_{x}(-1)p'_{x}(-1)+\lambda(x)p'_{x}(x)+\lambda_{x}(1)p'_{x}(1).$$

\subsection{Proof of
Lemma~\ref{lem:pre-prop3.3}}\label{sec:proof_of_lemma_pre-prop3.3}
Fix a weight function $w$ satisfying the conditions of
Theorem~\ref{thm:Badkov_thm} with $\alpha=0$. In the following
claims we examine more closely the behaviour of the polynomial $p_x$
at the endpoints of the interval.
\begin{claim}\label{cl:p_endpoints} There exists a
constant $C(w)>0$ such that if $x=\xi_r(a)$ then
\begin{align}
p_{x}(1)&\leq C(w)\frac{\lambda(x)}{1-x},\quad\text{$1\leq r\leq
n-1$\label{eq:p_x_1_upper_bound_1}
and $0\leq a<\infty$},\\[5pt]
p_{x}(-1)-1&\leq C(w)\lambda(x)\min\left\{\frac{1}{1 +x},
n^2\right\},\quad\text{$1\leq r\leq n$ and $-\infty<a\leq
0$}.\label{eq:p_x_1_upper_bound_2}
\end{align}
\end{claim}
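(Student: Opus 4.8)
The plan is to reduce both bounds to estimates on the values $q_x(\pm1)$ of the interpolation polynomial from \eqref{eq:q_x_def}, and then to evaluate these via the explicit formula \eqref{eq:Lagrange_representation}. For the reduction, recall from \eqref{eq:q_as_difference} that $p_x=q_x+\underline{p}_x$, while Lemma~\ref{>0} gives $\underline{p}_x\le\chi_{[-1,x)}$ on $[-1,1]$; since $x\in(-1,1)$ this forces $\underline{p}_x(1)\le0$ and $\underline{p}_x(-1)\le1$, so $p_x(1)\le q_x(1)$ and $p_x(-1)-1\le q_x(-1)$. It thus suffices to show $q_x(1)\le C(w)\lambda(x)/(1-x)$ under the hypotheses of \eqref{eq:p_x_1_upper_bound_1}, and $q_x(-1)\le C(w)\lambda(x)\min\{1/(1+x),n^2\}$ under those of \eqref{eq:p_x_1_upper_bound_2}.

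For $q_x(1)$, write $x=\xi_r(a)$ with $1\le r\le n-1$ and $0\le a<\infty$. Evaluating \eqref{eq:Lagrange_representation} at $t=1$ and using $P_a(1)=\varphi(1)$ (from \eqref{eq:P_a_def}, as $a\ge0$) gives $q_x(1)=\frac{1+\sgn(a)}{1+\sgn(a)x}\cdot\frac{\varphi(1)^2}{(1-x)^2P'_a(x)^2}$. By \eqref{eq:phi_upper_bound} and continuity $\varphi(1)^2\le C(w)n$; by Lemma~\ref{P_a}, $P'_a(x)^2\ge c(w)n/(\lambda(x)(1-\overline{x}^2))$; and by \eqref{eq:xi_a_increases} the constraints $1\le r\le n-1$, $a\ge0$ place $x$ in $[\xi_1(0),\xi_n(0))$, so $\overline{x}=x$. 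Substituting, $q_x(1)\le C(w)\frac{(1+\sgn(a))(1-x^2)}{(1+\sgn(a)x)(1-x)^2}\lambda(x)$, and the prefactor is at most $C(w)/(1-x)$ by a one-line check: the quantity $(1+\sgn(a))(1-x^2)/((1+\sgn(a)x)(1-x))$ equals $1-x\le2$ when $a=0$ and $2$ when $a>0$.

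For $q_x(-1)$, write $x=\xi_r(a)$ with $1\le r\le n$ and $-\infty<a\le0$. Proposition~\ref{propos:q bound for t far from x} and continuity give $q_x(-1)\le C(w)$, and $\lambda(x)\ge c(w)/n^2$ by Corollary~\ref{cor:lambda_lower}, so $q_x(-1)\le C(w)n^2\lambda(x)$; this already settles the claim when $1+x\le 1/n^2$. When $1+x>1/n^2$ we repeat the computation above at $t=-1$, using $P_a(-1)=\varphi(-1)$ (as $a\le0$) and the $(1-\overline{x}^2)^{-1/2}$ term of Lemma~\ref{P_a}, to get $q_x(-1)\le C(w)\frac{(1-\sgn(a))(1-\overline{x}^2)}{(1+\sgn(a)x)(1+x)^2}\lambda(x)$, and it remains to bound $(1-\sgn(a))(1-\overline{x}^2)$ by $C(w)(1+\sgn(a)x)(1+x)$. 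If $\overline{x}=x$ this is elementary (the cases $a=0$ and $a<0$ go exactly as for $q_x(1)$). Otherwise $x<\xi_1(0)$, in which case $1-\overline{x}^2=1-\xi_1(0)^2\le C(w)/n^2<C(w)(1+x)$ by Proposition~\ref{Gauss} and the assumption $1+x>1/n^2$; since also $1-x\ge1$, the bound follows. The one delicate point is precisely this last situation, $x$ near $-1$: there $\overline{x}\ne x$, so the lower bound of Lemma~\ref{P_a} on $|P'_a(x)|$ degrades and one is forced to pass from the $1/(1+x)$ branch of the minimum to the $n^2$ branch, and one must check that the crossover, occurring where $1+x$ is of order $1/n^2$, is compatible with the endpoint estimate for $\xi_1(0)$ in Proposition~\ref{Gauss}.
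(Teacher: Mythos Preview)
Your reduction $p_x(1)\le q_x(1)$, $p_x(-1)-1\le q_x(-1)$ and the use of \eqref{eq:Lagrange_representation} together with the $1/\sqrt{1-\overline{x}^2}$ branch of Lemma~\ref{P_a} are exactly the paper's approach, and your treatment of \eqref{eq:p_x_1_upper_bound_1} and of \eqref{eq:p_x_1_upper_bound_2} when $\overline{x}=x$ matches the paper's proof line for line. Two small slips: in the $a=0$ check for $q_x(1)$ the quotient $(1-x^2)/(1-x)$ equals $1+x$, not $1-x$ (the bound $\le 2$ is unaffected); and when $x<\xi_1(0)$ you assert $1-x\ge 1$, which need not hold for small $n$, but Proposition~\ref{Gauss} gives $1-x>1-\xi_1(0)\ge c(w)$, which is all you need.

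The genuine divergence is in the boundary case $r=1$, $a<0$. The paper stays with Lemma~\ref{P_a} but invokes the $|a|/(1+\sgn(a)x)$ branch as well, obtaining $q_x(-1)\le C(w)\lambda(x)(1+x)^{-2}\min\{|a|^{-2},n^{-2}\}$, and then closes the case $1+x$ very small by the identity $|a|(1+x)=|\varphi(x)/\psi(x)|$ combined with \eqref{eq:psi_upper_bound} and \eqref{eq:phi_psi_lower_bound_new} to force $|a|(1+x)\ge c(w)/n$. You instead cap $q_x(-1)$ by the uniform bound of Proposition~\ref{propos:q bound for t far from x} and pair it with Corollary~\ref{cor:lambda_lower}. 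Your route is shorter, but note that the paper explicitly organises Section~\ref{sec:p_a_derivative} so that Proposition~\ref{propos:q bound for t far from x} is \emph{not} needed for Theorem~\ref{thm:Lipschitz}; the paper's argument keeps Claim~\ref{cl:p_endpoints} self-contained within the tools already used for the Lipschitz case, at the cost of the extra $|a|(1+x)\ge c(w)/n$ step.
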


\begin{proof}
By Lemma \ref{P_a},
$$|P'_a(x)|\geq c(w)\sqrt{\frac{n}{\lambda(x)(1-x^2)}},\quad x\in[\xi_1(0),\xi_n(0)].$$
Here, one may obtain some improvement to the following bounds when
$|a|$ is sufficiently large by using the full bound given by
Lemma~\ref{P_a}. However, these improvements do not seem to carry
over to small values of $|a|$. In addition, by the upper bound in
Corollary~\ref{cor:Badkov},
\begin{equation*}
\begin{aligned}
  |P_a(1)|&=|\varphi(1)|\leq C(w)\sqrt{n},&&0\leq a<\infty,\\
  |P_a(-1)|&=|\varphi(-1)|\leq C(w)\sqrt{n},&&-\infty<a\leq 0.
\end{aligned}
\end{equation*}
Thus \eqref{eq:p_x_1_upper_bound_1} follows using Lemma \ref{>0} and
\eqref{eq:Lagrange_representation} since, for $a>0$,
\begin{equation*}
  p_{x}(1)=q_{x}(1)+\underline{p}_{x}(1)\leq q_{x}(1)=\frac{2}{x+1}\left(\frac{P_a(1)}{(1-x)P_a'(x)}\right)^2\leq
  C(w)\frac{\lambda(x)}{1-x},
\end{equation*}
and similarly, if $a=0$,
\begin{equation}\label{eq:p_x_1_upper_bound_a_0}
  p_{x}(1)\leq q_{x}(1)=\left(\frac{P_a(1)}{(1-x)P_a'(x)}\right)^2\leq
  C(w)\frac{(1+x)\lambda(x)}{1-x}.
\end{equation}
In a similar manner, we obtain \eqref{eq:p_x_1_upper_bound_2} when
$2\le r\le n$ or when $r=1$ and $a=0$ since
\begin{equation}\label{eq:p_x_minus_1_estimate}
  p_{x}(-1)-1\leq q_{x}(-1)=\left(\frac{2}{1-x}\right)^{\chi_{(-\infty,0)}(a)}\left(\frac{P_a(-1)}{(1+x)P_a'(x)}\right)^2
  \leq
  C(w)\frac{\lambda(x)}{1+x}
\end{equation}
and since $\frac{1}{1+x}\le C(w) n^2$ by Proposition~\ref{Gauss}. It
remains to prove \eqref{eq:p_x_1_upper_bound_2} when $r=1$ and
$a<0$. For this case, by Lemma \ref{P_a} and
Proposition~\ref{Gauss},
\begin{equation*}
|P'_a(x)|\geq
c(w)\sqrt{\frac{n}{\lambda(x)}}\max\left\{\frac{|a|}{1-x},\frac{1}{\sqrt{1-\xi_1(0)^2}}\right\}\ge
c(w)\sqrt{\frac{n}{\lambda(x)}}\max\left\{|a|,n\right\}.
\end{equation*}
Thus, as in \eqref{eq:p_x_minus_1_estimate},
\begin{equation}\label{eq:p_x_minus_1_estimate_edge}
  p_{x}(-1)-1\leq q_{x}(-1)=\left(\frac{2}{1-x}\right)\left(\frac{P_a(-1)}{(1+x)P_a'(x)}\right)^2
  \leq C(w)\frac{\lambda(x)}{(1+x)^2}\min\left\{\frac{1}{|a|^2},\frac{1}{n^2}\right\}.
\end{equation}
Fix $\eps(w)>0$, small enough for the following calculation. We
consider separately two cases. First suppose that $x\le -1 +
\frac{\eps(w)}{n^2}$. By \eqref{eq:P_a_def} we have $|a|(1+x) =
|\varphi(x)/\psi(x)|$. Hence, using \eqref{eq:psi_upper_bound} and
\eqref{eq:phi_psi_lower_bound_new}, if $\eps(w)$ is sufficiently
small then
\begin{equation*}
  |a|(1+x) \ge \frac{c(w)}{n}.
\end{equation*}
Plugging this into \eqref{eq:p_x_minus_1_estimate_edge} proves
\eqref{eq:p_x_1_upper_bound_2} in this case. Now suppose that $x\ge
-1 + \frac{\eps(w)}{n^2}$. Here, \eqref{eq:p_x_1_upper_bound_2}
follows directly from \eqref{eq:p_x_minus_1_estimate_edge}.
\end{proof}

\begin{claim}\label{cl:p'_endpoints} There exists a
constant $C(w)>0$ such that if $x=\xi_r(a)$ for $1\leq r\leq n$ then
\begin{align}
-p'_{x}(1)&\leq C(w)n^2\frac{\lambda(x)}{1-x},\quad -\infty<a<0,\label{eq:p'_x_1_lower_bound}\\[5pt]
p'_{x}(-1)&\leq C(w)n^2\frac{\lambda(x)}{1+x},\quad
0<a<\infty\label{eq:p'_x_-1_upper_bound}.
\end{align}
\end{claim}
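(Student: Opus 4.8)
The plan is to reduce the desired bounds on $p_x'(\pm1)$ to bounds on $q_x'(\pm1)$, to evaluate $q_x'$ at the endpoints via the explicit formula \eqref{eq:Lagrange_representation}, and then to balance the size of $P_a$ at the opposite endpoint against the lower bound for $\lvert P_a'(x)\rvert$ from Lemma~\ref{P_a}. I carry this out for \eqref{eq:p'_x_1_lower_bound}, the case $-\infty<a<0$; the bound \eqref{eq:p'_x_-1_upper_bound} is proved identically, or deduced by passing to the reversed weight $\tilde{w}(t):=w(-t)$. When $a<0$ the node $1$ lies in $S_x$, so $\underline{p}_x'(1)\ge 0$ by Lemma~\ref{>0}; since $p_x=q_x+\underline{p}_x$ by \eqref{eq:q_as_difference} this gives $-p_x'(1)\le -q_x'(1)$, and $-q_x'(1)\ge 0$ because $q_x\ge 0$ on $[-1,1]$ with $q_x(1)=0$. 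Writing $x=\xi_r(a)$ and using \eqref{eq:Lagrange_representation} in the form $q_x(t)=\frac{1-t}{1-x}\bigl(\frac{P_a(t)}{(t-x)P_a'(x)}\bigr)^{2}$ (valid since $\sgn(a)=-1$), the term in $q_x'$ involving the derivative of the squared factor carries a factor $1-t$ and so vanishes at $t=1$, leaving
\[
  -q_x'(1)=\frac{1}{(1-x)^{3}}\left(\frac{P_a(1)}{P_a'(x)}\right)^{2}.
\]
Thus everything reduces to estimating $\lvert P_a(1)\rvert/\lvert P_a'(x)\rvert$.

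For the numerator, $P_a(1)=\varphi(1)-2a\psi(1)$ by \eqref{eq:P_a_def}, so the upper bounds of Corollary~\ref{cor:Badkov} (which hold at the endpoint by continuity, using $\min\{n,1/\sqrt{1-x^{2}}\}\le n$) give $\lvert P_a(1)\rvert\le C(w)\sqrt{n}\,(1+\lvert a\rvert n)$. For the denominator, Lemma~\ref{P_a} gives $\lvert P_a'(x)\rvert\ge c(w)\sqrt{n/\lambda(x)}\,\max\bigl\{\frac{\lvert a\rvert}{1-x},\frac{1}{\sqrt{1-\overline{x}^{2}}}\bigr\}$. I then split into two cases. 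If $\lvert a\rvert n>1$, I bound $\lvert P_a(1)\rvert\le C(w)\lvert a\rvert n^{3/2}$ and use the first term in the maximum; the powers of $\lvert a\rvert$ and of $n$ then cancel so as to produce exactly $-q_x'(1)\le C(w)n^{2}\lambda(x)/(1-x)$. If $\lvert a\rvert n\le 1$, I bound $\lvert P_a(1)\rvert\le C(w)\sqrt{n}$ and use the second term in the maximum, obtaining $-q_x'(1)\le C(w)\lambda(x)(1-\overline{x}^{2})/(1-x)^{3}$; it then remains to check the elementary inequality $(1-\overline{x}^{2})/(1-x)^{2}\le C(w)n^{2}$.

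This last inequality is where the geometry of the nodes enters. Since $a<0$, \eqref{eq:xi_a_increases} gives $x=\xi_r(a)<\xi_r(0)\le\xi_n(0)$, so Proposition~\ref{Gauss} yields $1-x\ge 1-\xi_n(0)\ge c(w)/n^{2}$. If $x\ge\xi_1(0)$ then $\overline{x}=x$ and $(1-x^{2})/(1-x)^{2}=(1+x)/(1-x)\le 2/(1-x)\le C(w)n^{2}$. If instead $x<\xi_1(0)$, which forces $r=1$, then $\overline{x}=\xi_1(0)$; here Proposition~\ref{Gauss} gives $1-\overline{x}^{2}=(1-\xi_1(0))(1+\xi_1(0))\le C(w)/n^{2}$ while $1-x\ge 1-\xi_1(0)\ge c(w)$, so the ratio is at most $C(w)/n^{2}\le C(w)n^{2}$. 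This proves \eqref{eq:p'_x_1_lower_bound}; the symmetric argument for \eqref{eq:p'_x_-1_upper_bound} uses instead that for $a>0$ one has $x=\xi_r(a)>\xi_r(0)\ge\xi_1(0)$, hence $1+x\ge 1+\xi_1(0)\ge c(w)/n^{2}$, together with $\lvert P_a(-1)\rvert=\lvert\varphi(-1)-2a\psi(-1)\rvert$ and the $\frac{a}{1+x}$-branch of Lemma~\ref{P_a}. The main point requiring care is arranging the split on $\lvert a\rvert$ so that the growth of $P_a$ at the far endpoint for large $\lvert a\rvert$ is absorbed precisely by the $\lvert a\rvert$-term in the lower bound of Lemma~\ref{P_a}; once that bookkeeping is in place, the remaining estimates are routine.
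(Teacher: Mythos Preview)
Your proof is correct and follows essentially the same route as the paper: the same reduction $-p_x'(1)\le -q_x'(1)$ via Lemma~\ref{>0} and \eqref{eq:q_as_difference}, the same closed-form evaluation of $-q_x'(1)$ from \eqref{eq:Lagrange_representation}, the same endpoint bound $|P_a(1)|\le C(w)\sqrt{n}(1+|a|n)$ from Corollary~\ref{cor:Badkov}, and the same lower bound on $|P_a'(x)|$ from Lemma~\ref{P_a}. The only cosmetic difference is that the paper carries the $\min$ coming from Lemma~\ref{P_a} through and splits into three ranges of $|a|$, whereas you split at $|a|n=1$ into two cases; both arrive at the same final inequality $(1-\overline{x}^2)/(1-x)^2\le C(w)n^2$ via Proposition~\ref{Gauss}.
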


\begin{proof}
By the upper bound in Corollary~\ref{cor:Badkov},
$$|P_a(\pm 1)|\leq C(w)\left(1+|a|n\right)\sqrt{n},\quad -\infty<a<\infty.$$
Thus, assuming that $-\infty<a<0$, Lemma \ref{>0},
\eqref{eq:Lagrange_representation} and Lemma~\ref{P_a} yield
\begin{align}
-p'_{x}(1)&=-q'_{x}(1)-\underline{p}'_{x}(1)\leq
-q'_{x}(1)=\frac{1}{1-x}\left(\frac{P_a(1)}{(1-x)P_a'(x)}\right)^2\le C(w)\frac{n(1+(-a)n)^2}{(1-x)^3\left(P_a'(x)\right)^2}\le\nonumber\\
&\le C(w)\frac{(1+(-a)n)^2 \lambda(x)}{(1-x)^3}
\min\left\{\frac{(1-x)^2}{a^2},
1-\overline{x}^2\right\}\le\nonumber\\&\le C(w)\frac{(1+(-a)n)^2
\lambda(x)}{(1-x)^2} \min\left\{\frac{1-x}{a^2}, 1+\overline{x}\right\}\le\nonumber\\
&\le C(w)\frac{\lambda(x)}{(1-x)^2}\cdot
\begin{cases}
  1+\overline{x}&|a|\le \frac{1}{n}\\
  a^2n^2(1+\overline{x})&\frac{1}{n}<|a|\le \sqrt{\frac{1-x}{1+\overline{x}}}\\
  n^2(1-x)&|a|>\sqrt{\frac{1-x}{1+\overline{x}}}
\end{cases},\label{eq:p'_1_bound}
\end{align}
where we recall the definition of $\overline{x}$ from
\eqref{eq:x_bar_def}. The bound \eqref{eq:p'_x_1_lower_bound} now
follows with the aid of Proposition~\ref{Gauss}.

In a similar manner, assuming that $0<a<\infty$, we have
\begin{align*}
p'_{x}(-1)&\leq
q'_{x}(-1)=\frac{1}{1+x}\left(\frac{P_a(-1)}{(1+x)P_a'(x)}\right)^2\le
C(w)\frac{(1+a n)^2 \lambda(x)}{(1+x)^2} \min\left\{\frac{1+x}{a^2},
1-\overline{x}\right\},
\end{align*}
yielding the bound \eqref{eq:p'_x_-1_upper_bound}.
\end{proof}

\begin{claim}\label{cl:p'_endpoints_a=-infty} There exists a
constant $C(w)>0$ such that if $x=\eta_r$ for $1\leq r\leq n-1$ then
\begin{align}
-p'_{x}(1)&\leq C(w)n^2\frac{\lambda(x)}{1-x},\label{eq:p'_1_eta_bound}\\
p'_{x}(-1)&\leq
C(w)n^2\frac{\lambda(x)}{1+x}\label{eq:p'_-1_eta_bound}.
\end{align}
\end{claim}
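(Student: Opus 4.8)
The plan is to mimic the proof of Claim~\ref{cl:p'_endpoints}, using that for $x=\eta_r$ the quadrature formula $\Sigma_x$ is the Lobatto quadrature, so that $-1,1\in S_x$ and $q_x$ is given by the $|a|=\infty$ case of \eqref{eq:Lagrange_representation}. First, since $-1\in S_x$ and $1\in S_x$, Lemma~\ref{>0} gives $p'_{x}(1)\le 0\le \underline{p}'_{x}(1)$ and $\underline{p}'_{x}(-1)\le 0\le p'_{x}(-1)$; combined with the identity $p_x=q_x+\underline{p}_x$ from \eqref{eq:q_as_difference} this yields $-p'_{x}(1)\le -q'_{x}(1)$ and $p'_{x}(-1)\le q'_{x}(-1)$. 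Hence it suffices to bound $-q'_{x}(1)$ and $q'_{x}(-1)$ from above.

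Second, by \eqref{eq:Lagrange_representation},
\[
  q_x(t)=\frac{1-t^2}{1-x^2}\left(\frac{\psi(t)}{(t-x)\psi'(x)}\right)^2,
\]
and since the factor $1-t^2$ vanishes at $t=\pm1$ (while the remaining factor is smooth there, as $x\neq\pm1$), differentiating at these two points only the linear factor contributes, so
\[
  -q'_{x}(1)=\frac{2\,\psi(1)^2}{(1+x)(1-x)^3\,\psi'(x)^2},\qquad
  q'_{x}(-1)=\frac{2\,\psi(-1)^2}{(1-x)(1+x)^3\,\psi'(x)^2};
\]
both right-hand sides are nonnegative, consistent with $q_x\ge0$ on $[-1,1]$ and $q_x(\pm1)=0$.

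Third, I would bound the numerators by $|\psi(\pm1)|\le C(w)n^{3/2}$, which follows from \eqref{eq:psi_upper_bound} and the continuity of the polynomial $\psi$, since $\min\{n,1/\sqrt{1-x^2}\}\to n$ as $x\to\pm1$. For the denominator I would use the second part of Lemma~\ref{P_a}, namely $|\psi'(\eta_r)|\ge c(w)\sqrt{n/\lambda(x)}\,/(1-x^2)$, that is $\psi'(x)^2\ge c(w)\,n/\bigl(\lambda(x)(1-x^2)^2\bigr)$. Substituting these into the displayed formulas and simplifying with $(1-x^2)^2=(1-x)^2(1+x)^2$ and $1\pm x\le 2$ gives
\[
  -q'_{x}(1)\le C(w)\,\frac{n^2\lambda(x)(1+x)}{1-x}\le C(w)\,\frac{n^2\lambda(x)}{1-x},\qquad
  q'_{x}(-1)\le C(w)\,\frac{n^2\lambda(x)(1-x)}{1+x}\le C(w)\,\frac{n^2\lambda(x)}{1+x},
\]
which are \eqref{eq:p'_1_eta_bound} and \eqref{eq:p'_-1_eta_bound}.

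There is no real obstacle here: the argument is a direct specialization of Claim~\ref{cl:p'_endpoints} to the limiting case $|a|=\infty$. One could alternatively pass to the limit $a\to\pm\infty$ in the estimates of Claim~\ref{cl:p'_endpoints} using the relations $q_{\eta_r}(t)=\frac{1-\eta_r}{1-t}\lim_{a\to\infty}q_{\xi_r(a)}(t)=\frac{1+\eta_r}{1+t}\lim_{a\to-\infty}q_{\xi_{r+1}(a)}(t)$ recorded in the proof of Proposition~\ref{propos:q bound for t far from x}, but the direct computation above is cleaner. The only points requiring a little care are the evaluation of $q'_{x}(\pm1)$, where the vanishing of $1-t^2$ makes the differentiation elementary, and the passage from the open-interval estimate \eqref{eq:psi_upper_bound} to the bound on $|\psi(\pm1)|$.
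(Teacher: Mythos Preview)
Your proof is correct and is essentially identical to the paper's: the paper likewise reduces to bounding $-q'_x(1)$ and $q'_x(-1)$ via Lemma~\ref{>0} and \eqref{eq:q_as_difference}, computes these derivatives exactly as you do from the $|a|=\infty$ case of \eqref{eq:Lagrange_representation}, then inserts $|\psi(\pm1)|\le C(w)n^{3/2}$ from Corollary~\ref{cor:Badkov} and the lower bound on $|\psi'(x)|$ from Lemma~\ref{P_a}.
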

\begin{proof} By the upper bound in Corollary~\ref{cor:Badkov},
$$|\psi(\pm1)|\leq C(w)n\sqrt{n}.$$
Thus, \eqref{eq:p'_1_eta_bound} follows using Lemma \ref{>0},
\eqref{eq:Lagrange_representation} and Lemma~\ref{P_a}, by
\begin{align}
-p'_{x}(1)&=-q'_{x}(1)-\underline{p}'_{x}(1)\le
-q'_{x}(1)=\frac{2}{1-x^2}\left(\frac{\psi(1)}{(1-x)\psi'(x)}\right)^2\le\nonumber \\
&\le C(w)\frac{n^3}{(1-x^2)(1-x)^2\left(\psi'(x)\right)^2}\le
C(w)n^2 \lambda(x)\frac{1+x}{1-x}\leq
C(w)n^2\frac{\lambda(x)}{1-x}.\label{eq:p_prime_x_1_upper_bound_calc}
\end{align}
In a similar manner, \eqref{eq:p'_-1_eta_bound} follows by
\begin{equation*}
p'_{x}(-1)\le
q'_{x}(-1)=\frac{2}{1-x^2}\left(\frac{\psi(-1)}{(1+x)\psi'(x)}\right)^2\leq
C(w)n^2\frac{\lambda(x)}{1+x}.\qedhere
\end{equation*}
\end{proof}

\begin{proof}[Proof of Lemma \ref{lem:pre-prop3.3}] We first prove \eqref{eq:lem7.2_lower_bound}. We consider separately three cases.
\begin{enumerate}
\item Suppose $x=\xi_r(a)$ for $0\leq a<\infty$, $1\leq r\leq
  n-1$. In this case, the claim follows since $p_{x}(1)\leq C(w)\frac{\lambda(x)}{1-x}$ by
  \eqref{eq:p_x_1_upper_bound_1}, $\lambda_{x}(1)=0$ since $1\notin
  S_x$ and $\lambda_x(-1)p'_{x}(-1)\ge 0$ since $\lambda_x(-1)=0$ if $a=0$
  and $p'_x(-1)\ge 0$ if $a>0$ by Lemma \ref{>0}.
\item Suppose $x=\xi_r(a)$ for $-\infty< a<0$, $1\leq r\leq n$. In this
case, the claim follows since $p_{x}(1)=0$ by
\eqref{eq:p_x_value_at_roots}, $\lambda_{x}(-1)=0$ since $-1\notin
S_x$, $\lambda_{x}(1)\leq C(w)\cdot\frac{1}{n^2}$ by Lemma
\ref{lambda1} and $-p'_{x}(1)\leq C(w)n^2\frac{\lambda(x)}{1-x}$ by
\eqref{eq:p'_x_1_lower_bound}.
\item Suppose $x=\eta_r$ for $1\leq r\leq n-1$. In this case, the claim follows since
$p_{x}(1)=0$, $p'_{x}(-1)\ge0$, $\lambda_{x}(1)\leq
C(w)\cdot\frac{1}{n^2}$ by Lemma \ref{lambda1} and $-p'_{x}(1)\leq
C(w)n^2\frac{\lambda(x)}{1-x}$ by \eqref{eq:p'_1_eta_bound}.
\end{enumerate}
We now prove \eqref{eq:lem7.2_upper_bound}. Again we consider
separately three cases. We appeal to Proposition~\ref{Gauss} to
justify that $\frac{1}{1+x}\le C(w)n^2$ in the first and third
cases.
\begin{enumerate}
\item Suppose $x=\xi_r(a)$ for $0<a<\infty$, $1\leq r\leq
  n$. In this case, the claim follows since $p_{x}(-1)=1$ by
\eqref{eq:p_x_value_at_roots}, $\lambda_{x}(1)=0$ since $1\notin
S_x$, $\lambda_{x}(-1)\leq C(w)\cdot\frac{1}{n^2}$ by Lemma
\ref{lambda1} and $p'_{x}(-1)\leq C(w)n^2\frac{\lambda(x)}{1+x}$ by
\eqref{eq:p'_x_-1_upper_bound}.
\item Suppose $x=\xi_r(a)$ for $-\infty< a\leq 0$, $1\leq r\leq n$. In this
case, the claim follows since $p_{x}(-1)-1\leq
C(w)\lambda(x)\min\left\{\frac{1}{1+x},n^2\right\}$ by
\eqref{eq:p_x_1_upper_bound_2}, $\lambda_{x}(-1)=0$ since $-1\notin
S_x$ and $\lambda_x(1)p'_{x}(1)\le 0$ since $\lambda_x(1)=0$ if
$a=0$ and $p'_x(1)\le 0$ if $a<0$ by Lemma \ref{>0}.
\item Suppose $x=\eta_r$ for $1\leq r\leq n-1$. In this case, the claim follows since
$p_{x}(-1)=1$, $p'_{x}(1)\le 0$, $\lambda_{x}(-1)\leq
C(w)\cdot\frac{1}{n^2}$ by Lemma \ref{lambda1} and $p'_{x}(-1)\leq
C(w)n^2\frac{\lambda(x)}{1+x}$ by
\eqref{eq:p'_-1_eta_bound}.\qedhere
\end{enumerate}
\end{proof}

\subsection{Proof of Lemma~\ref{step}}\label{sec:proof_of_lemma_step}

Integrating by parts, which is possible since $w$ is
absolutely continuous, we get
\begin{align*}
  &\int_{-1}^1p_x'(t)w(t)dt=\Bigl[p_x(1)w(1)-p_x(-1)w(-1)\Bigr]-\int_{-1}^1p_x(t)w'(t)dt=\\
  &=\Bigl[p_x(1)w(1)-p_x(-1)w(-1)\Bigr]-\int_{-1}^1\chi_{[-1,x]}(t)w'(t)dt-\int_{-1}^1\left(p_x-\chi_{[-1,x]}\right)(t)w'(t)dt=\\
&=\Bigl[p_x(1)w(1)-p_x(-1)w(-1)\Bigr]-\Bigl[w(x)-w(-1)\Bigr]-\int_{-1}^1\left(p_x-\chi_{[-1,x]}\right)(t)w'(t)dt=\\
  &=-w(x)+\Bigl[p_x(1)w(1)-\left(p_x(-1)-1\right)w(-1)\Bigr]-\int_{-1}^1\left(p_x-\chi_{[-1,x]}\right)(t)w'(t)dt.
\end{align*}
The lemma now follows since $p_x(1)\geq 0$ and $p_x(-1)\geq1$ by
Lemma \ref{>0}, and
\begin{align*}
  \biggl\lvert&\int_{-1}^1\left(p_{x}-\chi_{[-1,x]}\right)(t)w'(t)dt\biggr\rvert\leq\int_{-1}^1\left(p_{x}-\chi_{[-1,x]}\right)(t)\bigl\lvert w'(t)\bigr\rvert
  dt\leq\\
&\leq\int_{-1}^1(p_{x}-\underline{p}_x)(t)\bigl\lvert
w'(t)\bigr\rvert
  dt=\int_{-1}^1 q_x(t)\bigl\lvert w'(t)\bigr\rvert
  dt
\end{align*}
by another application of Lemma~\ref{>0} and
\eqref{eq:q_as_difference}.

\subsection{Proof of
Lemma~\ref{one more step}}\label{sec:proof_of_lemma_one_more_step}
If $w$ satisfies the assumptions of Theorem~\ref{thm:Lipschitz} then using \eqref{eq:q_x_integral} we get
\begin{equation*}
\int_{-1}^1 q_x(t)\bigl\lvert w'(t)\bigr\rvert
  dt\leq\frac{R}{m}\int_{-1}^1 q_x(t)w(t)dt=\frac{R}{m}\lambda(x).
\end{equation*}
If $w$ satisfies the assumptions of Theorem~\ref{thm:abs_cont} and
$w'\in L_p[-1,1]$ for some $p>1$, then using Holder's inequality,
the first part of Proposition \ref{propos:q bound for t far from x},
and \eqref{eq:q_x_integral},
\begin{align*}
&\int_{-1}^1 q_x(t)\bigl\lvert w'(t)\bigr\rvert
  dt\leq\lVert q_x\rVert_{\frac{p}{p-1}}\lVert w'\rVert_{p}\leq\left(\max_{-1\leq t\leq 1}q_x(t)\right)^{\frac{1}{p}}\left(\int_{-1}^1 q_x(t)dt\right)^{1-\frac{1}{p}}\lVert w'\rVert_{p}\leq\\
&\leq C(w)\left(\frac{1}{m}\int_{-1}^1
q_x(t)w(t)dt\right)^{1-\frac{1}{p}}\lVert
w'\rVert_{p}=C(w)\left(\frac{\lambda(x)}{m}\right)^{1-\frac{1}{p}}\lVert
w'\rVert_{p}\le C(w)\lambda(x)^{1-\frac{1}{p}}.
\end{align*}

\section{Discontinuous weights}\label{sec:thm_main2_proof}
In this section we prove Theorem~\ref{thm:discont}. The theorem
follows as an immediate consequence, using Lemma~\ref{lambda}, from
the following proposition.

\begin{propos}\label{prop:discont}
Suppose $w$ is a weight function on $[-1,1]$ satisfying the assumptions of Theorem
\ref{thm:discont}.
For every $\eps>0$ there exists an $n_0=n_0(w,\eps)$ such that if $n\ge
n_0$ then for every differentiability point $x\in(-1,1)$ of $\pi$,
\begin{align}
-C(w)\frac{\lambda(x)}{1-x}-\eps-C(w)\sum_{i=1}^L
\min\left\{\frac{1}{n^2(s_i-x)^2},1\right\}\leq\pi'(x)&-w(x)\leq\nonumber\\
\leq C(w)\lambda(x)\min\left\{\frac{1}{1+x},n^2\right\}+\eps&+C(w)\sum_{i=1}^L
\min\left\{\frac{1}{n^2(s_i-x)^2},1\right\}.\label{eq:required_pi_minus_estimate2_2}
\end{align}
\end{propos}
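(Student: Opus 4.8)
The plan is to run the argument of Section~\ref{sec:pi_derivative}, replacing the integration-by-parts step (Lemma~\ref{step}), which used absolute continuity of $w$ on all of $[-1,1]$, by a version that tracks the jumps of $w$. First I would record the needed structural facts. Writing $w=g+\sigma$, where $g$ is absolutely continuous on $[-1,1]$ and $\sigma$ is the step function whose only jumps are $\Delta_i:=w(s_i^+)-w(s_i^-)$ at the points $s_i$, we have $w'=g'\in L_1[-1,1]$, the bound $0<m\le w\le M:=\sup w<\infty$, and $\sum_{i=1}^L\lvert\Delta_i\rvert\le C(w)$. A short computation — splitting the square, treating the $g$-part as in the verification following Theorem~\ref{thm:Badkov_thm} and bounding the $\sigma$-part by $(2\lVert\sigma\rVert_\infty)^2$ times the measure of the $O(\delta)$-sized set of $\theta$ for which some $s_i$ lies between $\cos\theta$ and $\cos(\theta+\delta)$ — shows that $w$ satisfies the hypotheses of Theorem~\ref{thm:Badkov_thm} with $\alpha=0$ and $h=w$. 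Hence Lemma~\ref{deriv-shift}, Lemma~\ref{lem:pre-prop3.3}, Corollary~\ref{cor:lambda_lower}, Proposition~\ref{propos:q bound for t far from x}, Lemma~\ref{lambda} and Lemma~\ref{>0} are all available for $w$.

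Next I would prove the analogue of Lemma~\ref{step}: integrating by parts separately over each of $(-1,s_1),(s_1,s_2),\dots,(s_L,1)$ and summing, while using that $\int_{-1}^x w'(t)\,dt=w(x)-w(-1)-\sum_{i:\,s_i<x}\Delta_i$, one obtains, for every differentiability point $x\in(-1,1)\setminus\{s_1,\dots,s_L\}$ of $\pi$,
\begin{equation*}
\int_{-1}^1 p_x'(t)w(t)\,dt+w(x)=p_x(1)w(1)-\bigl(p_x(-1)-1\bigr)w(-1)-\sum_{i=1}^L\Delta_i\bigl(p_x(s_i)-\chi_{[-1,x]}(s_i)\bigr)-\int_{-1}^1\bigl(p_x-\chi_{[-1,x]}\bigr)(t)w'(t)\,dt.
\end{equation*}
By Lemma~\ref{>0} and \eqref{eq:q_as_difference}, $0\le p_x-\chi_{[-1,x]}\le q_x$ on $[-1,1]$, so the last integral is at most $\int_{-1}^1 q_x(t)\lvert w'(t)\rvert\,dt$ in absolute value and $\lvert p_x(s_i)-\chi_{[-1,x]}(s_i)\rvert\le q_x(s_i)$. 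Since the $s_i$ are fixed, once $n$ exceeds some $n_1(w)$ we have $n\sqrt{1-s_i^2}\ge1$ for all $i$, so Proposition~\ref{propos:q bound for t far from x} gives $q_x(s_i)\le C(w)\min\left\{\frac{1}{n^2(s_i-x)^2},1\right\}$; summing against $\lvert\Delta_i\rvert\le C(w)$ bounds the jump term by $C(w)\sum_{i=1}^L\min\left\{\frac{1}{n^2(s_i-x)^2},1\right\}$.

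The crux is to show $\int_{-1}^1 q_x(t)\lvert w'(t)\rvert\,dt<\eps$ for $n$ large, and this is exactly where the result becomes non-quantitative. For any $K>0$, split $w'=w'\chi_{\{\lvert w'\rvert\le K\}}+w'\chi_{\{\lvert w'\rvert>K\}}$; using $q_x\le\frac1m q_x w$ together with \eqref{eq:q_x_integral} and Lemma~\ref{lambda} (so $\lambda(x)\le C(w)/n$) on the first part, and the uniform bound $q_x\le C(w)$ from Proposition~\ref{propos:q bound for t far from x} on the second, one gets
\begin{equation*}
\int_{-1}^1 q_x(t)\lvert w'(t)\rvert\,dt\le\frac{K}{m}\lambda(x)+C(w)\int_{\{\lvert w'\rvert>K\}}\lvert w'(t)\rvert\,dt\le\frac{C(w)K}{n}+C(w)\int_{\{\lvert w'\rvert>K\}}\lvert w'(t)\rvert\,dt.
\end{equation*}
Since $w'\in L_1[-1,1]$, the last term tends to $0$ as $K\to\infty$ by absolute continuity of the Lebesgue integral; so choose $K=K(w,\eps)$ making it $<\eps/2$, then $n_0=n_0(w,\eps)\ge n_1(w)$ making $C(w)K/n<\eps/2$ for $n\ge n_0$.

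Finally I would combine the displayed identity with Lemma~\ref{deriv-shift} and Lemma~\ref{lem:pre-prop3.3} exactly as in the proof of Theorems~\ref{thm:Lipschitz} and \ref{thm:abs_cont}: the terms $p_x(1)w(1)$ and $(p_x(-1)-1)w(-1)$ telescope against the corresponding ones in Lemma~\ref{lem:pre-prop3.3}, leaving $\pi'(x)-w(x)$ squeezed between the two sides of \eqref{eq:required_pi_minus_estimate2_2}, with the case $x\ge\xi_n(0)$ handled (for the lower bound, and trivially for the upper bound since then $\pi'\equiv0$) as in Section~\ref{sec:pi_derivative} via $\lambda(x)/(1-x)\ge c(w)$, using Corollary~\ref{cor:lambda_lower} and Proposition~\ref{Gauss}. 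The case $x\in\{s_1,\dots,s_L\}$, should it occur at a differentiability point of $\pi$, plays no role in the deduction of Theorem~\ref{thm:discont} and may be set aside (or handled by a limiting argument). The main obstacle is the control of $\int q_x\lvert w'\rvert$ just described, since $w'$ is merely integrable and no rate is available; everything else is bookkeeping around the integration by parts and re-use of the estimates of Sections~\ref{sec:preliminary_estimates}–\ref{sec:pi_derivative}.
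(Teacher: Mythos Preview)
Your proposal is correct and follows the same architecture as the paper: replace Lemma~\ref{step} by a piecewise integration-by-parts that records the jump contributions $\sum_i\Delta_i(p_x-\chi_{[-1,x]})(s_i)$, bound these via $0\le p_x-\chi_{[-1,x]}\le q_x$ and Proposition~\ref{propos:q bound for t far from x}, and then feed everything into Lemmas~\ref{deriv-shift} and~\ref{lem:pre-prop3.3} exactly as before. The one genuine difference is in how you control $\int_{-1}^1 q_x\lvert w'\rvert$. The paper splits spatially, over $\{\lvert t-x\rvert\le\delta\}$ versus $\{\lvert t-x\rvert>\delta\}$: on the near part it uses the uniform bound $q_x\le C(w)$ together with absolute continuity of the integral of $\lvert w'\rvert$ on short intervals, and on the far part it uses the decay bound $q_x(t)\le C(w)/(n(t-x)^2)$ from \eqref{eq:q_bound_far1}. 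You instead truncate $w'$ at level $K$, bounding the small part by $\frac{K}{m}\lambda(x)\le C(w)K/n$ via \eqref{eq:q_x_integral} and Lemma~\ref{lambda}, and the large part by $C(w)\int_{\{\lvert w'\rvert>K\}}\lvert w'\rvert$ via \eqref{eq:q_bound_far2}. Both arguments are valid and equally non-quantitative; yours has the mild advantage of needing only the uniform bound \eqref{eq:q_bound_far2} and not the decay estimate \eqref{eq:q_bound_far1}, while the paper's localizes the use of absolute continuity of $\int\lvert w'\rvert$ to short intervals rather than to superlevel sets.
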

Figures~\ref{fig:pi_integral_and_pi_underline_discont},
\ref{fig:pi_derivative_discont} and \ref{fig:lambda_discont} show
the graphs of $\pi$, $\pi'-w$ and $\lambda$ for a discontinuous
weight function $w$ satisfying the assumptions of Theorem
\ref{thm:discont}.

\begin{figure}[t]
\centering
{\includegraphics[width=0.7\textwidth]{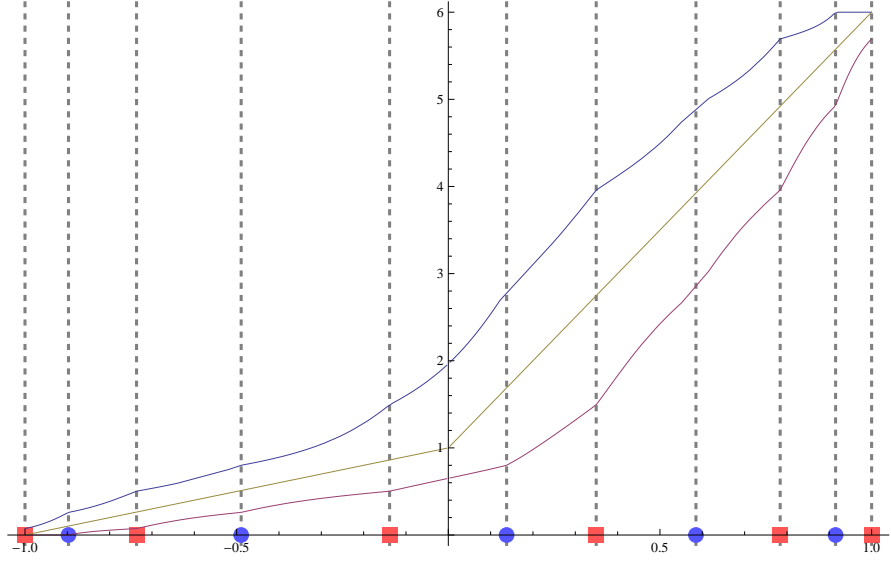}}
{\caption{A plot of $\pi$ (top graph), $\int_{-1}^x w(t)dt$ (middle
graph) and $\underline{\pi}$ (bottom graph) for $n=5$ and the weight
function $w$ defined by $w(t)=1$ if $t<0$ and $w(t)=5$ if $t\ge 0$.
Observe that $\pi$ lies above the graph of the integral, as the
Chebyshev-Markov-Stieltjes inequalities guarantee (see
\eqref{eq:Chebyshev_Markov_Stieltjes_intro}). The circles on the
axis denote the nodes of the Gaussian quadrature and the squares
denote the nodes of the Lobatto
quadrature.\label{fig:pi_integral_and_pi_underline_discont}}}
\end{figure}

\begin{figure}[t]
\centering
{\includegraphics[width=0.7\textwidth]{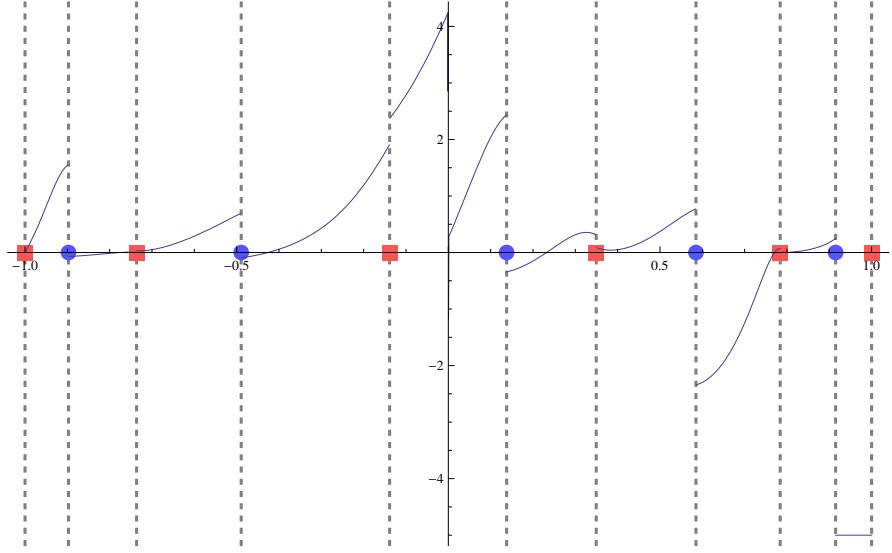}}
{\caption{A plot of the function $\pi' - w$ for $n=5$ and the weight
function $w$ defined by $w(t)=1$ if $t<0$ and $w(t)=5$ if $t\ge 0$.
The circles on the axis denote the nodes of the Gaussian quadrature
and the squares denote the nodes of the Lobatto quadrature. We note
that the jump at $t=0$ is exactly the jump of $w$ at this point as
we know that $\pi$ is analytic there, see
Section~\ref{sec:background}. \label{fig:pi_derivative_discont}}}
\end{figure}

\begin{figure}[t]
\centering {\includegraphics[width=0.7\textwidth]{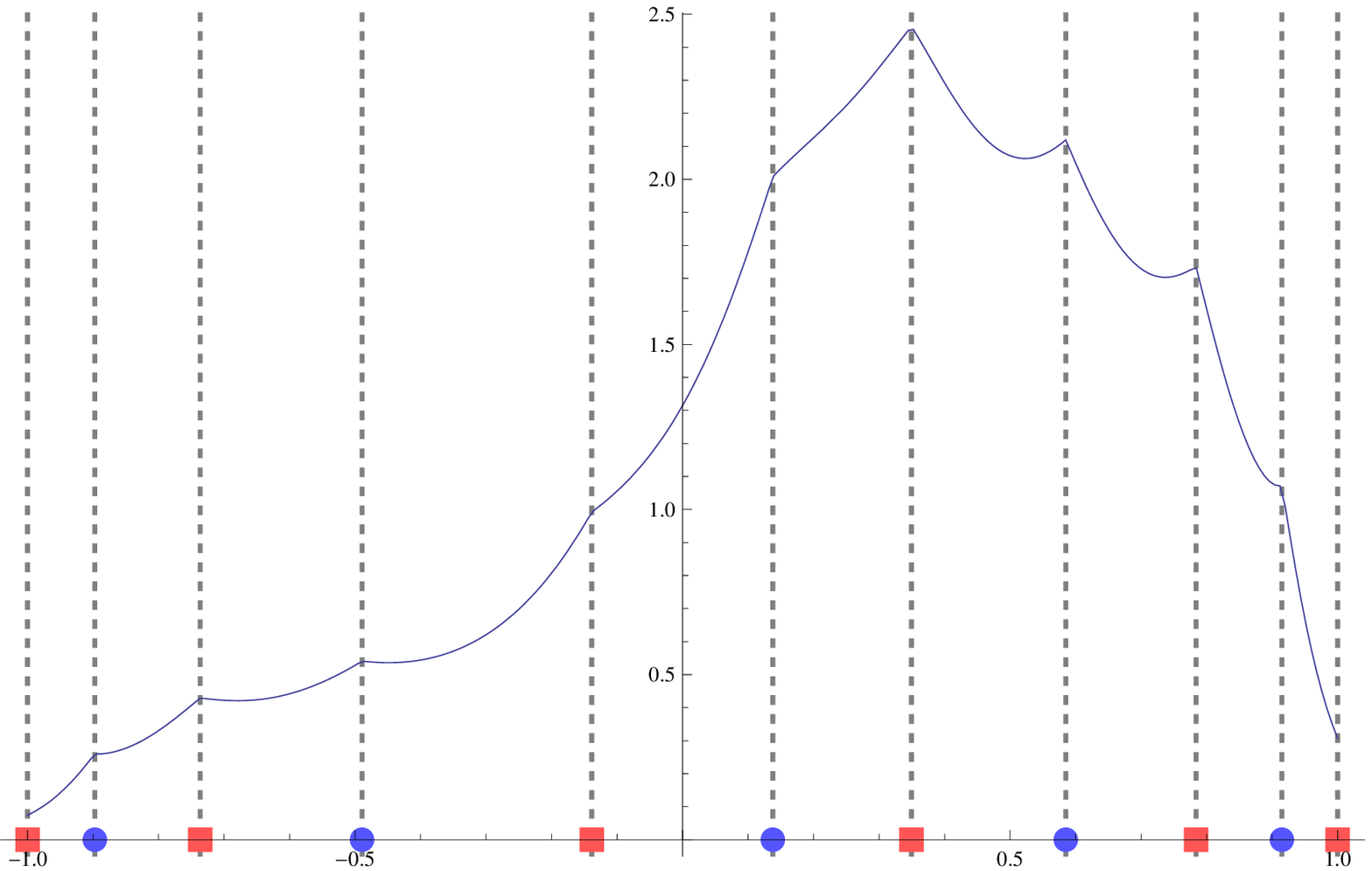}}
{\caption{A plot of the function $\lambda$ for $n=5$ and the weight
function $w$ defined by $w(t)=1$ if $t<0$ and $w(t)=5$ if $t\ge 0$.
The circles on the axis denote the nodes of the Gaussian quadrature
and the squares denote the nodes of the Lobatto
quadrature.\label{fig:lambda_discont}}}
\end{figure}

The proof of Proposition~\ref{prop:discont} follows the same
strategy as that of Theorem~\ref{thm:abs_cont}. Indeed, all the
ingredients used in the proof of Theorem~\ref{thm:abs_cont}, with
the exceptions of Lemma~\ref{step} and Lemma~\ref{one more step},
are proved for weight functions satisfying the assumptions of
Theorem~\ref{thm:Badkov_thm} with $\alpha=0$ and are thus valid also
for weight functions satisfying the assumptions of
Theorem~\ref{thm:discont}. We prove only the lower bound in
Proposition~\ref{prop:discont} as the proof of the upper bound is
similar. A replacement for Lemmas~\ref{step} and~\ref{one more step}
is provided by the next two lemmas.

We recall that by the assumptions of Theorem~\ref{thm:discont}, $w$
has left and right limits at each point $s$, which will be denoted
$w(s-)$ and $w(s+)$, respectively. We remind the reader of the
definition of $q_x$ from Section~\ref{sec:p_a_derivative} and the
definition of $p_x$ from Section~\ref{sec:pi_derivative}.
\begin{lemma}\label{step2} Suppose $w$ satisfies the assumptions of Theorem~\ref{thm:discont}. Then for every $-1<x<1$, $x\notin\{s_1,\ldots, s_L\}$,
\begin{equation}\label{eq:step2_replacement}
\int_{-1}^1p_x'(t)w(t)dt\leq-w(x)+p_x(1)w(1)-\sum_{i=1}^L\left(w(s_i+)-w(s_i-)\right)\left(p_x-\chi_{[-1,x]}\right)(s_i)+\int_{-1}^1 q_x(t)\cdot\bigl\lvert w'(t)\bigr\rvert
  dt.
\end{equation}
\end{lemma}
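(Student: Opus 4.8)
The plan is to run the same argument as in the proof of Lemma~\ref{step} — integration by parts followed by the decomposition $p_x=\chi_{[-1,x]}+(p_x-\chi_{[-1,x]})$ — but since $w$ is now only piecewise absolutely continuous we must split the integral at the discontinuities and track the boundary terms they produce. Set $s_0:=-1$ and $s_{L+1}:=1$, and let $w'$ denote the almost-everywhere derivative of $w$; by the hypotheses of Theorem~\ref{thm:discont} it belongs to $L_1$ of each interval $[s_j,s_{j+1}]$, hence to $L_1[-1,1]$, so that $\int_{-1}^1 q_x(t)\lvert w'(t)\rvert\,dt$ is finite because $q_x$ is a bounded polynomial. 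On each closed piece $[s_j,s_{j+1}]$ the function $w$ is absolutely continuous (with the convention that its value at $s_j$ is $w(s_j+)$ and at $s_{j+1}$ is $w(s_{j+1}-)$, while $w(s_0)=w(-1)$ and $w(s_{L+1})=w(1)$), so integration by parts is legitimate there.

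Carrying out the integration by parts on each piece and summing, the boundary terms telescope up to the jump at each $s_i$, giving
\[
\int_{-1}^1 p_x'(t)w(t)\,dt=p_x(1)w(1)-p_x(-1)w(-1)-\sum_{i=1}^L\bigl(w(s_i+)-w(s_i-)\bigr)p_x(s_i)-\int_{-1}^1 p_x(t)w'(t)\,dt.
\]
Next I would write $p_x=\chi_{[-1,x]}+(p_x-\chi_{[-1,x]})$ in the last integral. Since $x\notin\{s_1,\dots,s_L\}$, splitting $[-1,x]$ at the $s_i$ that lie below $x$ yields $\int_{-1}^x w'(t)\,dt=w(x)-w(-1)-\sum_{i=1}^L\chi_{[-1,x]}(s_i)\bigl(w(s_i+)-w(s_i-)\bigr)$, and substituting this and regrouping the jump contributions turns the display into the exact identity
\[
\int_{-1}^1 p_x'(t)w(t)\,dt=-w(x)+p_x(1)w(1)-\bigl(p_x(-1)-1\bigr)w(-1)-\sum_{i=1}^L\bigl(w(s_i+)-w(s_i-)\bigr)\bigl(p_x-\chi_{[-1,x]}\bigr)(s_i)-\int_{-1}^1\bigl(p_x-\chi_{[-1,x]}\bigr)(t)w'(t)\,dt.
\]

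Finally I would estimate the last two terms. The term $-(p_x(-1)-1)w(-1)$ is non-positive, since $p_x(-1)\ge\chi_{[-1,x]}(-1)=1$ by Lemma~\ref{>0} and $w(-1)\ge m>0$, so it can be dropped. For the remaining integral, exactly as in Lemma~\ref{step} one has $0\le p_x-\chi_{[-1,x]}\le p_x-\underline{p}_x=q_x$ on $[-1,1]$ by Lemma~\ref{>0} and \eqref{eq:q_as_difference}, whence $\bigl\lvert\int_{-1}^1(p_x-\chi_{[-1,x]})(t)w'(t)\,dt\bigr\rvert\le\int_{-1}^1 q_x(t)\lvert w'(t)\rvert\,dt$; inserting these two facts into the identity gives \eqref{eq:step2_replacement}. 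The only step demanding genuine care is the bookkeeping of the jump corrections — in particular the fact that $\int_{-1}^x w'$ is not $w(x)-w(-1)$ but differs from it by the jumps of $w$ inside $(-1,x)$, which is exactly what combines with the jump terms from the integration by parts to produce the clean coefficient $(p_x-\chi_{[-1,x]})(s_i)$ — but once this is handled the lemma reduces to the computation already done for Lemma~\ref{step} and presents no further difficulty.
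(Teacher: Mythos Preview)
Your proposal is correct and follows essentially the same approach as the paper: piecewise integration by parts to produce the jump terms, the decomposition $p_x=\chi_{[-1,x]}+(p_x-\chi_{[-1,x]})$, the computation of $\int_{-1}^x w'(t)\,dt$ with its jump corrections, and finally dropping $-(p_x(-1)-1)w(-1)\le 0$ via Lemma~\ref{>0} and bounding the remaining integral by $\int_{-1}^1 q_x|w'|$ via Lemma~\ref{>0} and \eqref{eq:q_as_difference}. The only cosmetic difference is that the paper carries the decomposition through each subinterval separately before summing, whereas you sum first and then decompose, but the resulting identity and the estimates are identical.
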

\begin{proof}
Using integration by parts on the interval $[-1,s_1]$, which is
possible since $w$ is absolutely continuous on $[-1,s_1]$ when
interpreting $w(s_1)$ as $w(s_1-)$, we have
\begin{multline*}
\int_{-1}^{s_1}p_x'(t)w(t)dt=\Bigl[p_x(s_1)w(s_1-)-p_x(-1)w(-1)\Bigr]-\int_{-1}^{s_1}p_x(t)w'(t)dt=\\
=\Bigl[p_x(s_1)w(s_1-)-p_x(-1)w(-1)\Bigr]-\int_{-1}^{s_1}\chi_{[-1,x]}(t)w'(t)dt-\int_{-1}^{s_1}\Bigl(p_x-\chi_{[-1,x]}\Bigr)(t)w'(t)dt.
\end{multline*}
 Similarly,
\begin{multline*}
\int_{s_L}^1p_x'(t)w(t)dt=\Bigl[p_x(1)w(1)-p_x(s_L)w(s_L+)\Bigr]-\int_{s_L}^1p_x(t)w'(t)dt=\\
=\Bigl[p_x(1)w(1)-p_x(s_L)w(s_L+)\Bigr]-\int_{s_L}^1\chi_{[-1,x]}(t)w'(t)dt-\int_{s_L}^1\Bigl(p_x-\chi_{[-1,x]}\Bigr)(t)w'(t)dt
\end{multline*}
and, for every $2\leq i\leq L$,
\begin{multline*}
\int_{s_{i-1}}^{s_i}p_x'(t)w(t)dt=\Bigl[p_x(s_i)w(s_i-)-p_x(s_{i-1})w(s_{i-1}+)\Bigr]-\int_{s_{i-1}}^{s_i}p_x(t)w'(t)dt=\\
=\Bigl[p_x(s_i)w(s_i-)-p_x(s_{i-1})w(s_{i-1}+)\Bigr]-\int_{s_{i-1}}^{s_i}\chi_{[-1,x]}(t)w'(t)dt-\int_{s_{i-1}}^{s_i}\Bigl(p_x-\chi_{[-1,x]}\Bigr)(t)w'(t)dt.
\end{multline*}
Summing these $L+1$ equalities and noticing that
\begin{align*}
\int_{-1}^1 \chi_{[-1,x]}(t)w'(t)dt&=w(x)-w(-1) -\sum_{i=1}^L
(w(s_i+)-w(s_i-))\chi_{[-1,x]}(s_i)
\end{align*}
we get
\begin{align*}
\int_{-1}^1p_x'(t)w(t)dt=&-w(x)+\left(1-p_x(-1)\right)w(-1)+p_x(1)w(1)-\\
&-\sum_{i=1}^L\left(w(s_i+)-w(s_i-)\right)\left(p_x-\chi_{[-1,x]}\right)(s_i)-\int_{-1}^1\bigl(p_x-\chi_{[-1,x]}\bigr)(t)w'(t)dt.
\end{align*}
The lemma now follows since $p_x(-1)\geq1$ by
Lemma \ref{>0}, and
\begin{align*}
  \biggl\lvert&\int_{-1}^1\left(p_{x}-\chi_{[-1,x]}\right)(t)w'(t)dt\biggr\rvert\leq\int_{-1}^1\left(p_{x}-\chi_{[-1,x]}\right)(t)\bigl\lvert w'(t)\bigr\rvert
  dt\leq\\
&\leq\int_{-1}^1(p_{x}-\underline{p}_x)(t)\bigl\lvert
w'(t)\bigr\rvert
  dt=\int_{-1}^1 q_x(t)\bigl\lvert w'(t)\bigr\rvert
  dt
\end{align*}
by another application of Lemma~\ref{>0} and
\eqref{eq:q_as_difference}.
\end{proof}

\begin{lemma}\label{one more step2}
Suppose $w$ satisfies the assumptions of Theorem~\ref{thm:discont}.
For every $\eps>0$ there exists an $n_0=n_0(w,\eps)$ such that if
$n\ge n_0$ then for every $-1<x<1$,
$$\int_{-1}^1 q_x(t)\bigl\lvert w'(t)\bigr\rvert
  dt\leq \eps.$$
\end{lemma}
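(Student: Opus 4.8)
The plan is to combine the two bounds on $q_x$ from Proposition~\ref{propos:q bound for t far from x} with the integrability properties of $w'$. Since $w$ is absolutely continuous on each of the finitely many intervals $[-1,s_1),(s_1,s_2),\ldots,(s_L,1]$, we have $w'\in L_1[-1,1]$, so $\int_{-1}^1|w'(t)|\,dt\le C(w)$; moreover, because there are only finitely many such pieces, the measure $|w'(t)|\,dt$ is absolutely continuous with respect to Lebesgue measure, i.e.\ for every $\eta>0$ there is a $\delta=\delta(w,\eta)>0$ with $\int_A|w'(t)|\,dt<\eta$ for every measurable $A\subseteq[-1,1]$ of Lebesgue measure less than $\delta$. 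The key point is that $q_x$ is uniformly bounded near $x$ (where $|w'|$ has small integral over a short interval) and is uniformly $O(1/n)$ away from $x$ (where the total mass of $|w'|$ is bounded).

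Concretely, I would fix $\eps>0$, let $C(w)$ be the constant from \eqref{eq:q_bound_far2}, and choose $\delta=\delta(w,\eps)>0$ so small that $\int_A|w'(t)|\,dt<\frac{\eps}{2C(w)}$ whenever $|A|<\delta$. Then I split
\[
\int_{-1}^1 q_x(t)|w'(t)|\,dt = \int_{\{|t-x|<\delta/2\}} q_x(t)|w'(t)|\,dt + \int_{\{|t-x|\ge\delta/2\}} q_x(t)|w'(t)|\,dt .
\]
For the first term I use $q_x\le C(w)$ from \eqref{eq:q_bound_far2} and the fact that the domain of integration has measure at most $\delta$, bounding it by $C(w)\cdot\frac{\eps}{2C(w)}=\eps/2$. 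For the second term I use \eqref{eq:q_bound_far1}, which (since $\max\{1,n\sqrt{1-t^2}\}\ge 1$) gives $q_x(t)\le \frac{C(w)}{n(t-x)^2}\le \frac{4C(w)}{n\delta^2}$ throughout $\{|t-x|\ge\delta/2\}$, so this term is at most $\frac{4C(w)}{n\delta^2}\int_{-1}^1|w'(t)|\,dt\le \frac{C(w)}{n\delta^2}$. Choosing $n_0=n_0(w,\eps)$ large enough that $\frac{C(w)}{n_0\delta^2}<\eps/2$, the sum is less than $\eps$ for all $n\ge n_0$, as required.

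I do not expect a real obstacle here; the only slightly delicate point is the uniform absolute continuity of $|w'|\,dt$, and this is immediate from absolute continuity on each piece together with the finiteness of the partition. It is worth noting that the argument is insensitive to whether $x$ equals one of the discontinuity points $s_i$: the jumps of $w$ do not contribute to the pointwise derivative $w'$ and are handled separately by the $\sum_i\bigl(w(s_i+)-w(s_i-)\bigr)$ term appearing in Lemma~\ref{step2}.
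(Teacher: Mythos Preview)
Your argument is correct and essentially identical to the paper's: both split the integral at distance $\delta$ from $x$, apply \eqref{eq:q_bound_far2} on the near part together with the absolute continuity of $|w'|\,dt$, and apply \eqref{eq:q_bound_far1} on the far part together with $w'\in L_1[-1,1]$, then choose $n_0$ so that $\frac{C(w)}{n_0\delta^2}$ is small.
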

\begin{proof}
Let $\tilde{\eps}>0$. If $w$ satisfies the assumptions of
Theorem~\ref{thm:discont} then there is a
$\delta=\delta(w,\tilde{\eps})>0$ such that $\int_I\lvert
w'(t)\rvert dt<\tilde{\eps}$ for every interval $I\subseteq[-1,1]$
such that $|I|\le 2\delta$. Thus using both statements of
Proposition \ref{propos:q bound for t far from x},
\begin{align*}
&\int_{-1}^1 q_x(t)\bigl\lvert w'(t)\bigr\rvert
  dt=\int_{\{-1\leq t\leq 1:|t-x|\leq\delta\}}q_x(t)\bigl\lvert w'(t)\bigr\rvert
  dt+\int_{\{-1\leq t\leq 1: |t-x|>\delta\}}q_x(t)\bigl\lvert w'(t)\bigr\rvert
  dt\leq\\
&\leq C(w)\int_{\{-1\leq t\leq 1: |t-x|\leq\delta\}}\bigl\lvert w'(t)\bigr\rvert
  dt+\frac{C(w)}{n\delta^2}\int_{\{-1\leq t\leq 1: |t-x|>\delta\}}\bigl\lvert w'(t)\bigr\rvert
  dt\leq C(w)\tilde{\eps}+\frac{C(w)}{n\delta^2}\le C(w)\tilde{\eps}\end{align*}
when $n\ge \frac{1}{\tilde{\eps}\delta(w,\tilde{\eps})^2}$. The
stated bound follows by choosing $\tilde{\eps} = c(w)\eps$.
\end{proof}
We now prove Proposition~\ref{prop:discont}.
\begin{proof}[Proof of the lower bound in Proposition~\ref{prop:discont}]
When $x\ge \xi_n(0)$ the bound follows by taking $C(w)$ large
enough, using the fact that $\pi$ is non-decreasing by
\eqref{eq:pi_underline_pi_lambda_def} so that $\pi'(x)\ge 0$, using
Corollary \ref{cor:lambda_lower} and using Proposition~\ref{Gauss}
to see that $\frac{\lambda(x)}{1-x}\geq c(w)$.

For $x<\xi_n(0)$, combining Lemmas~\ref{deriv-shift},
\ref{lem:pre-prop3.3} and \ref{step2} yields
\begin{equation*}
  \pi'(x)\geq w(x) - C(w)\frac{\lambda(x)}{1-x} +\sum_{i=1}^L
  \left(w(s_i+)-w(s_i-)\right)\left(p_x-\chi_{[-1,x]}\right)(s_i) -
  \int_{-1}^1 q_x(t)\cdot\bigl\lvert w'(t)\bigr\rvert
  dt.
\end{equation*}
By Lemma \ref{>0}, \eqref{eq:q_as_difference} and Proposition
\ref{propos:q bound for t far from x}, for each $i$,
\begin{multline*}
  \left|w(s_i+)-w(s_i-)\right|\left(p_x-\chi_{[-1,x]}\right)(s_i)\le
  C(w)q_x(s_i)\le\\
  \le C(w)\min\left\{\frac{1}{n^2\sqrt{1-s_i^2}\,(s_i-x)^2},1\right\}\le
  C(w)\min\left\{\frac{1}{n^2(s_i-x)^2},1\right\}.
\end{multline*}
The proposition follows using Lemma \ref{one more step2}.
\end{proof}
\begin{rem}\label{rem:refined_discontinuity} As can be seen from the proof, for every $1\leq i\leq L$, the term $\min\left\{\frac{1}{n^2(s_i-x)^2},1\right\}$ may be omitted, either from the lower bound, if $w(s_i+)\geq w(s_i-)$, or from the upper bound, if $w(s_i+)\leq w(s_i-)$.
\end{rem}
\section{Discussion and open problems}\label{sec:open_problems}

The main result in our work is a differential version of the
Chebyshev-Markov-Stieltjes inequalities given by
Theorem~\ref{thm:Lipschitz}, Theorem~\ref{thm:abs_cont} and
Theorem~\ref{thm:discont}. The Chebyshev-Markov-Stieltjes
inequalities hold for every weight function (and more generally, any
measure). In what generality does a differential version of the
inequalities hold? Our results show that some version holds for all
weight functions which are absolutely continuous and bounded away
from zero, and also for a certain class of discontinuous weight
functions. To what extent are such assumptions on the weight
function necessary for the result? Do similar results hold for
Jacobi weight functions, when $w(x) = (1-x)^{\alpha}(1+x)^{\beta}$?

In addition, what are the best possible error terms in
Theorem~\ref{thm:Lipschitz}? Writing $x=\xi_i(a)$, these error terms
may be improved for certain ranges of $i$ and $a$, see
Section~\ref{sec:proof_of_lemma_pre-prop3.3}, e.g., in the proof of
Claim~\ref{cl:p_endpoints} and in estimates
\eqref{eq:p_x_1_upper_bound_a_0}, \eqref{eq:p'_1_bound} and
\eqref{eq:p_prime_x_1_upper_bound_calc}, but it is not clear what
would be the form of the sharp bounds. This question may be asked
also for weight functions satisfying the assumptions of
Theorem~\ref{thm:abs_cont} or Theorem~\ref{thm:discont}.

To make the bounds in our theorems fully effective one would require
explicit bounds for the constants $C(w)$ appearing in them. When $w$
satisfies the conditions of Theorem~\ref{thm:Lipschitz} quantitative
estimates for $C(w)$ in terms of the Lipschitz constant and minimal
value of $w$ may be obtained from our proof (including the proof of
Theorem~\ref{thm:Badkov_thm} for this case in
Appendix~\ref{sec:appendix_Badkov}, where one may obtain
quantitative estimates for $\ell_{n}$ and $f_n$ via the Korous
comparison principle and \cite[(11.3.6)]{Szego}). We do not know to
similarly bound the constants appearing in
Theorem~\ref{thm:abs_cont} and Theorem~\ref{thm:discont} by
parameters depending only on the minimal value and the regularity of
$w$. This is due to the fact that the dependence on $w$ in the
constants appearing in Theorem~\ref{thm:Badkov_thm} is non-explicit.

\section*{Acknowledgements}
We thank Vladimir Badkov, Percy Deift, Eli Levin, Doron Lubinsky,
Paul Nevai and Mikhail Sodin for helpful remarks and discussions
during the course of this work.

\appendix
\section{Appendix: Remarks on Badkov's theorem}\label{sec:appendix_Badkov}
In this section we provide some remarks on Badkov's
Theorem~\ref{thm:Badkov_thm} including a proof for the case that the
function $h$ is Lipschitz continuous and $\alpha=0$, the main case
in our proof of Theorem~\ref{thm:Lipschitz}.

Write $(\varphi_n)$ and $(\psi_n)$ for the orthogonal polynomials on
$[-1,1]$ with respect to the weight functions $w(t)$ and
$(1-t^2)w(t)$, respectively, so that in the notation of our paper,
$\varphi = \varphi_n$ and $\psi = \psi_{n-1}$. The starting point
for the theorem is a relation between $(\varphi_n), (\psi_n)$ and
orthogonal polynomials on the unit circle for a related weight
function. Define $\tilde{w}:[-\pi,\pi]\to[0,\infty)$ by
\begin{equation}\label{eq:tilde_w_def}
  \tilde{w}(\theta):=w(\cos \theta)|\sin \theta|.
\end{equation}
Let $(\phi_n)$ be the orthogonal polynomials on the unit circle with
respect to $\tilde{w}$, that is, $\deg(\phi_n)=n$ and
\begin{equation*}
  \frac{1}{2\pi}\int_{-\pi}^\pi
  \phi_n(e^{i\theta})\phi_m(e^{i\theta})\tilde{w}(\theta)d\theta =
  \delta_{n,m},
\end{equation*}
normalized to have real positive leading coefficients, which we
denote by $\ell_n$. We also let $f_n:= \phi_n(0)$ be the constant
term of $\phi_n$. The following relation, a consequence of
\cite[Theorem 11.5]{Szego}, connects the three systems of orthogonal
polynomials,
\begin{equation}\label{eq:three_orthogonal_systems}
\sqrt{\frac{2}{\pi}}e^{-in\theta}\phi_{2n}(e^{i\theta})=\sqrt{1+\frac{f_{2n}}{\ell_{2n}}}\varphi_n(\cos
\theta)+i\sqrt{1-\frac{f_{2n}}{\ell_{2n}}}\sin\theta\,\psi_{n-1}(\cos\theta),
\quad -\pi\le \theta\le \pi,\,n\ge 1.
\end{equation}
The next lemma uses this relation to show that
Theorem~\ref{thm:Badkov_thm} is equivalent to estimating
$|\phi_{2n}|$ on the unit circle.
\begin{lemma}\label{lem:unit_circle_polynomials}
  If
  \begin{equation}\label{eq:Szego_condition}
    \int_{-\pi}^\pi |\log \tilde{w}(\theta)|d\theta <\infty
  \end{equation}
  then there exist constants $C(w),c(w)>0$ such that for every $n\ge
  1$,
  \begin{equation*}
    c(w)|\phi_{2n}(e^{i\theta})|\le |\varphi_n(x)|+\sqrt{1-x^2}|\psi_{n-1}(x)|\le
    C(w)|\phi_{2n}(e^{i\theta})|, \quad x = \cos\theta,\,-\pi\le \theta\le \pi.
  \end{equation*}
\end{lemma}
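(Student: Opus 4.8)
The plan is to read the desired two-sided estimate off the identity \eqref{eq:three_orthogonal_systems} by taking squared moduli. Fix $\theta\in[-\pi,\pi]$ and set $x=\cos\theta$. Since $\tilde w$ is an even function of $\theta$ by \eqref{eq:tilde_w_def}, the orthonormal polynomials $\phi_k$ on the unit circle have real coefficients, so $f_k=\phi_k(0)$ is real, while $\varphi_n(\cos\theta)$, $\psi_{n-1}(\cos\theta)$ and $\sin\theta$ are all real. Hence the right-hand side of \eqref{eq:three_orthogonal_systems} splits into its real part $\sqrt{1+f_{2n}/\ell_{2n}}\,\varphi_n(x)$ and its imaginary part $\sqrt{1-f_{2n}/\ell_{2n}}\,\sin\theta\,\psi_{n-1}(x)$, while the left-hand side has modulus $\sqrt{2/\pi}\,\lvert\phi_{2n}(e^{i\theta})\rvert$. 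Equating squared moduli and using $\sin^2\theta=1-x^2$ gives
\[
  \tfrac{2}{\pi}\,\lvert\phi_{2n}(e^{i\theta})\rvert^2=\Bigl(1+\tfrac{f_{2n}}{\ell_{2n}}\Bigr)\varphi_n(x)^2+\Bigl(1-\tfrac{f_{2n}}{\ell_{2n}}\Bigr)(1-x^2)\,\psi_{n-1}(x)^2.
\]

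The crux is to bound the coefficients $1\pm f_{2n}/\ell_{2n}$ from above and below by positive constants depending only on $w$, uniformly in $n$. The upper bound $1\pm f_{2n}/\ell_{2n}\le 2$ is immediate from $\lvert f_{2n}/\ell_{2n}\rvert<1$, which is implicit in the square roots appearing in \eqref{eq:three_orthogonal_systems}. For the lower bound I would invoke the classical theory of orthogonal polynomials on the unit circle \cite[\S11]{Szego}: writing $\delta_k:=f_k/\ell_k$, the Szeg\H o recursion yields $\ell_{k-1}^2/\ell_k^2=1-\delta_k^2$ for every $k\ge1$, so that $\prod_{k=1}^n(1-\delta_k^2)=\ell_0^2/\ell_n^2$. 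Here $\ell_0=\bigl(\tfrac{1}{2\pi}\int_{-\pi}^{\pi}\tilde w(\theta)\,d\theta\bigr)^{-1/2}=\bigl(\tfrac1\pi\int_{-1}^1 w(t)\,dt\bigr)^{-1/2}$ is a finite positive constant, and the recursion shows $\ell_n$ is non-decreasing; by Szeg\H o's theorem the hypothesis \eqref{eq:Szego_condition} forces $\ell_n$ to remain bounded, equivalently $\prod_{k=1}^\infty(1-\delta_k^2)=:c_0(w)>0$. Since the complementary factors $\prod_{j\neq k}(1-\delta_j^2)$ are at most $1$, we get $1-\delta_k^2\ge c_0(w)$, hence $\lvert\delta_k\rvert\le\sqrt{1-c_0(w)}$, for every $k$; in particular $1\pm f_{2n}/\ell_{2n}\ge 1-\sqrt{1-c_0(w)}>0$ for all $n$.

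Substituting these bounds into the displayed identity shows that $\tfrac2\pi\lvert\phi_{2n}(e^{i\theta})\rvert^2$ is comparable, up to constants depending only on $w$, to $\varphi_n(x)^2+(1-x^2)\psi_{n-1}(x)^2$; applying the elementary inequalities $a^2+b^2\le(a+b)^2\le2(a^2+b^2)$ with $a=\lvert\varphi_n(x)\rvert$ and $b=\sqrt{1-x^2}\,\lvert\psi_{n-1}(x)\rvert$ then shows it is likewise comparable to $\bigl(\lvert\varphi_n(x)\rvert+\sqrt{1-x^2}\,\lvert\psi_{n-1}(x)\rvert\bigr)^2$. Taking square roots yields the lemma with suitable $C(w),c(w)>0$. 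The only ingredient that is not bookkeeping with \eqref{eq:three_orthogonal_systems} is the uniform separation of $f_{2n}/\ell_{2n}$ from $\pm1$, and this is exactly where the Szeg\H o condition \eqref{eq:Szego_condition} enters; I expect this to be the step requiring the most care, since without a hypothesis guaranteeing boundedness of the leading coefficients $\ell_n$ the comparison could degenerate for $\theta$ near $0$ or $\pi$.
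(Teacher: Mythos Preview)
Your proof is correct and follows essentially the same route as the paper: take moduli in \eqref{eq:three_orthogonal_systems} and show that the coefficients $1\pm f_{2n}/\ell_{2n}$ are bounded between positive constants. The paper argues this last point by citing Szeg\H o's results that $\ell_n$ tends to a positive limit and $f_n\to 0$ (hence $f_{2n}/\ell_{2n}\to 0$ and, since $|f_{2n}/\ell_{2n}|<1$ always, the supremum is strictly less than $1$); your version via the infinite product $\prod_k(1-\delta_k^2)=\ell_0^2/\ell^2>0$ is just an equivalent reformulation of the same fact, with the pleasant feature that it yields the explicit bound $1\pm\delta_k\ge 1-\sqrt{1-c_0(w)}$.
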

\begin{proof}
Equation 11.3.12 in \cite{Szego} implies that $\ell_n$ tends to a
  positive limit, whence equation 11.3.6 in \cite{Szego} implies
  that $f_n$ tends to zero. The lemma follows from these facts using equation
  \eqref{eq:three_orthogonal_systems}.
\end{proof}
Theorems 1.2 and 1.4 of Badkov \cite{Badkov} give two-sided
estimates on $|\phi_n|$ on the unit circle under rather general
assumptions on $\tilde{w}$ which include the assumptions of
Theorem~\ref{thm:Badkov_thm}. When the weight $w$ is assumed to be
Lipschitz continuous such estimates may also be derived by means of
the Korous comparison theorem. We proceed to describe this
derivation for the case $\alpha=0$ which we are interested in (see
also \cite{Badkov_older}) and comment briefly on the more general
case at the end of the appendix.

Assume now that $w$ satisfies the assumptions of
Theorem~\ref{thm:Lipschitz}. The proof will follow by a comparison
argument. Denote by $u$ the constant weight function, $u\equiv 1$ on
$[-1,1]$. Let $(L_n)$ be the Legendre polynomials, defined in
\eqref{eq:Legendre_poly_def}, orthogonal with respect to $u$. The
polynomials $(L'_n)$ are orthogonal with respect to the weight
$1-t^2$ and we have the normalizations \cite[(4.21.7),
(4.3.3)]{Szego}
$$\int_{-1}^1 L_n^2(t)dt=\frac{1}{n+\frac{1}{2}}\quad\text{and}\quad\int_{-1}^1 {L'_n}^2(t)(1-t^2)dt=\frac{n(n+1)}{n+\frac{1}{2}}.$$
Define also the orthonormal versions,
$$\overline{L_n}=\sqrt{n+\frac{1}{2}}L_n\quad\text{and}\quad\overline{L'_n}=\sqrt{\frac{n+\frac{1}{2}}{n(n+1)}}L'_n.$$
\begin{lemma}\label{lem:Legendre_poly_bounds}
  There exist absolute constants $C,c>0$ such that for every $-1<x<1$ and $n\ge 1$,
  \begin{equation}\label{ineq:Legendre}
    c\min\left\{n,\frac{1}{\sqrt{1-x^2}}\right\}^{1/2}\leq|\overline{L_n}(x)|+\sqrt{1-x^2}|\overline{L'_n}(x)|\leq C\min\left\{n,\frac{1}{\sqrt{1-x^2}}\right\}^{1/2}.
  \end{equation}
\end{lemma}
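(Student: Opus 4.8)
The plan is to deduce \eqref{ineq:Legendre} from classical, purely one-weight facts about the Legendre polynomials $L_n$ (which by \eqref{eq:Legendre_poly_def} satisfy $L_n(1)=1$), transported through the explicit normalisations $\overline{L_n}=\sqrt{n+\tfrac12}\,L_n$ and $\overline{L'_n}=\sqrt{(n+\tfrac12)/(n(n+1))}\,L'_n$; note that \eqref{ineq:Legendre} is the $\alpha=0$, $h\equiv1$ instance of Theorem~\ref{thm:Badkov_thm}, so the argument here must be self-contained. Since $L_n(-x)=(-1)^nL_n(x)$ it suffices to prove the bounds for $0\le x<1$, and I will write $x=\cos\theta$ with $0<\theta\le\pi/2$, so that $\sqrt{1-x^2}=\sin\theta$ and $\frac{d}{d\theta}L_n(\cos\theta)=-\sin\theta\,L'_n(\cos\theta)$. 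I would split $[0,1)$ into an endpoint region $\{1-x^2<\tfrac1{4n^2}\}$ and a bulk region $\{1-x^2\ge\tfrac1{4n^2}\}$ (equivalently $\sin\theta\ge\tfrac1{2n}$); in the first $\min\{n,(1-x^2)^{-1/2}\}^{1/2}\asymp\sqrt n$, and in the second $\min\{n,(1-x^2)^{-1/2}\}^{1/2}\asymp(1-x^2)^{-1/4}=(\sin\theta)^{-1/2}$.

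The endpoint region is elementary. The upper bound uses only $|L_n|\le1$ on $[-1,1]$ and Markov's inequality $|L'_n(x)|\le n^2\max_{[-1,1]}|L_n|=n^2$ (the inequality \eqref{eq:Bernstein_Markov_inequality} with $\alpha=1$): these give $|\overline{L_n}(x)|\le\sqrt{n+\tfrac12}\le C\sqrt n$ and $\sqrt{1-x^2}\,|\overline{L'_n}(x)|\le C n^{-1/2}\sqrt{1-x}\,n^2\le C\sqrt n$ once $1-x\le 1-x^2<\tfrac1{4n^2}$. For the matching lower bound, $L_n(1)=1$ and the mean value theorem give $L_n(x)\ge1-(1-x)n^2\ge\tfrac34$ on this region, whence $|\overline{L_n}(x)|\ge\tfrac34\sqrt{n+\tfrac12}\ge c\sqrt n$.

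For the bulk region I would run a Sturm--Liouville (Sonine) energy argument on the Legendre equation. Setting $y(\theta):=L_n(\cos\theta)\sqrt{\sin\theta}$ puts it in Liouville normal form $y''+\omega^2y=0$ with $\omega(\theta)^2=(n+\tfrac12)^2+\tfrac14\csc^2\theta$, and $E(\theta):=y'(\theta)^2+\omega(\theta)^2y(\theta)^2$ satisfies $E'=(\omega^2)'y^2$. On $(0,\pi/2)$ one has $(\omega^2)'<0$, so $E$ is non-increasing and $(\log E)'\ge(\log\omega^2)'$ (using $\omega^2y^2\le E$); integrating over $[\theta,\pi/2]$ yields $E(\pi/2)\le E(\theta)\le E(\pi/2)\,\omega^2(\theta)/\omega^2(\pi/2)$. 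Throughout the bulk range $\omega^2(\theta)\asymp n^2$ (both summands are $\asymp n^2$ there, since $\csc\theta\le\csc(\arcsin\tfrac1{2n})=2n$), so $E(\theta)\asymp E(\pi/2)$; and $E(\pi/2)=L'_n(0)^2+\omega^2(\tfrac\pi2)L_n(0)^2\asymp n$ by the classical values $|L_n(0)|\asymp n^{-1/2}$ for even $n$ and $|L'_n(0)|\asymp n^{1/2}$ for odd $n$. Hence $E(\theta)\asymp n$ on the whole bulk region. Translating back via $y=L_n(\cos\theta)\sqrt{\sin\theta}$ and $\sin^{3/2}\theta\,L'_n(\cos\theta)=-y'+\tfrac12\cot\theta\,y$, the normalisation constants, and the uniform bound $\cot\theta\le\csc\theta\le2n$: the estimate $E\le Cn$ gives $|\overline{L_n}(x)|,\ \sqrt{1-x^2}\,|\overline{L'_n}(x)|\le C(\sin\theta)^{-1/2}$, while $E\ge cn$, via a short case analysis (whether $\omega^2y^2$ or $y'^2$ carries a definite fraction of the energy, and in the latter case whether $|y'|$ dominates $|\cot\theta\,y|$), gives $|\overline{L_n}(x)|+\sqrt{1-x^2}\,|\overline{L'_n}(x)|\ge c(\sin\theta)^{-1/2}$. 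Combined with the endpoint region this proves \eqref{ineq:Legendre}. As an alternative, the bulk estimates can be read off from the classical Laplace--Heine asymptotics of $L_n$ away from $\pm1$ together with the Mehler--Heine (Bessel) asymptotics near $\pm1$; see \cite{Szego}.

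I expect the main difficulty to be the bulk-region lower bound: it asserts precisely that the oscillating quantity $|\overline{L_n}|+\sqrt{1-x^2}\,|\overline{L'_n}|$ never drops below its generic size, i.e.\ that $L_n$ and an appropriate multiple of $L'_n$ are not simultaneously small. This is exactly what the conservation law $E'=(\omega^2)'y^2$ and the slow variation of $\omega$ on $\{\sin\theta\ge\tfrac1{2n}\}$ are designed to capture; the remaining effort is the bookkeeping of pushing these ODE bounds through the two distinct normalisations of $\overline{L_n}$ and $\overline{L'_n}$ and the change of variables $x=\cos\theta$, near the left end of the bulk range $\theta\asymp1/n$ (where the $\tfrac14\csc^2\theta$ term in $\omega^2$ is of the same order as $(n+\tfrac12)^2$).
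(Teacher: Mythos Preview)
Your argument is correct and self-contained, but it differs substantially from the paper's proof. The paper also splits into the quantity $F_n(x):=n(n+1)L_n^2(x)+(1-x^2)L_n'^2(x)$, which is essentially an unnormalized version of your energy $E$ pulled back to the $x$-variable, but then handles the two regions in the opposite way: for the bulk lower bound it simply cites the Hilb-type asymptotics \cite[Theorem 8.21.13]{Szego}, and it is the \emph{endpoint} region where the paper invokes the differential equation, using the monotonicity of $F_n$ on $[0,1]$ from \cite[(7.3.4)]{Szego} to push the bulk bound out to $\pm1$. Your Sonine-type energy argument (monotonicity of $E$ and of $E/\omega^2$) replaces the asymptotic black box entirely, anchoring instead on the explicit values $L_n(0),L_n'(0)$; the endpoint region you then dispatch by the elementary $L_n(1)=1$ plus Markov. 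What you gain is that nothing beyond the Liouville normal form and Stirling is quoted; what the paper gains is brevity. One small slip in your write-up: in justifying $\omega^2(\theta)\asymp n^2$ on the bulk range you say ``both summands are $\asymp n^2$ there'', but in fact $\tfrac14\csc^2\theta$ ranges from $\tfrac14$ up to $n^2$; the conclusion is still correct since the first summand $(n+\tfrac12)^2$ alone already gives both the lower bound and (together with $\csc\theta\le 2n$) the upper bound.
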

\begin{proof}
  The upper bound follows by standard estimates of Jacobi
  polynomials \cite[Theorem 7.32.2]{Szego}. For the lower bound, define $F_n(x):=n(n+1)L_n^2(x)+(1-x^2)L_n'^2(x)$. It suffices to
  show that
  \begin{equation}\label{eq:F_n_lower_bound}
    F_n(x) \ge c\,
    n\min\left\{\frac{1}{\sqrt{1-x^2}},n\right\}.
  \end{equation}
  This estimate holds by \cite[Theorem 8.21.13]{Szego} when $n\ge
  n_0$ and $1-x^2 \ge \frac{c'}{n^2}$, for some $n_0, c'>0$. It
  holds trivially for $n<n_0$, adjusting the constant $c$ as
  necessary, since $L_n$ has no double root. Finally,
  \eqref{eq:F_n_lower_bound} follows also when $n\ge n_0$ and
  $1-x^2\le \frac{c'}{n^2}$ by observing that the differential equation for the Legendre polynomials implies that $F_n$ is monotone decreasing on [-1,0] and monotone increasing on
  [0,1], see \cite[(7.3.4)]{Szego}.
\end{proof}
Since $w$ satisfies the assumptions of  Theorem~\ref{thm:Lipschitz} we may apply the Korous comparison theorem \cite[Theorem
7.1.3]{Szego} to obtain
\begin{align}
|\varphi_n(x)|&\leq
C(w)\left(|\overline{L_{n-1}}(x)|+|\overline{L_n}(x)\right),&|\psi_{n-1}(x)|\leq
C(w)\big(|\overline{L'_{n-1}}(x)|+|\overline{L'_n}(x)|\big),\label{ineq:varphi_by_L}\\
|\overline{L_n}(x)|&\leq
C(w)\left(|\varphi_{n-1}(x)|+|\varphi_n(x)|\right),&|\overline{L'_n}(x)|\leq
C(w)\left(|\psi_{n-2}(x)|+|\psi_{n-1}(x)|\right).\label{ineq:psi_by_L}
\end{align}
The upper bound in \eqref{eq:Badkov_bound} (for $\alpha=0$) now
follows by combining the inequalities in \eqref{ineq:varphi_by_L}
and using Lemma~\ref{lem:Legendre_poly_bounds}. To obtain the lower
bound note first that by \cite[(11.4.6)]{Szego} we have
\begin{equation}|\phi_n(z)|=\frac{|\ell_{n+1}\phi_{n+1}(z)-f_{n+1}z^{n+1}\overline{\phi_{n+1}(z)}|}{|\ell_n z|}\le C(w)|\phi_{n+1}(z)|,\quad |z|=1,\, n\ge
0\label{ineq:phi_n_by_phi_n+1},
\end{equation}
where we have used that $\ell_n\to\ell>0$ and $f_n\to 0$ as in the
proof of Lemma~\ref{lem:unit_circle_polynomials}. Finally, the lower
bound follows by combining the inequalities in
\eqref{ineq:psi_by_L}, using Lemma~\ref{lem:Legendre_poly_bounds},
applying Lemma~\ref{lem:unit_circle_polynomials} and using
\eqref{ineq:phi_n_by_phi_n+1} twice.

We finish by briefly remarking on the method used in Badkov's paper
\cite{Badkov} from which the general case of
Theorem~\ref{thm:Badkov_thm} follows. Badkov begins by upper
bounding $|\phi_n|$ on the unit circle via the so-called Szeg\H o
function $\pi$ associated with the weight $\tilde{w}$. This bound is
up to an error $(1 + \delta_n \sqrt{n})$ for a related quantity
$\delta_n$ \cite[Lemma 4.2]{Badkov} (see also \cite[Theorem 3.6 and
Theorem 4.10]{Geronimus_book}). The advantage of such a bound is
that both the Szeg\H o function and the quantity $\delta_n$ are
multiplicative in the weight function $\tilde{w}$ \cite[Lemma
4.1]{Badkov}, thus allowing one to bound them separately for the
factors $|\sin \theta|$ and $w(\cos \theta)$ present in
\eqref{eq:tilde_w_def}. This task is undertaken in \cite[Theorem 2.1
and Theorem 4.1]{Badkov}. Condition~\eqref{eq:Badkov's condition} is
used for estimating $\delta_n$ for the factor $w(\cos \theta)$ via
results in \cite[Section 3.7]{Geronimus_book}. This provides the
required upper bound on $|\phi_n|$. To obtain the lower bound, the
function $|\phi_n' \phi_n|$ is estimated from below by the
Christoffel-Darboux kernel \cite[Lemma 11.1]{Badkov}. This kernel is
then estimated from below \cite[Theorem 9.2]{Badkov} and an upper
bound for $|\phi'_n|$ is derived from the upper bound for
$|\phi_n|$.

\end{document}